\let\e=\varepsilon
\let\p=\psi
\let\de=\delta
\def\p{\partial}
\newcommand{\beq}{\begin{equation}}
\newcommand{\eeq}{\end{equation}}
\newcommand{\ben}{\begin{eqnarray}}
\newcommand{\een}{\end{eqnarray}}
\newcommand{\beno}{\begin{eqnarray*}}
\newcommand{\eeno}{\end{eqnarray*}}
\renewcommand{\theequation}{\thesection.\arabic{equation}}
\newtheorem{theorem}{Theorem}[section]
\newtheorem{lemma}[theorem]{Lemma}
\newtheorem{proposition}[theorem]{Proposition}
\newtheorem{Theorem}{Theorem}[section]
\newtheorem{Proposition}[Theorem]{Proposition}
\newtheorem{Lemma}[Theorem]{Lemma}
\newtheorem{Corollary}[Theorem]{Corollary}
\newtheorem{Remark}[Theorem]{Remark}
\newcommand{\ude}{u^\delta}
\begin{document}
\title[inviscid limit in unit]
{Prandtl-Batchelor flows on a disk}

%
\author{Mingwen Fei}
\address{School of  Mathematics and Statistics, Anhui Normal University, Wuhu 241002, China}
\email{mwfei@ahnu.edu.cn}

\author{Chen Gao}
\address{Beijing International Center for Mathematical Research, Peking University, Beijing,100871, China}
\email{gaochen@amss.ac.cn}

\author{Zhiwu Lin}
\address{School of Mathematics, Georgia Institute of Technology, 30332, Atlanta, GA, USA}
\email{zlin@math.gatech.edu}

\author{Tao Tao}
\address{School of  Mathematics, Shandong University, jinan, Shandong, 250100, China}
\email{taotao@sdu.edu.cn}

\date{\today}
\maketitle

\renewcommand{\theequation}{\thesection.\arabic{equation}}
\setcounter{equation}{0}
\begin{abstract}
For steady two-dimensional flows with a single eddy (i.e. nested closed
streamlines), Prandtl (1905) and Batchelor (1956) proposed that in the limit
of vanishing viscosity the vorticity is constant in an inner region separated
from the boundary layer. In this paper, by constructing higher order approximate solutions of the
Navier-Stokes equations and establishing the validity of Prandtl boundary layer expansion, we give a rigorous proof of the
existence of Prandtl-Batchelor flows on a disk with the wall velocity slightly
different from the rigid-rotation. The leading order term of the flow is the
constant vorticity solution (i.e. rigid rotation) satisfying Batchelor-Wood
formula.
\end{abstract}

\numberwithin{equation}{section}

\indent

\section{Introduction}

\indent

In his famous paper \cite{prandtl} on the birth of boundary layer theory,
Prandtl (1904) noted that in the limit of infinite Reynolds number, the
vorticity of steady two-dimensional laminar flows becomes constant within a
region of nested closed streamlines (i.e. a single eddy). The same property
was later rediscovered by Batchelor (1956) in \cite{B}. See also the
conference abstract \cite{feymann-lagerstrom} of Feynman and Lagerstrom
(1956). This class of result is now usually referred to as Prandtl-Batchelor
theory and such laminar flows are called Prandtl-Batchelor (PB) flows in the
literature. For PB flows on a circular disk, the formula of the limiting
vorticity constant was given in (\cite{B,feymann-lagerstrom,W})
and is usually referred to as the Batchelor-Wood formula. For PB flows on more
general domains, it is more difficult to determine the limiting vorticity
constant and partial results were given in (\cite{edwards,feymann-lagerstrom, kim-formula,riley81,van wijngarden,W}). The
Prandtl-Batchelor theory plays an important role in many studies involving
laminar flows with small viscosity, for example, the nonlinear critical
layer theory near shear flows in \cite{maslowe}. It also implies a selection
mechanism of the Navier-Stokes equations in the inviscid limit. Although the
Euler equations have infinitely many steady solutions, only those Euler
solutions satisfying the PB theory can appear in the inviscid limit. That is,
the vorticity of the compatible Euler solution must be a constant within each eddy.

Moreover, the Prandtl-Batchelor theory can be applied to general 2D advection
diffusion equation of a passive scalar field $\theta\left(  x,y\right)  $
\[
u\cdot\nabla\theta-R^{-1}\Delta\theta=0,
\]
where $u\left(  x,y\right)  $ is a given steady incompressible flow, and
$R^{-1}$ is the diffusion coefficient. Then Prandtl-Batchelor theory implies
that when $R^{-1}$ tends to zero, $\theta$ should tend to a constant within
any single eddy associated with the flow $u$. The first example is the
homogenization of potential vorticity in the ocean circulation theory
(\cite{ocean-pedlosky,rhines-young83,rhines-young82}), where
$\theta$ is the potential vorticity in the 2D quasi-geostrophic model and $R$
is the Peclet number. Another example is the flux expulsion in the
self-excited dynamo theory (\cite{dormy-moffatt-19,moffatt78,weiss66}), where $\theta$ is the magnetic potential and $R$ is
the magnetic Reynolds number. Then PB theory implies that within each eddy of
the flow $u$, the magnetic field (i.e. $\nabla\theta$) should become zero when
$R^{-1}$ tends to zero.

Despite the importance of PB theory and its wide applications to fluids, there
is relatively little mathematical work on this problem. In the existing
``proof" of PB theory stemming from \cite{B,prandtl}, the eddy structure
of the laminar flow is a priori assumed. In the Appendix, we give such a proof
for the case of a single eddy. It is based on the assumptions that: for
sufficiently small viscosity i) the steady flow of Navier-Stokes has a single
eddy (i.e. no hyperbolic stagnation point of the stream function); ii) any
interior domain is separated from the boundary layer uniformly for vanishing viscosity; iii) inside
the interior domain, steady NS solutions tend to a steady Euler solution in
$C^{2}$. However, given a domain and the boundary condition, it is difficult
to understand the eddy structures (i.e. streamline structures) of the steady
solutions of NS equations and control the boundary layer. The above assumptions
were never verified in any case rigorously. In a series of works
(\cite{K1998,K2000,K2003,kim-childress,kim-free boundary,kim-thesis,kim-formula}), Kim initiated a mathematical study of PB flows on a
disk. In particular, when the boundary velocity is slightly different from a
constant, the well-posedness of the Prandtl equation under the Batchelor-Wood
condition was shown in \cite{K2000} and some asymptotic study of the boundary
layer expansion was given in \cite{K1998}. However, it remains difficult to
show the convergence of the boundary layer expansion to the steady
Navier-Stokes solution.

In this paper, we give the first proof of the existence of PB flow on a disk.
As in \cite{K2000}, we assume the boundary condition to be slightly different
from a constant rotation. We first construct steady solutions of Euler equations
(leading order term of the Navier-Stokes equations) to be the constant vorticity solution (i.e. rigid rotation)
satisfying the Batchelor-Wood formula. Then, by constructing higher order approximate solutions and establishing the convergence of Prandtl boundary layer expansion, we construct a class of Prandtl-Batchelor flows to the steady Navier-Stokes equations on a disk.
More precisely, we consider the steady Navier-Stokes equations in two dimensional disk $B_1(0)$
\begin{eqnarray}
\left \{
\begin {array}{ll}
\mathbf{u}^\varepsilon\cdot\nabla\mathbf{u}^\varepsilon+\nabla p^\varepsilon-\varepsilon^2\Delta\mathbf{u}^\varepsilon=0,\\[5pt]
\nabla\cdot \mathbf{u}^\varepsilon=0,
\end{array}
\right.\label{ns}
\end{eqnarray}
with rotating boundary
\begin{align}\label{rotating boundary condition}
\mathbf{u}^\varepsilon\big|_{\partial B_1}=(\alpha+\eta f(\theta))\mathbf{t},
\end{align}
%
where  $\varepsilon>0$ is reciprocal to Reynolds number,  $\mathbf{u}^\varepsilon$ is the velocity, $p^\varepsilon$ is the pressure,  $\alpha>0$, $\eta$ is a  small number, $\mathbf{t}$ is the unit tangential vector to $\partial B_1$, and $f(\theta)$ is a $2\pi$-periodic smooth function. Let $u^\varepsilon(\theta,r)$ be the tangential component and $v^\varepsilon(\theta,r)$ be the normal component of $\mathbf{u}^\varepsilon$ in polar coordinates,
then  (\ref{ns}) reads
\begin{eqnarray}
\left \{
\begin {array}{lll}
u^\varepsilon u^\varepsilon_\theta+rv^\varepsilon u^\varepsilon_r+u^\varepsilon v^\varepsilon+p^\varepsilon_\theta-\varepsilon^2\big(ru^\varepsilon_{rr}
 + u^\varepsilon_{r}+\frac{u^\varepsilon_{\theta\theta}}{r}+\frac{2}{r} v^\varepsilon_\theta-\frac{ u^\varepsilon}{r}\big)=0,\\[5pt]
u^\varepsilon v^\varepsilon_\theta+rv^\varepsilon v^\varepsilon_r-(u^\varepsilon)^2+rp^\varepsilon_r-\varepsilon^2\big(rv^\varepsilon_{rr}
 +v^\varepsilon_{r}+\frac{ v^\varepsilon_{\theta\theta}}{r}-\frac{2}{r} u^\varepsilon_\theta-\frac{ v^\varepsilon}{r}\big)=0,\\[5pt]
 u^\varepsilon_\theta+rv^\varepsilon_r+ v^\varepsilon=0,\\[5pt]
 u^\varepsilon(\theta,1)=\alpha+\eta f(\theta),~~ v^\varepsilon(\theta,1)=0,\label{NS-curvilnear}
\end{array}
\right.
\end{eqnarray}
where $(\theta,r)\in\Omega:=[0,2\pi]\times (0,1]$.
Formally, as $\varepsilon\rightarrow 0$,
we obtain the steady Euler equations
\begin{align}\label{equ:Euler}
\left\{
\begin{aligned}
& u_e\partial_\theta u_e+v_er\partial_ru_e+u_ev_e+\partial_\theta p_e=0,\\
&u_e\partial_\theta v_e+v_er\partial_rv_e-(u_e)^2+r\partial_rp_e =0,\\
&\partial_\theta u_e+\partial_r(rv_e)=0.
\end{aligned}
\right.
\end{align}

We will show the existence of solution $(u^\varepsilon, v^\varepsilon)$ to (\ref{NS-curvilnear}) which converges to  a solution of steady Euler equations (\ref{equ:Euler}) with constant vorticity as $\varepsilon\rightarrow 0$. We first choose a steady Euler flow with constant vorticity (rigid rotation) which satisfies the Batchelor-Wood formula, then construct a solution $(u^\varepsilon, v^\varepsilon)$ to (\ref{NS-curvilnear}) by perturbing this steady Euler flow.
However, in general, there is a mismatch between the tangential velocities of the Euler flow $u_e$  and the prescribed Navier-Stokes flows $u^\varepsilon$. Due to the mismatch on the boundary, Prandtl in 1904 formally introduced the boundary layer theory to correct this mismatch, but the justification of this formal boundary expansion is a challenging problem. A first step for the justification of this formal boundary expansion  is to understand the approximate terms in the formal expansion, that is the steady Prandtl equations and linearized steady Prandtl equations, see \cite{DM, GI3,OS, renardy, SWZ, WZ}. For the validity of Prandtl boundary layer expansion, there are some important results in recent years. For the moving boundary, Guo and Nguyen made the first important progress in \cite{GN} where they considered the Navier-Stokes equations over a moving plate. Then Iyer in \cite{Iyer1} extended Guo-Nguyen's result to the case of  a rotating disk by considering the curvature effect. The leading order of Euler flows in \cite{GN} and \cite{Iyer1} are both shear flows, and the width of the region in \cite{GN} and the angle of sector in \cite{Iyer1} are small. Later, Iyer in \cite{Iyer2} justified the global steady Prandtl expansions over a moving plane under the assumption of the smallness of the mismatch, and considered the situation that Euler flow is a perturbation of shear flow in \cite{Iyer3}. For the no-slip boundary, there are also some important works. In \cite{GI1}, Guo and Iyer justified the validity of 2d steady Prandtl layer expansion in a narrow region, and later they extended their result in  \cite{GI2} where they considered the 2d steady Navier-Stokes equations with external force. Gao and Zhang in \cite{GZ} justified the validity of 2d steady Prandtl layer expansion by introducing the stream function in the error estimates and removed the smallness of the width for shear flow. Then, Iyer and Masmoudi in \cite{IM1,IM2} justified the global-in-$x$ steady Prandtl boundary layer expansion. Very recently, Gao and Zhang in \cite{GZ2} justified the Prandtl expansion under the situation of non-shear Euler flow when the width of the region is small. Moreover,  the stability in Sobolev space for some class of shear flows of Prandtl type has been studied by Gerard-Varet and Maekawa in \cite{GMM}, see also \cite{CWZ}.

In the current work, we assume that $\int_0^{2\pi}f(\theta)d\theta=0$ and the leading order of Euler flow $(u_e(\theta,r),v_e(\theta,r))$ is the Couette flow
$$(u_e,v_e)=(ar, 0).$$
There is an important basis for this choice---Prandtl-Batchelor theory in \cite{B}(also see Appendix C). This theory shows that if the Euler flow $(u_e(\theta,r),v_e(\theta,r))$ in the disk $B_1(0)$ is the vanishing viscosity limit of Navier-Stokes flow whose streamlines are closed, then it must be the Couette flow
$$(u_e(\theta,r),v_e(\theta,r))=(ar, 0),$$
where $a$ is a constant. While the Batchelor-Wood formula in \cite{W} shows
\begin{align*}
u_{e}^2(1)=\alpha^2+\frac{\eta^2}{2\pi}\int_0^{2\pi}f^2(\theta)d\theta.
\end{align*}
 So we take the leading order of Euler flow $(u_e(\theta,r),v_e(\theta,r))$ as follows
\begin{align}\label{Taylor-Couette flow}
u_e(\theta,r)=u_{e}(r):=ar, \ v_e(\theta,r)=0,
\end{align}
where
\begin{align*}
a=\Big(\alpha^2+\frac{\eta^2}{2\pi}\int_0^{2\pi}f^2(\theta)d\theta\Big)^{\frac12}.
\end{align*}
Then, we introduce the steady Prandtl equations near $r=1$
\begin{align}
\left \{
\begin {array}{ll}
\big(u_e(1)+u_p^{(0)}\big)\partial_\theta u_p^{(0)}+\big(v_p^{(1)}-v_p^{(1)}(\theta,0)\big)\partial_Yu_p^{(0)}-\partial_{YY}u_p^{(0)}=0,\\[5pt]
\partial_\theta u_p^{(0)}+\partial_Yv_p^{(1)}=0,\\[5pt]
u_p^{(0)}(\theta,Y)=u_p^{(0)}(\theta+2\pi,Y),\quad v_p^{(1)}(\theta,Y)=v_p^{(1)}(\theta+2\pi,Y),\\[5pt]
u_p^{(0)}\big|_{Y=0}=\alpha+\eta f(\theta)-u_e(1),\quad \lim_{Y\rightarrow -\infty}(u_p^{(0)},v_p^{(1)})=(0,0).
\label{prandtl problem near 1}
\end{array}
\right.
\end{align}

The above steady Prandtl equations will be derived by matched asymptotic expansion and the solvability will be studied in next section.

Now our main theorem is stated as follows.

\begin{Theorem}\label{main theorem}
Assume that $f(\theta)$ is a $2\pi$-periodic smooth function which satisfies
$\int_0^{2\pi}f(\theta)d\theta=0$, then there exist $\varepsilon_0>0, \eta_0>0$ such that for any $\varepsilon\in (0,\varepsilon_0), \eta\in (0,\eta_0)$, the Navier-Stokes equations (\ref{NS-curvilnear}) have a solution $(u^\varepsilon(\theta,r), v^\varepsilon(\theta,r))$ which satisfies
\beno
\Big\|u^\varepsilon(\theta,r)-u_e(r)-u_p^{(0)}\Big(\theta,\frac{r-1}{\varepsilon}\Big)
\Big\|_{L^\infty(\Omega)}\leq C\varepsilon, \\
\|v^\varepsilon\|_{L^\infty(\Omega)}\leq C\varepsilon,
\eeno
where $(u_e(r),0)$ is the Couette flow in (\ref{Taylor-Couette flow}), and $u_p^{(0)}$ is the solution of steady Prandtl equations in  (\ref{prandtl problem near 1}).

Moreover, for any $r<1$, there holds
\begin{align}
\lim_{\varepsilon\rightarrow 0}\|w^\varepsilon-2a\|_{L^\infty(B_r(0))}=0, \nonumber
\end{align}
where $w^\varepsilon(\theta,r)$ is the vorticity of $(u^\varepsilon(\theta,r), v^\varepsilon(\theta,r))$.
\end{Theorem}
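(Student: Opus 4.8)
The plan is to prove Theorem~\ref{main theorem} in three stages: (i) build a high-order approximate solution of (\ref{NS-curvilnear}) by a matched asymptotic expansion coupling an interior Euler expansion with a Prandtl boundary-layer expansion near $r=1$; (ii) solve the error equation by establishing an $\varepsilon$-uniform a priori estimate for the Navier--Stokes operator linearized around the approximate solution and running a contraction argument; (iii) extract the interior vorticity limit from the sup-norm bounds.

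\emph{Stage (i).} Posit
\[
u^\varepsilon\sim\sum_{k\ge0}\varepsilon^k\big(u_e^{(k)}(\theta,r)+u_p^{(k)}(\theta,Y)\big),\qquad
v^\varepsilon\sim\sum_{k\ge0}\varepsilon^k\big(v_e^{(k)}(\theta,r)+\varepsilon v_p^{(k+1)}(\theta,Y)\big),\quad Y=\tfrac{r-1}{\varepsilon},
\]
with leading interior term the Couette flow $(u_e^{(0)},v_e^{(0)})=(ar,0)$ from (\ref{Taylor-Couette flow}) and leading layer term $(u_p^{(0)},v_p^{(1)})$ the solution of (\ref{prandtl problem near 1}), whose solvability is supplied in the next section. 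Inserting this into (\ref{NS-curvilnear}), expanding the curvature factor $1/r=1/(1+\varepsilon Y)$ inside the layer, and matching orders produces at each level $k\ge1$ a linear steady Euler system for $(u_e^{(k)},v_e^{(k)})$ and a linearized Prandtl system for $(u_p^{(k)},v_p^{(k+1)})$, each driven by lower-order data and with $u_p^{(k)},v_p^{(k+1)}\to0$ as $Y\to-\infty$. The hypothesis $\int_0^{2\pi}f\,d\theta=0$ together with the Batchelor--Wood value of $a$ ensures the compatibility conditions needed to solve the Euler correctors, while $u_e(1)+u_p^{(0)}=a+u_p^{(0)}>0$ for $\eta$ small gives well-posedness of the linearized Prandtl problems. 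Truncating at a high order $N$, multiplying the layer profiles by a smooth cutoff supported near $r=1$ (to handle the singularity at $r=0$ and exploit the exponential decay as $Y\to-\infty$), and adjusting by a small boundary corrector so that the boundary conditions in (\ref{NS-curvilnear}) hold exactly yields an approximate solution $(u_a^\varepsilon,v_a^\varepsilon)$ with residual of size $O(\varepsilon^N)$ in the energy norms adapted to the layer.

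\emph{Stage (ii).} Write $(u^\varepsilon,v^\varepsilon)=(u_a^\varepsilon,v_a^\varepsilon)+(u,v)$, so that $(u,v)$ solves a forced linearization of (\ref{NS-curvilnear}) about $(u_a^\varepsilon,v_a^\varepsilon)$ plus a quadratic self-interaction, with forcing $O(\varepsilon^N)$. The crux is the a priori estimate for the linearized operator: passing to the stream function and performing weighted (anisotropic) energy estimates in the spirit of \cite{GI1,GZ,GZ2}, with the disk curvature terms in (\ref{NS-curvilnear}) handled perturbatively as in \cite{Iyer1}, one bounds $(u,v)$ by the forcing up to an $\varepsilon$-loss that is absorbed by taking $N$ large; the quadratic term is then controlled by a contraction in a ball of radius $O(\varepsilon^{N'})$, $N'<N$, using the smallness of $\varepsilon$ and $\eta$. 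Anisotropic Sobolev embedding converts the energy bound into $\|u^\varepsilon-u_e-u_p^{(0)}(\cdot,\tfrac{\cdot-1}{\varepsilon})\|_{L^\infty(\Omega)}\le C\varepsilon$ and $\|v^\varepsilon\|_{L^\infty(\Omega)}\le C\varepsilon$, since the remaining terms of $(u_a^\varepsilon,v_a^\varepsilon)$ beyond $u_e+u_p^{(0)}$ are themselves $O(\varepsilon)$. This is the step I expect to be hardest: obtaining a closed, $\varepsilon$-uniform energy estimate around a \emph{non-shear} boundary-layer profile while simultaneously controlling the degeneracy of the background flow near $Y=0$, the interior/layer coupling, and the curvature terms --- in effect merging the non-shear Prandtl analysis of \cite{GZ2} with the curved-geometry framework of \cite{Iyer1}.

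\emph{Stage (iii).} Fix $r_0\in(0,1)$. On $B_{r_0}(0)$ the cutoff layer correctors are exponentially small in $\varepsilon$, so $u^\varepsilon\to ar$ and $v^\varepsilon\to0$ uniformly there; interior elliptic estimates for (\ref{NS-curvilnear}) on $B_{r_0}$ then upgrade this to convergence of first derivatives, and hence $w^\varepsilon=\tfrac1r\partial_r(ru^\varepsilon)-\tfrac1r\partial_\theta v^\varepsilon\to\tfrac1r\partial_r(ar^2)=2a$ in $L^\infty(B_{r_0}(0))$, as claimed.
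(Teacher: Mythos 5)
Your overall architecture (approximate solution by matched asymptotics, error equation, $\varepsilon$-uniform linear estimate, contraction, interior limit) coincides with the paper's, but the crux of Stage (ii) is not actually supplied, and the frameworks you invoke do not transfer to this geometry. The stream-function/quotient estimates of \cite{GI1,GZ,GZ2} are built for no-slip walls, where the background tangential velocity degenerates at the boundary, and the works \cite{GN,Iyer1,GI1,GZ} all operate on a tangential \emph{interval} with prescribed inflow data (usually with smallness of its length); here $\theta$ is periodic, there is no inflow cross-section, and the wall moves, so the smallness parameter is $\eta$, not the width. What replaces those mechanisms in the paper is a specific positivity estimate: testing the linearized system with $(u_\theta,v_\theta)$ annihilates the pressure and the diffusion terms and, because $u^a\approx ar+\chi(r)u_p^{(0)}$ is positive, produces the coercive quantity $(a-C\eta)\iint r(u_\theta^2+v_\theta^2)\,d\theta\,dr$, with all Prandtl contributions absorbed by weighted Hardy inequalities (Lemma \ref{positivity estimate}). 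That quantity does not control the zero mode $u_0(r)=\frac{1}{2\pi}\int_0^{2\pi}u\,d\theta$ --- the obstruction specific to the periodic, closed-streamline setting --- and the paper recovers it from a separate basic energy estimate (Lemma \ref{basic Energy estimate}, using $\int_0^{2\pi}v\,d\theta=0$), before passing to Cartesian coordinates and Stokes regularity for the $H^2$ and $L^\infty$ bounds. None of these ingredients (the $(u_\theta,v_\theta)$ multiplier, the zero-mode estimate, the degeneracy of the weight $r$ at the origin) appear in your outline, so the step you yourself flag as hardest is left as an appeal to references whose hypotheses are not met here.

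There is also a concrete error in Stage (i): you impose $u_p^{(k)}\to0$ as $Y\to-\infty$ for every $k$. In the periodic setting this fails for $k\ge1$: the linearized Prandtl problems only yield $\partial_Yu_p^{(k)}\to0$, and $u_p^{(k)}$ tends to a nonzero constant $A_{k\infty}$ (see the remark following Proposition \ref{decay estimates of linearized Prandtl}). This forces the modification of the Euler correctors in \eqref{modify Euler}, adding $\chi(r)A_{k\infty}+A_k(r)$ to $u_e^{(k)}$, to restore matching; without it the expansion does not close and the residual is not $O(\varepsilon^N)$. Your Stage (iii) is fine in outline, though note that an $H^2$ bound alone does not control $\nabla(u,v)$ in $L^\infty$ in two dimensions, so the interior bootstrap you mention is genuinely needed rather than cosmetic (the paper is equally terse on this point).
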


\begin{Remark}
The condition $\int_0^{2\pi}f(\theta)d\theta=0$ can be dropped due to the fact
$$\alpha+\eta f(\theta)=\alpha+ \frac{\eta}{2\pi}\int_0^{2\pi}f(\theta)d\theta +\eta \tilde{f}(\theta) $$
where $\int_0^{2\pi} \tilde{f}(\theta) d\theta=0.$ Moreover, the smoothness of $f(\theta)$ can be relaxed, but we don't pursue this issue here.
\end{Remark}

Now we present a sketch of the proof and some key ideas.

{\bf Step 1: Construction of the approximate solution.} We construct an approximate solution $(u^a,v^a)$ by matched asymptotic expansion. The approximate solution consists of Euler part $(u_e^a,v^a_e)$ and Prandtl part $(u_p^a,v^a_p)$, and satisfies the following estimates
\begin{align*}
|\partial_\theta u_e^a(\theta,r)+v_e^a(\theta,r)|\leq C\varepsilon \eta r, \ \ |\partial_\theta v_e^a(\theta,r)-u_e^a(\theta,r)+ar|\leq C\varepsilon \eta r
\end{align*}
which will be used frequently in the error estimate.
The details of constructing the approximate solution will be given in Section 2. After the construction of approximate solution, we derive the  equations (\ref{e:error equation}) for the error $(u,v):=(u^\varepsilon-u^a,v^\varepsilon-v^a)$, then establish the well-posedness of (\ref{e:error equation}).
Notice that the nonlinear term can be easily handled by higher order approximation, hence we only need to consider the linearized error equations:
 \begin{align}\label{linearized error equation}
\left\{
\begin{array}{lll}
u^au_\theta+v^aru_r+uu^a_\theta+vru^a_r+v^au+vu^a+p_\theta-\varepsilon^2\big( ru_{rr}
+\frac{u_{\theta\theta}}{r}+2\frac{v_{\theta}}{r}+u_r-\frac{u}{r}\big)=F_u,\\[5pt]
u^av_\theta+v^arv_r+uv^a_\theta+vrv^a_r-2u u^a+rp_r-\varepsilon^2 \big( rv_{rr}+\frac{v_{\theta\theta}}{r}-2\frac{u_{\theta}}{r}+v_r-\frac{v}{r}\big)=F_v,\\[5pt]
u_\theta+(rv)_r=0,  \\[5pt]
u(\theta+2\pi,r)=u(\theta,r), \ v(\theta+2\pi,r)=v(\theta,r), \\[5pt]
u(\theta,1)=0,\ v(\theta,1)=0.
\end{array}
\right.
\end{align}

{\bf Step 2: Linear stability estimate for (\ref{linearized error equation}).}
Equations (\ref{linearized error equation}) are the linearized Navier-Stokes equations around the approximate solution $(u^a,v^a)$. The leading order of $(u^a,v^a)$ is $\Big(ar+\chi(r)u_p^{(0)}\big(\theta,\frac{r-1}{\varepsilon}\big),0\Big)$, where $\chi(r)$ is a cut-off function, see (\ref{cut-off function}). Since $|u^a_\theta|=|u^{(0)}_{p\theta}|\lesssim \eta$, the leading order of the system (\ref{linearized error equation}) can be simplified as
\begin{align}\label{simplied equation}
\left\{
\begin{array}{lll}
u^au_\theta+vru^a_r+vu^a+p_\theta-\varepsilon^2\big( ru_{rr}
+\frac{u_{\theta\theta}}{r}+2\frac{v_{\theta}}{r}+u_r-\frac{u}{r}\big)=F_u,\\[5pt]
u^av_\theta-2u u^a+rp_r-\varepsilon^2 \big( rv_{rr}+\frac{v_{\theta\theta}}{r}-2\frac{u_{\theta}}{r}+v_r-\frac{v}{r}\big)=F_v,\\[5pt]
u_\theta+(rv)_r=0,  \\[5pt]
u(\theta+2\pi,r)=u(\theta,r), \ v(\theta+2\pi,r)=v(\theta,r), \\[5pt]
u(\theta,1)=0,\ v(\theta,1)=0,
 \end{array}
 \right.
\end{align}
where $u^a$ can be regarded as $ar+\chi(r)u_p^{(0)}\big(\theta,\frac{r-1}{\varepsilon}\big)$.

The linear stability estimate consists of a basic energy estimate and a positivity estimate. In fact, it's easy to know that the basic energy estimate is not good enough for obtaining aprior estimate of (\ref{simplied equation}) because $\e$ is small. The key point is the following observation which gives the important positivity estimate: $u^a$ is strictly positive, we should make use of the terms $u^au_\theta$ and $u^av_\theta$ to obtain a positive quantity from the convective term. To do this, we choose $(u_\theta,v_\theta\big)$ as the multiplier, the pressure is eliminated due to the divergence-free condition and the diffusion terms also vanish. It is easy to obtain
{\small \begin{align*}
&\int_{0}^{1}\int_0^{2\pi}\big(u^au_\theta+vru^a_r+vu^a\big)u_\theta d\theta dr
+\int_{0}^{1}\int_0^{2\pi}\big(u^av_\theta-2u u^a\big)v_\theta d\theta dr\\
=&\underbrace{\int_{0}^{1}\int_0^{2\pi}\big(aru_\theta+2arv\big)u_\theta d\theta dr
+\int_{0}^{1}\int_0^{2\pi}\big(arv_\theta-2aru\big)v_\theta d\theta dr}_{I}\\
&+\underbrace{\int_{0}^{1}\int_0^{2\pi}\big(\chi(r)u_p^{(0)}u_\theta+vr\partial_r(\chi(r)u_p^{(0)})+v\chi(r)u_p^{(0)}\big)u_\theta d\theta dr
+\int_{0}^{1}\int_0^{2\pi}\big(\chi(r)u_p^{(0)}v_\theta-2u \chi(r)u_p^{(0)}\big)v_\theta d\theta dr}_{II}.
\end{align*}}
It is easy to get
\begin{align*}
I_1=a\int_{0}^{1}\int_0^{2\pi}r(u^2_\theta+v^2_\theta)d\theta dr.
\end{align*}
Moreover, since $\chi(r)=0$ for $r<\frac12$, by the Hardy inequality, we deduce that
\begin{align*}
&\Big|\int_{0}^{1}\int_0^{2\pi}vr\partial_r(\chi(r)u_p^{(0)})u_\theta d\theta dr\Big|\\
\leq&\Big|\int_{0}^{1}\int_0^{2\pi}vr\chi'(r)u_p^{(0)}u_\theta d\theta dr\Big|+\Big|\int_{0}^{1}\int_0^{2\pi}\frac{vr}{r-1}\chi(r)Y\partial_Yu_p^{(0)}u_\theta d\theta dr\Big|\\
\leq& C\eta \int_{0}^{1}\int_0^{2\pi}r(u^2_\theta+v^2_\theta)d\theta dr
\end{align*}
and
\begin{align*}
&\Big|\int_{0}^{1}\int_0^{2\pi}u\chi(r)u_p^{(0)}v_\theta d\theta dr\Big|\\
\leq&\varepsilon\Big|\int_{0}^{1}\int_0^{2\pi}\frac{u}{r-1}\chi(r)Yu_p^{(0)}v_\theta d\theta dr\Big|
\leq C\varepsilon\eta \Big(\int_{0}^{1}\int_0^{2\pi}ru^2_rd\theta dr\Big)^{\frac12}\Big(\int_{0}^{1}\int_0^{2\pi}rv^2_\theta d\theta dr\Big)^{\frac12}.
\end{align*}
The other terms in $II$ can be handled by the same argument. Thus, we obtain
\begin{align*}
&\int_{0}^{1}\int_0^{2\pi}\big(u^au_\theta+vru^a_r+vu^a\big)u_\theta d\theta dr
+\int_{0}^{1}\int_0^{2\pi}\big(u^av_\theta-2u u^a\big)v_\theta d\theta dr\\
\geq& (a-C\eta) \int_{0}^{1}\int_0^{2\pi}r(u^2_\theta+v^2_\theta)d\theta dr- C\varepsilon^2\eta\int_{0}^{1}\int_0^{2\pi}ru^2_rd\theta dr.
\end{align*}
When $\eta$ is sufficiently small, $(a-C\eta) \int_{0}^{1}\int_0^{2\pi}r(u^2_\theta+v^2_\theta)d\theta dr$ is a positive quantity. The details of the positivity estimate can be found in Lemma \ref{positivity estimate}.

 However, the positive quantity $\|\sqrt{r}(u_\theta,v_\theta)\|_2$ don't contain zero-frequency of $(u,v)$ which is needed to obtain the $L^\infty$ estimate for the error $(u,v)$. Notice that $\int_0^{2\pi}v(\theta,r) d\theta=0$ because of the divergence-free condition and the boundary condition, we can dominate $\|v\|_2$ by $\|v_\theta\|_2$ using the Poincar\'{e} inequality. However $u_0(r):=\frac{1}{2\pi}\int_0^{2\pi}u(\theta,r) d\theta\neq 0$, we need to obtain the estimate of $|u_0(r)|$ from the basic energy estimate.

 Now we give a sketch of the basic energy estimate.  Choose $u_0$ as a multiplier to the first equation in (\ref{simplied equation}). The diffusion term is
\begin{align*}
&\int_{0}^{1}\int_0^{2\pi}-\e^2\big( ru_{rr}
+\frac{u_{\theta\theta}}{r}+2\frac{v_{\theta}}{r}+u_r-\frac{u}{r}\big)u_0(r)d\theta dr=\e^2\int_{0}^{1}\int_0^{2\pi}\big(r|u'_0|^2+\frac{u_0^2}{r}\big)d\theta dr.
\end{align*}
Recall that $u^a= ar+\chi(r)u^{(0)}_p(\theta,\frac{r-1}{\e})$.  It's direct to deduce that the following quantity vanishes:
\begin{align*}
&\int_{0}^{1}\int_0^{2\pi}\big( aru_\theta+2arv+p_\theta\big)u_0d\theta dr=0,
\end{align*}
where we used $\int_0^{2\pi}v(\theta,r) d\theta=0$. Moreover, the Prandtl part can be handled by the Hardy inequality as above
\begin{align*}
&\int_{0}^{1}\int_0^{2\pi}\big( u_p^{(0)}u_\theta+vr\p_r u_{p}^{(0)}+vu_p^{(0)}\big)u_0d\theta dr\\
\approx &\int_{0}^{1}\int_0^{2\pi}\e\chi(r)\Big( Yu_p^{(0)}u_\theta+\frac{vr}{r-1} Y^2\p_Y u_{p}^{(0)}+v Yu_p^{(0)}\Big)\frac{u_0}{r-1}d\theta dr
\lesssim\eta\e\|\sqrt{r}(u_\theta,v_\theta)\|_2\|\sqrt{r}u'_0\|_2.
\end{align*}
Thus, we obtain that $\varepsilon^2\|\sqrt{r}u'_0\|_2\lesssim \|\sqrt{r}(u_\theta,v_\theta)\|_2$. One can see the details of the
basic energy estimate in Lemma \ref{basic Energy estimate}.

 Combining the positivity estimate and basic energy estimate, we obtain the linear stability of equations (\ref{simplied equation}).

{\bf Step 3:  $H^2$ estimate for error.} To close the nonlinearity, we need the $L^\infty$ estimate. However, the quantity $\|\sqrt{r}(u_\theta,v_\theta)\|_2$ can't be used to control $\|(u,v)\|_0$ due to the weight $r$, and we can only use $\int_{0}^{1}\int_0^{2\pi}r(u^2_r+v_r^2)d\theta dr$ which comes from  the diffusion term.  To do this, we first rewrite the error equations and the associated linear stability estimate in Euler coordinates, then get the $H^2$ estimate by Stokes estimates in a smooth bounded domain, finally $\|(u,v)\|_0$ can be obtained by Sobolev embedding.

Finally, combining the linear stability estimate,  $H^2$ estimate and Sobolev embedding, we can establish the well-posedness of error equations by contraction mapping theorem.

The paper is organized as follows. In Section 2, we construct an approximate solution by matched asymptotic expansion method and study the property of this approximate solution. In Section 3, we derive the error equations and establish their linear stability estimate. The linear stability estimate consists of the basic energy estimate and the positivity estimate. In Section 4, we firstly rewrite the error equations and the associated linear stability estimate in Euler coordinates, then obtain $H^2$ estimate by the Stokes estimate in a smooth bounded domain. In Section 5, we complete the proof of Theorem \ref{main theorem} by combining the linear stability estimate and  $H^2$ estimate.

\smallskip


\section{Construction of approximate solutions}
\indent

In this section, we construct an approximate solution of the Navier-Stokes equations (\ref{NS-curvilnear}) by matched asymptotic expansion.

\subsection{Euler expansions}

\indent

Away from the boundary, we make the following formal expansions
\begin{align*}
&u^{\varepsilon}(\theta,r)=u_e^{(0)}(\theta,r)+\varepsilon u_e^{(1)}(\theta,r)+\cdots,\\[5pt]
&v^{\varepsilon}(\theta,r)=v_e^{(0)}(\theta,r)+\varepsilon v_e^{(1)}(\theta,r)+\cdots,\\[5pt]
&p^{\varepsilon}(\theta,r)=p_e^{(0)}(\theta,r)+\varepsilon p_e^{(1)}(\theta,r)+\cdots.
\end{align*}

\subsubsection{Equations for $(u_e^{(0)},v_e^{(0)},p_e^{(0)})$}

\indent

By substituting the above expansions into (\ref{NS-curvilnear}) and collecting the $\varepsilon$-zeroth order terms, we deduce that $(u_e^{(0)},v_e^{(0)},p_e^{(0)})$ satisfies the following steady nonlinear Euler equations
\begin{eqnarray}
\left \{
\begin {array}{ll}
u_e^{(0)} \partial_\theta u_e^{(0)}+rv_e^{(0)} \partial_ru_e^{(0)}+u_e^{(0)} v_e^{(0)}+\partial_\theta p_e^{(0)}=0,\\[7pt]
 u_e^{(0)} \partial_\theta v_e^{(0)}+rv_e^{(0)} \partial_rv_e^{(0)}-(u_e^{(0)})^2+r\partial_rp_e^{(0)}=0,\\[7pt]
 \partial_\theta u_e^{(0)}+r\partial_rv_e^{(0)}+v_e^{(0)}=0.\label{outer-leading order equation}
\end{array}
\right.
\end{eqnarray}

Due to the Prandtl-Bathelor theory in \cite{B}, the leading order Euler flows $(u_e^{(0)},v_e^{(0)})$ with closed streamlines must be the Couette flows in $\Omega$
\begin{align}
u_e^{(0)}(\theta,r)=u_e(r)=:ar,\ \ v_e^{(0)}(\theta,r)=0,\label{outer-leading order velocity}
\end{align}
where the value of $a$ is a constant determined by the Wood formula (\ref{BW formula}).

Then equations (\ref{outer-leading order equation}) reduce to
 \begin{align}
\partial_\theta p_e^{(0)}(\theta,r)=0,\quad  \partial_rp_e^{(0)}(\theta,r)=\frac{1}{r}u_e^2.\nonumber
 \end{align}
 Thus, we deduce that
 \begin{align}
 p_e^{(0)}(\theta,r)=p_e(r),\quad  p'_e(r)=a^2r.\label{outer-leading order pressure}
 \end{align}

\subsubsection{Equations for $(u_e^{(1)},v_e^{(1)},p_e^{(1)})$}

\indent

By collecting the $\varepsilon$-order terms, we deduce that $(u_e^{(1)},v_e^{(1)},p_e^{(1)})$ satisfies the following linearized Euler equations in $\Omega$
\begin{eqnarray}
\left \{
\begin {array}{ll}
ar \partial_\theta u_e^{(1)}+2arv_e^{(1)} +\partial_\theta p_e^{(1)}=0,\\[5pt]
ar \partial_\theta v_e^{(1)}-2aru_e^{(1)}+r\partial_rp_e^{(1)}=0,\\[5pt]
\partial_\theta u_e^{(1)}+r\partial_rv_e^{(1)}+ v_e^{(1)}=0,\label{outer-1 order equation}
\end{array}
\right.
\end{eqnarray}
which are equipped with the boundary condition
\begin{align}
 v_e^{(1)}|_{r=1}=-v_p^{(1)}|_{Y=0},\quad v_e^{(1)}(\theta,r)=v_e^{(1)}(\theta+2\pi,r) ,\label{outer-1 order-bc}
\end{align}
where $v_p^{(1)}$ is the solution of Prandtl equations which will be derived in next subsection.

The equations for $(u_e^{(i)}, v_e^{(i)}, p_e^{(i)}), i=2,3,4$ will be derived later.

\subsection{Prandtl expansion near $r=1$}
\indent

We  introduce the scaled variable  $Y=\frac{r-1}{\varepsilon}\in (-\infty,0]$ and make the following Prandtl expansions near $r=1$
\begin{align}\label{first order expansion}
\begin{aligned}
&u^\varepsilon=u_e(r)+u_p^{(0)}(\theta,Y)+\varepsilon\big[u_e^{(1)}(\theta,r)+u_p^{(1)}(\theta,Y)\big]
+\cdots,\\[5pt]
&v^\varepsilon=v_p^{(0)}(\theta,Y)+\varepsilon\big[v_e^{(1)}(\theta,r)+v_p^{(1)}(\theta,Y)\big]
+\varepsilon^2\big[v_e^{(2)}(\theta,r)+v_p^{(2)}(\theta,Y)\big]+\cdots,\\[5pt]
&p^\varepsilon=p_e(r)+p_p^{(0)}(\theta,Y)+\varepsilon\big[p_e^{(1)}(\theta,r)+p_p^{(1)}(\theta,Y)\big]
+\varepsilon\big[p_e^{(2)}(\theta,r)+p_p^{(2)}(\theta,Y)\big]+\cdots,
\end{aligned}
\end{align}
where as $Y\rightarrow -\infty$
\begin{align}
  \partial_\theta^l\partial_Y^mv_p^{(i)}(\theta,Y)\rightarrow 0, \ \partial_\theta^l\partial_Y^mp_p^{(i)}(\theta,Y)\rightarrow 0,\label{matching condition}
\end{align}
here $l,m\geq 0, i=0,1,\cdots,$
which satisfies the following boundary condition
\begin{align*}
&u_e^{(0)}(\theta,1)+u_p^{(0)}(\theta,0)=\alpha+\eta f(\theta), \quad u_e^{(i)}(\theta,1)+u_p^{(i)}(\theta,0)=0,\ i\geq 1,\\
&v_e^{(i)}(\theta,1)+v_p^{(i)}(\theta,0)=0, i\geq 0.
\end{align*}
The boundary conditions of $u_p^{(i)}(\theta,Y)$ as $Y\rightarrow-\infty $ will be given later.

\subsubsection{Equations for $(v_p^{(0)}, p_p^{(0)})$}

\indent

By substituting the above expansions into (\ref{NS-curvilnear}) and collecting the $\frac{1}{\varepsilon}$ order terms, we get
\begin{align*}
\partial_Yv_p^{(0)}(\theta,Y)=0,\ \ \partial_Yp_p^{(0)}(\theta,Y)=0,
\end{align*}
which together with (\ref{matching condition}) imply
\begin{align*}
v_p^{(0)}=0,\ \ p_p^{(0)}=0.
\end{align*}

\subsubsection{Equations for $(u_p^{(0)},v_p^{(1)},p_p^{(1)})$}

\indent

By substituting the above expansions into (\ref{NS-curvilnear}) and collecting the $\varepsilon$-zeroth order terms, we obtain
 the following steady Prandtl equations for $(u_p^{(0)},v_p^{(1)})$
\begin{eqnarray}
\left \{
\begin {array}{ll}
\big(u_e(1)+u_p^{(0)}\big)\partial_\theta u_p^{(0)}+\big(v_e^{(1)}(\theta,1)+ v_p^{(1)}\big)\partial_Yu_p^{(0)}-\partial_{YY}u_p^{(0)}=0,\\[5pt]
\partial_\theta u_p^{(0)}+\partial_Yv_p^{(1)}=0,\\[5pt]
u_p^{(0)}(\theta,Y)=u_p^{(0)}(\theta+2\pi,Y),\quad v_p^{(1)}(\theta,Y)=v_p^{(1)}(\theta+2\pi,Y),\\[5pt]
u_p^{(0)}\big|_{Y=0}=\alpha+\eta f(\theta)-u_e(1),\ \ \lim_{Y\rightarrow -\infty}(u_p^{(0)},v_p^{(1)})=(0,0)
\label{prandtl problem 1}
\end{array}
\right.
\end{eqnarray}
and the pressure $p_p^{(1)}$ satisfies
\begin{align}\label{equation of first pressure}
\partial_Yp_p^{(1)}(\theta, Y)=(u_p^{(0)})^2(\theta,Y)+2u_e(1)u_p^{(0)}(\theta,Y), \ \lim_{Y\rightarrow -\infty}p_p^{(1)}(\theta,Y)=0.
\end{align}

\subsubsection{Equations for $(u_p^{(1)},v_p^{(2)})$}

\indent

By substituting the above expansions into the first and third equation in (\ref{NS-curvilnear}) and collecting the $\varepsilon$-order terms, we obtain
 the following linearized steady Prandtl equations for $(u_p^{(1)},v_p^{(2)})$
\begin{eqnarray}
\left \{
\begin {array}{ll}
\big(u_e(1)+u_p^{(0)}\big)\partial_\theta u_p^{(1)}+\big(v_e^{(1)}(\theta,1)+ v_p^{(1)}\big)\partial_Yu_p^{(1)}+(v_e^{(2)}(\theta,1)+v_p^{(2)})\partial_{Y}u_p^{(0)}\\[5pt]
\quad \quad \quad \quad +(u_p^{(1)}+u_e^{(1)}(\theta,1))\partial_\theta u_p^{(0)}-\partial_{YY}u_p^{(1)}=f_1(\theta,Y)\\[5pt]
\partial_\theta u_p^{(1)}+\partial_Yv_p^{(2)}+\partial_Y(Yv_p^{(1)})=0,\\[5pt]
u_p^{(1)}(\theta,Y)=u_p^{(1)}(\theta+2\pi,Y),\quad v_p^{(2)}(\theta,Y)=v_p^{(2)}(\theta+2\pi,Y),\\[5pt]
u_p^{(1)}\big|_{Y=0}=-u_e^{(1)}\big|_{r=1},\ \ \lim_{Y\rightarrow -\infty}(\partial_Yu_p^{(1)},v_p^{(2)})=(0,0),
\label{first linearized prandtl problem near 1}
\end{array}
\right.
\end{eqnarray}
where
\begin{align*}
f_1(\theta,Y)=&-\partial_\theta p_p^{(1)}+Y\partial_{YY}u_p^{(0)}+\partial_Yu_p^{(0)}\\[5pt]
&-u_p^{(0)}\big(\partial_\theta u_e^{(1)}(\theta,1)+v_e^{(1)}(\theta,1)+v_p^{(1)}\big)-u'_e(1)Y\partial_\theta u_p^{(0)}\\[5pt]
&-(\partial_rv_e^{(1)}(\theta,1)+v_e^{(1)}(\theta,1))Y\partial_Y u_p^{(0)}-(u_e'(1)+Y\partial_Yu_p^{(0)}+u_e(1))v_p^{(1)}.
\end{align*}
The equations for $(u_p^{(i)}, v_p^{(i+1)}), i=2,3,4$ will be derived later.

\subsection{Solvabilities of Euler equations and Prandtl equations}
\indent

The order in which we solve the equations is as follows
\begin{align*}
(u_e(r),0)\rightarrow (u_p^{(0)},v_p^{(1)})\rightarrow (u_e^{(1)},v_e^{(1)})\rightarrow(u_p^{(1)},v_p^{(2)})\rightarrow\cdot\cdot\cdot.
\end{align*}

\subsubsection{Prandtl equations and their solvabilities}
\indent

 We first derive some necessary condition for the solvability of Prandtl equations.
\begin{lemma}\label{BW}(Batchelor-Wood formula \cite{K1998}\cite{K2000}) Let $\int_0^{2\pi}f(\theta)d\theta=0$. If the nonlinear Prandtl equations (\ref{prandtl problem 1}) have a solution $(u_p^{(0)}, v_p^{(1)})$ which satisfies
\beno
u_e(1)+u_p^{(0)}(\theta,Y)>0, \quad \forall Y\leq 0, \quad
\|u_p^{(0)}\|_0\leq M,
\eeno
where $M>0$ is a constant,  then there holds
\begin{align}
u_{e}^2(1)=\alpha^2+\frac{\eta^2}{2\pi}\int_0^{2\pi}f^2(\theta)d\theta.\label{BW formula}
\end{align}
\end{lemma}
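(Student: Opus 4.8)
The plan is to derive the Batchelor--Wood relation as a solvability (compatibility) condition for the nonlinear Prandtl system (\ref{prandtl problem 1}). The key structural feature is that the tangential Prandtl equation, written in divergence form using the divergence-free condition $\partial_\theta u_p^{(0)}+\partial_Y v_p^{(1)}=0$, becomes a conservation law in $\theta$ after multiplying by a suitable factor; integrating over the half-line $Y\in(-\infty,0]$ and over the period $\theta\in[0,2\pi]$ kills the interior contributions and leaves only boundary data at $Y=0$ and $Y=-\infty$, which by the matching/decay assumptions reduces to an algebraic identity relating $u_e(1)$ to $\alpha$ and $\eta f$.

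Concretely, first I would rewrite the convective part of the first equation in (\ref{prandtl problem 1}) as $\partial_\theta\big(\tfrac12(u_e(1)+u_p^{(0)})^2\big)+\partial_Y\big((v_e^{(1)}(\theta,1)+v_p^{(1)})(u_e(1)+u_p^{(0)})\big)$, using the divergence-free relation to convert $(u_e(1)+u_p^{(0)})\partial_\theta u_p^{(0)}+(v_e^{(1)}(\theta,1)+v_p^{(1)})\partial_Y u_p^{(0)}$ into flux form (note $\partial_\theta u_e(1)=0$ and the stretched normal velocity is divergence-free in $(\theta,Y)$). Thus the equation reads $\partial_\theta\big(\tfrac12(u_e(1)+u_p^{(0)})^2\big)+\partial_Y(\,\cdots) = \partial_{YY}u_p^{(0)}$. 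Next I would integrate in $\theta$ over one period: the $\theta$-derivative term integrates to zero by periodicity, giving $\partial_Y\int_0^{2\pi}(v_e^{(1)}(\theta,1)+v_p^{(1)})(u_e(1)+u_p^{(0)})\,d\theta=\partial_Y\int_0^{2\pi}\partial_Y u_p^{(0)}\,d\theta$, an identity in $Y$. Then I would integrate this in $Y$ from $-\infty$ to $0$; the decay conditions (\ref{matching condition}) and $\lim_{Y\to-\infty}(u_p^{(0)},v_p^{(1)})=(0,0)$, together with the boundedness $\|u_p^{(0)}\|_0\le M$ and the positivity $u_e(1)+u_p^{(0)}>0$ (needed to control signs and to justify that the flux vanishes at $-\infty$), make the $Y=-\infty$ endpoint vanish. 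What survives is an identity purely at $Y=0$: using $v_p^{(1)}(\theta,0)=-v_e^{(1)}(\theta,1)$ so that $v_e^{(1)}(\theta,1)+v_p^{(1)}(\theta,0)=0$, the boundary flux term drops, and one is left with $\int_0^{2\pi}\partial_Y u_p^{(0)}(\theta,0)\,d\theta$ being determined — but this alone is not yet the formula.

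To extract (\ref{BW formula}) one needs a second multiplier. The right choice is to multiply the tangential equation by $(u_e(1)+u_p^{(0)})$ itself before integrating, or equivalently to test the flux-form equation against $1$ after first multiplying through appropriately, so as to produce $\partial_\theta\big(\tfrac13(u_e(1)+u_p^{(0)})^3\big)$ plus a $Y$-flux plus the term $(u_e(1)+u_p^{(0)})\partial_{YY}u_p^{(0)}$. Integrating in $\theta$ kills the first term; integrating in $Y$ and integrating the viscous term by parts, $\int_{-\infty}^0 (u_e(1)+u_p^{(0)})\partial_{YY}u_p^{(0)}\,dY = \big[(u_e(1)+u_p^{(0)})\partial_Y u_p^{(0)}\big]_{-\infty}^0 - \int_{-\infty}^0 (\partial_Y u_p^{(0)})^2\,dY$, where the boundary term at $Y=0$ must be handled via the boundary condition. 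Combining the two integral identities and using $u_p^{(0)}(\theta,0)=\alpha+\eta f(\theta)-u_e(1)$, the dissipation integrals $\int(\partial_Y u_p^{(0)})^2$ cancel between the two relations, and one lands on $\int_0^{2\pi}\big((u_e(1)+u_p^{(0)}(\theta,0))^2 - u_e^2(1)\big)\,d\theta = 0$, i.e.\ $\int_0^{2\pi}(\alpha+\eta f(\theta))^2\,d\theta = 2\pi\, u_e^2(1)$; since $\int_0^{2\pi}f(\theta)\,d\theta=0$, this is exactly (\ref{BW formula}).

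The main obstacle I anticipate is justifying all the boundary terms at $Y=-\infty$ and the integrations by parts rigorously: one must know not just that $u_p^{(0)}\to 0$ but that $\partial_Y u_p^{(0)}$ decays and that products like $(u_e(1)+u_p^{(0)})\partial_Y u_p^{(0)}$ and the cubic flux vanish in the limit, and that $\int_{-\infty}^0 (\partial_Y u_p^{(0)})^2\,dY<\infty$. The hypotheses as stated ($\|u_p^{(0)}\|_0\le M$ and pointwise positivity) are somewhat weak for this, so the honest proof will either invoke the a priori decay built into the definition of a Prandtl solution in (\ref{matching condition}) applied also to $u_p^{(0)}$, or will first establish exponential decay of $u_p^{(0)}$ and its $Y$-derivatives from the structure of (\ref{prandtl problem 1}) (a maximum-principle / energy argument for the stretched heat-type operator). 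I would also need to be careful that the positivity $u_e(1)+u_p^{(0)}>0$ is genuinely used — it guarantees the convective operator is "forward parabolic" in $\theta$ and legitimizes treating $\theta$ as a time-like variable, which is implicitly what makes the divergence-form manipulation and the vanishing of period-averaged fluxes meaningful.
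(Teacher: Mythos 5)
Your plan breaks down at the final step, where you claim that combining the two integral identities yields $\int_0^{2\pi}\big((u_e(1)+u_p^{(0)}(\theta,0))^2-u_e^2(1)\big)d\theta=0$. Write $\bar u=u_e(1)+u_p^{(0)}$, $\bar v=v_e^{(1)}(\theta,1)+v_p^{(1)}$. Testing the conservation form against $1$ gives only
$\int_0^{2\pi}\partial_Y u_p^{(0)}(\theta,0)\,d\theta=0$ (no dissipation integral appears here, since $\int\partial_{YY}u_p^{(0)}$ integrates to a boundary term), while testing against $\bar u$ gives
$\int_0^{2\pi}(\alpha+\eta f)\,\partial_Y u_p^{(0)}(\theta,0)\,d\theta=\int_{-\infty}^0\int_0^{2\pi}(\partial_Y u_p^{(0)})^2\,d\theta\,dY$. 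Combining these yields $\eta\int_0^{2\pi}f\,\partial_Y u_p^{(0)}(\theta,0)\,d\theta=\|\partial_Y u_p^{(0)}\|_{L^2}^2$ -- a true identity about the wall shear stress, but not (\ref{BW formula}); there is no cancellation of dissipation integrals and no way to produce $(\alpha+\eta f)^2$ or $u_e^2(1)$ from these two relations. The underlying obstruction is that the quantity conserved here is $\int_0^{2\pi}\bar u^2\,d\theta$ computed along level sets of the stream function, not along fixed-$Y$ slices: indeed your own energy identity shows $\frac{d}{dY}\int_0^{2\pi}\bar u^2\,d\theta\big|_{Y=0}=2\|\partial_Y\bar u\|_{L^2}^2>0$ in general, so comparing $Y=0$ with $Y=-\infty$ through fixed-$Y$ identities cannot close.

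The paper's proof introduces the von Mises variables $\psi=\int_0^Y\bar u(\theta,z)\,dz$, $\mathcal{U}^{(0)}(\theta,\psi)=\bar u(\theta,Y)$ -- this is exactly where the positivity hypothesis $u_e(1)+u_p^{(0)}>0$ is used, to make the change of variables monotone -- under which (\ref{prandtl problem 1}) becomes $2\mathcal{U}^{(0)}_\theta=\big((\mathcal{U}^{(0)})^2\big)_{\psi\psi}$. Integrating in $\theta$ over a period shows $\int_0^{2\pi}(\mathcal{U}^{(0)})^2\,d\theta$ is affine in $\psi$, hence constant by boundedness, and evaluating at $\psi=0$ and $\psi\to-\infty$ gives (\ref{BW formula}). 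If you insist on working in $(\theta,Y)$, the correct second multiplier is not $\bar u$ but the stream function $\psi$ itself: testing $\partial_\theta(\bar u^2)+\partial_Y(\bar u\bar v)-\partial_{YY}\bar u=0$ against $\psi$ makes the convective contribution vanish identically (since $\partial_\theta\psi=-\bar v$, $\partial_Y\psi=\bar u$) and leaves $\int_{-\infty}^0\int_0^{2\pi}\bar u\,\partial_Y\bar u\,d\theta\,dY=0$, which is precisely $\int_0^{2\pi}(\alpha+\eta f)^2 d\theta=2\pi u_e^2(1)$. That choice is the von Mises argument in disguise; your multipliers $1$ and $\bar u$ do not reach it.
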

\begin{proof}
We introduce the von Mises variable
\begin{align*}
\psi=\int_0^Y\big(u_e(1)+u_p^{(0)}(\theta,z)\big)dz,\ \ \mathcal{U}^{(0)}(\theta,\psi)=u_e(1)+u_p^{(0)}(\theta,Y).
\end{align*}
Then from (\ref{prandtl problem 1}), we deduce that $\mathcal{U}^{(0)}$ satisfies
\begin{eqnarray}
\left \{
\begin {array}{ll}
2\mathcal{U}^{(0)}_\theta=\big((\mathcal{U}^{(0)})^2\big)_{\psi\psi},\\[5pt]
\mathcal{U}^{(0)}(\theta,\psi)=\mathcal{U}^{(0)}(\theta+2\pi,\psi),\\[5pt]
\mathcal{U}^{(0)}\big|_{\psi=0}=\alpha+\eta f(\theta),\ \ \lim_{\psi\rightarrow-\infty}\mathcal{U}^{(0)}=u_{e}(1).\label{modified prandtl equation}
\end{array}
\right.
\end{eqnarray}
Here we have used the facts:
\begin{align*}
&\partial_\theta u_p^{(0)}=\mathcal{U}^{(0)}_\theta+\mathcal{U}^{(0)}_\psi\int_0^Y\partial_\theta u_p^{(0)}(\theta,z)\big)dz
\\[5pt]&\qquad=\mathcal{U}^{(0)}_\theta+\mathcal{U}^{(0)}_\psi\big(v_p^{(1)}(\theta,0)-v_p^{(1)}(\theta,Y)\big)
=\mathcal{U}^{(0)}_\theta-\mathcal{U}^{(0)}_\psi\big(v^{(1)}(\theta,1)+v_p^{(1)}(\theta,Y)\big),
\\[5pt]&\partial_Yu_p^{(0)}=\mathcal{U}^{(0)}_\psi\mathcal{U}^{(0)}.
\end{align*}
Integrating the first equation in (\ref{modified prandtl equation}) from $0$ to $2\pi$ about $\theta$ leads to
\begin{align*}
\frac{\partial^2}{\partial\psi^2}\int_0^{2\pi}(\mathcal{U}^{(0)})^2(\theta,\psi)d\theta=0.
\end{align*}
Notice that $\mathcal{U}^{(0)}$ is bounded at $\psi\rightarrow-\infty$, we deduce that
\begin{align*}
\frac{\partial}{\partial\psi}\int_0^{2\pi}(\mathcal{U}^{(0)})^2(\theta,\psi)d\theta=0.
\end{align*}
Therefore combining the boundary condition in (\ref{modified prandtl equation}), we deduce that
\begin{align*}
u_{e}^2(1)=\frac{1}{2\pi}\int_0^{2\pi}\big(\alpha+\eta f(\theta)\big)^2(\theta)d\theta&=\alpha^2+\frac{\alpha\eta}{\pi}\int_0^{2\pi}f(\theta)d\theta+\frac{\eta^2}{2\pi}\int_0^{2\pi}f^2(\theta)d\theta
\\&=\alpha^2+\frac{\eta^2}{2\pi}\int_0^{2\pi}f^2(\theta)d\theta.
\end{align*}
Thus, we complete the proof of this lemma.
\end{proof}

The following  Corollary is a direct result of  Lemma \ref{BW}.
\begin{Corollary}\label{circular flow  formula} If Lemma \ref{BW} holds,  then
\begin{align*}
a=\Big(\alpha^2+\frac{\eta^2}{2\pi}\int_0^{2\pi}f^2(\theta)d\theta\Big)^{\frac12}.
\end{align*}
\end{Corollary}

Next we aim to solve the steady Prandtl equations (\ref{prandtl problem 1}) and we use the method of \cite{K2000}.
\begin{proposition}\label{decay estimates} There exists $\eta_0>0$ such that for any $\eta\in (0,\eta_0)$, the equation (\ref{modified prandtl equation}) has a unique solution $\mathcal{U}^{(0)}$ which satisfies the following estimates
\begin{align*}
\sum_{j+k\leq m}\int_{-\infty}^0\int_0^{2\pi}\Big|\partial_\theta^j\partial_\psi^k \big(\mathcal{U}^{(0)}-u_e(1)\big)\Big|^2\big<\psi\big>^{2l}d\theta d\psi\leq C(m,l)\eta^2,\ m,l\geq 0
\end{align*}
here $\big<\psi\big>=\sqrt{1+\psi^2}$.

\end{proposition}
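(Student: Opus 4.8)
The plan is to read (\ref{modified prandtl equation}) as a $\theta$-periodic quasilinear parabolic problem on the half-line $\psi\le 0$ and solve it by perturbation off the constant state $u_e(1)=a$, the constant $a$ from Corollary~\ref{circular flow  formula} being precisely the compatibility condition that makes the zero $\theta$-mode solvable. First I would substitute $\mathcal{U}^{(0)}=a+w$ and expand $(\mathcal{U}^{(0)})^2=a^2+2aw+w^2$, turning (\ref{modified prandtl equation}) into
\[
2\partial_\theta w-2a\,\partial_{\psi\psi}w=\partial_{\psi\psi}(w^2),\qquad w(\theta,0)=g(\theta):=\alpha+\eta f(\theta)-a,\qquad \lim_{\psi\to-\infty}w=0,
\]
$2\pi$-periodic in $\theta$. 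Since $\int_0^{2\pi}f\,d\theta=0$, the datum splits as $g=(\alpha-a)+\eta f$ with zero-mode part $\alpha-a=O(\eta^2)$ and oscillatory part $\eta f=O(\eta)$; and since $a>0$ is of order one while $w$ will turn out to be $O(\eta)$, rewriting the right-hand side as $2\,\partial_\psi(w\,\partial_\psi w)$ shows the equation is uniformly parabolic with principal coefficient $a+w\approx a$.

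Next I would construct the linear solution operator by Fourier series in $\theta$, $w=\sum_n w_n(\psi)e^{in\theta}$. For each $n\neq 0$ the boundary value problem $-2a\,w_n''+2in\,w_n=(\text{source})_n$ on $(-\infty,0]$ with $w_n(0)$ prescribed and $w_n\to 0$ is uniquely solvable, the decaying homogeneous mode being $e^{\mu_n\psi}$ with $\mu_n^2=in/a$ and $\operatorname{Re}\mu_n=\sqrt{|n|/(2a)}>0$; this gives exponential decay in $|\psi|$ at rate $\sim\sqrt{|n|}$, hence control of any polynomial weight $\langle\psi\rangle^{l}$. The delicate mode is $n=0$: averaging the equation over $\theta$ (write $\overline{\,\cdot\,}$ for the $\theta$-average) and using periodicity gives $\partial_{\psi\psi}\big(a\overline w+\tfrac{1}{2}\overline{w^2}\big)=0$, and the far-field condition forces the identity $a\overline w+\tfrac{1}{2}\overline{w^2}\equiv 0$; evaluated at $\psi=0$ this is exactly the Batchelor--Wood relation (\ref{BW formula}) defining $a$ (the content of Lemma~\ref{BW}), and it slaves the zero mode algebraically to the oscillatory part $w^\perp:=w-\overline w$, namely $\overline w=-a+\sqrt{a^2-\overline{(w^\perp)^2}}=O(\eta^2)$. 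Thus the genuine unknown is $w^\perp$, which carries the purely oscillatory datum $\eta f$ and is governed by the invertible nonzero modes.

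With this in hand I would run a contraction mapping for the map $\Phi:\tilde w\mapsto w$ defined by solving $2\partial_\theta w-2a\partial_{\psi\psi}w=\partial_{\psi\psi}(\tilde w^2)$ with datum $g$ and with the zero mode fixed as above, in the ball of radius $C(m,l)\eta$ for the weighted norm appearing in the statement (the boundary condition built into the space, so that the value of the slaved zero mode at $\psi=0$ is $-a+\sqrt{a^2-\eta^2\,\overline{f^2}}=\alpha-a=g_0$, using $a^2=\alpha^2+\eta^2\,\overline{f^2}$). The source $\partial_{\psi\psi}(\tilde w^2)$ is quadratic and inherits the weighted decay of $\tilde w$; summing the mode-by-mode linear bounds against $\langle\psi\rangle^{2l}$ — integrating by parts in $\psi$ and controlling the $\psi=0$ traces via the smooth datum $\eta f$ and the equation — yields an estimate of the form $\|\Phi(\tilde w)\|_{m,l}\le C(m,l)\eta+C(m,l)\|\tilde w\|_{m,l}^2$ together with the companion Lipschitz bound $\|\Phi(\tilde w_1)-\Phi(\tilde w_2)\|\le C\eta\,\|\tilde w_1-\tilde w_2\|$, so that $\Phi$ contracts the small ball once $\eta$ is small. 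Uniqueness in this class follows since a difference of two solutions solves a linear uniformly parabolic $\theta$-periodic problem with $O(\eta)$ coefficients, zero datum and zero far-field limit, which an energy estimate kills; running the fixed point for every $(m,l)$ and invoking uniqueness produces a single solution satisfying all the claimed weighted bounds, which are exactly the estimates for $\mathcal{U}^{(0)}-u_e(1)=w$.

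I expect the main obstacle to be precisely the zero $\theta$-mode: because the problem is $\theta$-periodic rather than an initial value problem, the diffusion gives no smoothing or decay in $\theta$ for that mode, so its solvability subject to $w\to 0$ as $\psi\to-\infty$ is not an evolution but an algebraic compatibility constraint — the Batchelor--Wood formula — and one must verify that the quadratic nonlinearity feeds this mode exactly the source consistent with that constraint at each step of the iteration, which is where the special choice of $a$ becomes indispensable. The only other, purely technical, point is propagating the polynomial weights $\langle\psi\rangle^{2l}$ through arbitrarily many $\psi$-derivatives while keeping the boundary contributions at $\psi=0$ under control; here the exponential-in-$|\psi|$ decay of the nonzero modes, together with the smoothness of the datum $\eta f$, makes the weighted estimates close.
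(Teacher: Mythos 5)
Your scheme is correct, and it reaches the same destination by a slightly different change of unknown. The paper does not work with $w=\mathcal{U}^{(0)}-u_e(1)$ but with $Q=(\mathcal{U}^{(0)})^2-u_e^2(1)$, rewriting (\ref{modified prandtl equation}) as $Q_\theta-u_e(1)Q_{\psi\psi}=(\mathcal{G}(Q))_\theta$ (see (\ref{modified prandtl equation-2})), so that the nonlinearity sits under a $\theta$-derivative rather than under $\partial_{\psi\psi}$; the Batchelor--Wood formula (\ref{BW formula}) then makes the boundary datum of $Q$ have zero $\theta$-average, whence the zero mode of $Q$ vanishes identically ((\ref{zero average})) and the Poincar\'e inequality in $\theta$ is available throughout the weighted energy estimates for the linear operator $\mathcal{L}$ in (\ref{definition L}). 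This is exactly the algebraic content of your slaving relation $2a\,\overline{w}+\overline{w^2}\equiv 0$ (equivalently $\overline{(\mathcal{U}^{(0)})^2}\equiv a^2$), but the substitution $Q=2aw+w^2$ absorbs it into the definition of the unknown, so the paper never has to track an overdetermined zero mode through the iteration. Your version keeps $w$ as the unknown and must check, as you correctly do, that the slaved zero mode $\overline{w}=-\tfrac{1}{2a}\overline{\tilde w^2}$ hits the value $\alpha-a$ at $\psi=0$ at every step; this works precisely because $a^2=\alpha^2+\tfrac{\eta^2}{2\pi}\int_0^{2\pi}f^2\,d\theta$, and is the one point where your route needs an extra verification that the paper's change of variable renders automatic. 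Otherwise the two proofs share the same engine: an explicit Fourier-in-$\theta$ solution of the constant-coefficient linear problem (the paper's $Q_0$ in (\ref{Q0}), solved in Appendix A, with decay $e^{\sqrt{|n|/(2u_e(1))}\,\psi}$ exactly as in your $\mu_n$), boundedness of the solution operator on weighted spaces $\langle\psi\rangle^l H^m$, and a contraction on a ball of radius $O(\eta)$. Your mode-by-mode resolvent bounds versus the paper's integration-by-parts energy estimates for $\mathcal{L}$ is a matter of taste; the weight propagation and the quadratic estimate on the nonlinearity close in either framework.
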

\begin{proof}

We use the contraction mapping theorem  to  prove the desired conclusions and divide the proof into five steps.

{\bf Step 1:} {\it Derivation of equivalent equation.}

Let $Q(\theta,\psi):=(\mathcal{U}^{(0)})^2(\theta,\psi)-u_{e}^2(1)$ and we rewrite (\ref{modified prandtl equation}) as
\begin{eqnarray}
\left \{
\begin {array}{ll}
Q_\theta=\mathcal{U}^{(0)}Q_{\psi\psi},\\[7pt]
Q(\theta,\psi)=Q(\theta+2\pi,\psi),\\[5pt]
Q\big|_{\psi=0}=\alpha^2+2\alpha\eta f(\theta)+\eta^2 f^2(\theta)-u_{e}^2(1),\ \ Q\big|_{\psi\rightarrow-\infty}=0.\label{modified prandtl equation-1}
\end{array}
\right.
\end{eqnarray}

Defining
\begin{align*}
\mathcal{G}(Q)=Q-2u_{e}(1)\sqrt{Q+u_e^2(1)}+2u_{e}^2(1),
\end{align*}
then (\ref{modified prandtl equation-1}) is equivalent to
\begin{eqnarray}
\left \{
\begin {array}{ll}
Q_\theta-u_{e}(1)Q_{\psi\psi}=(\mathcal{G}(Q))_\theta,\\[7pt]
Q(\theta,\psi)=Q(\theta+2\pi,\psi),\\[5pt]
Q\big|_{\psi=0}=\alpha^2+2\alpha\eta f(\theta)+\eta^2 f^2(\theta)-u_{e}^2(1),\ \ Q\big|_{\psi\rightarrow-\infty}=0.\label{modified prandtl equation-2}
\end{array}
\right.
\end{eqnarray}
Let $Q_0$ be the solution to
\begin{eqnarray}
\left \{
\begin {array}{ll}
(Q_0)_\theta=u_{e}(1)(Q_0)_{\psi\psi},\\[7pt]
Q_0(\theta,\psi)=Q_0(\theta+2\pi,\psi),\\[5pt]
Q_0\big|_{\psi=0}=\alpha^2+2\alpha\eta f(\theta)+\eta^2 f^2(\theta)-u_{e}^2(1),\ \ Q_0\big|_{\psi\rightarrow-\infty}=0,\label{Q0}
\end{array}
\right.
\end{eqnarray}
which will be solved in Appendix A,  then (\ref{modified prandtl equation-2}) is equivalent to
\begin{eqnarray}
\left \{
\begin {array}{ll}
Q_\theta-u_{e}(1)Q_{\psi\psi}=(\mathcal{H}(Q))_\theta,\\[7pt]
Q(\theta,\psi)=Q(\theta+2\pi,\psi),\\[5pt]
Q\big|_{\psi=0}=0,\ \ Q\big|_{\psi\rightarrow-\infty}=0,\label{modified prandtl equation-3}
\end{array}
\right.
\end{eqnarray}
where
\begin{align*}
\mathcal{H}(Q)=Q+Q_0-2u_{e}(1)\sqrt{Q+Q_0+u_e^2(1)}+2u_{e}^2(1).
\end{align*}

Defining the linear operator $\mathcal{L}:\Lambda\longmapsto \Phi$ such that
\begin{eqnarray}
\Phi=\mathcal{L} \Lambda\Longleftrightarrow\left \{
\begin {array}{ll}
\Phi_\theta-u_{e}(1)\Phi_{\psi\psi}=\Lambda_\theta,\\[7pt]
\Phi(\theta,\psi)=\Phi(\theta+2\pi,\psi),\\[5pt]
\Phi\big|_{\psi=0}=0,\ \ \Phi\big|_{\psi\rightarrow-\infty}=0,\label{definition L}
\end{array}
\right.
\end{eqnarray}
then (\ref{modified prandtl equation-3}) is equivalent to
\begin{eqnarray}
\left \{
\begin {array}{ll}
Q=\mathcal{L} (\mathcal{H}(Q)),\\[7pt]
Q(\theta,\psi)=Q(\theta+2\pi,\psi),\\[5pt]
Q\big|_{\psi=0}=0,\ \ Q\big|_{\psi=-\infty}=0.\label{operator form}
\end{array}
\right.
\end{eqnarray}

Defining the function space $X$ as follows
\begin{align*}
X=\bigg\{&Q:Q(\theta,\psi)=Q(\theta+2\pi,\psi),\ Q\big|_{\psi=0}=Q\big|_{\psi\rightarrow-\infty}=0,\\&
\quad \quad  \|Q\|_X^2:=\sum\limits_{j+k\leq m, l\geq 0}\int_{-\infty}^0\int_0^{2\pi}\big|\partial_\theta^j\partial_\psi^k Q\big|^2\big<\psi\big>^ld\theta d\psi<+\infty.\bigg\}
\end{align*}
and a ball $B_0$ in $X$
\begin{align*}
B_0=\big\{Q\in X: \|Q\|_X\leq r_0\big\},
\end{align*}
here $r_0$ is a small number which will be determined later, $m$ is a positive integer.

 In the next three steps,  we will verify that $\mathcal{L}\circ\mathcal{H}$ is a contraction map from $B_0$ to $B_0$ with a suitable small $r_0$.

{\bf Step 2:}{ \it  Boundedness of $\mathcal{L}$ in $X$}.

 In this step, we prove that for any $m\geq 0, l\geq 0$, there holds
\begin{align}\label{estimate for $L$}
\sum_{j+k\leq m}\big\|\partial^j_\theta\partial^k_\psi\Phi\big<\psi\big>^l\big\|_2\leq C(m,l)\sum_{j+k\leq m, q\leq l}\big\|\partial^j_\theta\partial^k_\psi\Lambda\big<\psi\big>^q\big\|_2.
\end{align}

First, we prove (\ref{estimate for $L$}) for $l=0$. Multiplying   the equation in (\ref{definition L}) by $\Phi$  and integrating with respect to $(\theta,\psi)\in(0,2\pi)\times(-\infty,0)$, we obtain
\begin{align}
u_{e}(1)\|\Phi_\psi\|_2\leq C(\lambda)\|\Lambda\|_2+\lambda\|\Phi_\theta\|_2.\label{iorder-derivative-1}
\end{align}
Then, multiplying   the equation in (\ref{definition L}) by $\Phi_\theta$  and integrating with respect to $(\theta,\psi)\in(0,2\pi)\times(-\infty,0)$, one has
\begin{align}
\|\Phi_\theta\|_2\leq C(\lambda)\|\Lambda_\theta\|_2+\lambda\|\Phi_\theta\|_2.\label{iorder-derivative-2}
\end{align}
Combining (\ref{iorder-derivative-1})-(\ref{iorder-derivative-2}) and choosing small $\lambda>0$ we get
\begin{align}
\|(\Phi_\psi,\Phi_\theta)\|_2\leq C\|(\Lambda,\Lambda_\theta)\|_2.\label{iorder-derivative}
\end{align}
Integrating (\ref{definition L}) with respect to $\theta\in(0,2\pi)$ gives
\begin{align*}
\frac{d^2}{d\psi^2}\int_0^{2\pi}\Phi (\theta,\psi)d\theta=0,
\end{align*}
which combines $\Phi|_{\psi=0}=\Phi|_{\psi\rightarrow-\infty}=0$ imply
\begin{align}
\int_0^{2\pi}\Phi (\theta,\psi)d\theta=0 .\label{zero average}
\end{align}
Due to  (\ref{zero average}) and the Poincar\'{e} inequality we have
\begin{align}
\|\Phi\|_2\leq C\|\Phi_\theta\|_2\leq C\|(\Lambda,\Lambda_\theta)\|_2.\label{L2}
\end{align}
For any $j\geq 0, k\geq 2$, from the equation (\ref{definition L}), we deduce that
\begin{align}
\|\partial_\theta^j\partial_\psi^k\Phi\|_2\leq C (\|\partial_\theta^j\partial_\psi^{k-2}\Phi_\theta\|_2+\|\partial_\theta^j\partial_\psi^{k-2}\Lambda_\theta\|_2).\label{higher-order-derivative}
\end{align}
For any $j\geq 0$, applying $\partial^j_\theta$ to the equation in (\ref{definition L}), multiplying the resultant equation by   $\partial^j_\theta\Phi$, integrating with respect to $(\theta,\psi)\in(0,2\pi)\times(-\infty,0)$ and using the Young inequality  one has
\begin{align}
\|\partial^j_\theta\partial_\psi\Phi\|_2\leq C\|\partial^{j+1}_\theta\Lambda\|_2+C\|\partial^j_\theta\Phi\|_2.\label{2order-derivative-3}
\end{align}
For any $j\geq 0$, multiplying the equation in (\ref{definition L}) by   $\partial^{2j-1}_\theta\Phi$, integrating with respect to $(\theta,\psi)\in(0,2\pi)\times(-\infty,0)$, one can get
\begin{align}
\|\partial^j_\theta\Phi\|^2_2=\int_{-\infty}^0\int_0^{2\pi}\partial^{2j-1}_\theta\Phi\Lambda_\theta d\theta d\psi\leq C\|\partial^j_\theta\Phi\|_2\|\partial^j_\theta\Lambda\|_2.\label{2order-derivative-2}
\end{align}
Combining (\ref{higher-order-derivative}), (\ref{2order-derivative-3}) and (\ref{2order-derivative-2}), we obtain that for any $m\geq 2$, there holds
\begin{align}
\sum_{j+k=m}\|\partial^j_\theta\partial^k_\psi\Phi\|_2\leq C\sum_{j+k\leq m-1}\|\partial^j_\theta\partial^k_\psi\Phi\|_2+C\sum_{j+k\leq m}\|\partial^j_\theta\partial^k_\psi\Lambda\|_2.\nonumber
\end{align}
Thus, by induction and (\ref{iorder-derivative}), (\ref{L2}), we deduce that for any $m\geq 0$, there holds
\begin{align}\label{estimate without weight}
\sum_{j+k\leq m}\|\partial^j_\theta\partial^k_\psi\Phi\|_2\leq C(m)\sum_{j+k\leq m}\|\partial^j_\theta\partial^k_\psi\Lambda\|_2.
\end{align}

Next, we prove (\ref{estimate for $L$}) for $m\leq1, l\geq 1$.
Multiplying the equation in (\ref{definition L}) by $\Phi\psi^{2l}$, integrating with respect to $(\theta,\psi)\in(0,2\pi)\times(-\infty,0)$, we have
\begin{align}
\int_{-\infty}^0\int_0^{2\pi}\Phi_\psi^2\psi^{2l}d\theta d\psi&\leq C\int_{-\infty}^0\int_0^{2\pi}\Phi^2\psi^{2(l-1)}d\theta d\psi
\nonumber\\&\quad+ C(\lambda)\int_{-\infty}^0\int_0^{2\pi}\Lambda^2\psi^{2l}d\theta d\psi+\lambda\int_{-\infty}^0\int_0^{2\pi}\Phi_\theta^2\psi^{2l}d\theta d\psi
.\label{iorder-high-weight-derivative-1}
\end{align}
Multiplying   the equation in (\ref{definition L}) by $\Phi_\theta\psi^{2l}$, integrating with respect to $(\theta,\psi)\in(0,2\pi)\times(-\infty,0)$, we obtain
\begin{align}
\int_{-\infty}^0\int_0^{2\pi}\Phi_\theta^2\psi^{2l}d\theta d\psi&\leq C\int_{-\infty}^0\int_0^{2\pi}\Phi_\psi^{2}\psi^{2(l-1)}d\theta d\psi
+C\int_{-\infty}^0\int_0^{2\pi}\Lambda_\theta^2\psi^{2l}d\theta d\psi.\label{iorder-high-weight-derivative-2}
\end{align}
Combining (\ref{iorder-high-weight-derivative-1})-(\ref{iorder-high-weight-derivative-2}), using the Poincar\'{e} inequality and choosing small $\lambda>0$ we get
\begin{align*}
&\int_{-\infty}^0\int_0^{2\pi}\big(\Phi^2+\Phi_\psi^2+\Phi_\theta^2\big)\psi^{2l}d\theta d\psi\nonumber\\
\leq&\int_{-\infty}^0\int_0^{2\pi}\big(\Phi^2+\Phi_\psi^2+\Phi_\theta^2\big)\psi^{2(l-1)}d\theta d\psi+ C\int_{-\infty}^0\int_0^{2\pi}\big(\Lambda^2\psi^{2l}+\Lambda_\theta^2\psi^{2l}\big)d\theta d\psi.
\end{align*}
By induction on $l$ and using (\ref{estimate without weight}), we deduce that for any $l\geq 1$, there holds
\begin{align}\label{estimate on lower order derivative with weight}
\big\|\big(\Phi_\psi,\Phi_\theta,\Phi\big)\psi^{l}\big\|_2\leq C(l)\sum_{q\leq l}\big\|\big(\Lambda,\Lambda_\theta\big)\psi^q\big\|_2.
\end{align}

Finally, we prove (\ref{estimate for $L$}) any $m\geq 2, l\geq 1$. When $j+k=m, k\geq 2$, from the equation (\ref{definition L}), we deduce that
\begin{align}
\Big\|\partial_\theta^j\partial_\psi^k\Phi \psi^l\Big\|_2\leq C \Big(\Big\|\partial_\theta^j\partial_\psi^{k-2}\partial_\theta\Phi\psi^l\Big\|_2
+\Big\|\partial_\theta^j\partial_\psi^{k-2}\partial_\theta\Lambda\psi^l\Big\|_2\Big).\label{higher-order-derivative-weight}
\end{align}
Applying $\partial^{m-1}_\theta$ to the equation in (\ref{definition L}), multiplying the resultant equation by   $\partial^{m-1}_\theta\Phi \psi^{2l}$, integrating with respect to $(\theta,\psi)\in(0,2\pi)\times(-\infty,0)$ and using the Young inequality  one has
\begin{align}
\Big\|\partial_\theta^{m-1}\partial_\psi\Phi \psi^l\Big\|_2\leq C\Big\|\partial_\theta^m\Lambda \psi^l\Big\|_2+C\Big\|\partial_\theta^{m-1}\Phi \psi^l\Big\|_2+C(l)\Big\|\partial_\theta^{m-1}\Phi \psi^{l-1}\Big\|_2.\label{2order-derivative-3-weight}
\end{align}
Multiplying the equation in (\ref{definition L}) by   $\partial^{2m-1}_\theta\Phi \psi^{2l}$, integrating with respect to $(\theta,\psi)\in(0,2\pi)\times(-\infty,0)$, one can get
\begin{align}
\Big\|\partial^m_\theta\Phi \psi^l\Big\|_2\leq C\Big\|\partial^m_\theta\Lambda  \psi^l \Big\|_2+C(l)\Big\|\partial_\theta^{m-1}\partial_\psi\Phi \psi^{l-1}\Big\|_2.\label{2order-derivative-2-weight}
\end{align}
Combining (\ref{higher-order-derivative-weight}), (\ref{2order-derivative-3-weight}) and (\ref{2order-derivative-2-weight}), we obtain that for any $m\geq 2, l\geq 1$, there holds
\begin{align}
\sum_{j+k=m}\Big\|\partial^j_\theta\partial^k_\psi\Phi \psi^l\Big\|_2\leq C(m,l)\sum_{j+k\leq m-1, q\leq l}\Big\|\partial^j_\theta\partial^k_\psi\Phi \psi^q\Big\|_2+C(m,l)\sum_{j+k\leq m}\Big\|\partial^j_\theta\partial^k_\psi\Lambda \psi^l\Big\|_2.\nonumber
\end{align}
Thus, by induction $m$ and using (\ref{estimate on lower order derivative with weight}), we deduce that for any $m\geq 2, l\geq 1$, there holds
\begin{align}\label{estimate with weight}
\sum_{j+k\leq m}\Big\|\partial^j_\theta\partial^k_\psi\Phi \psi^l\Big\|_2\leq C(m,l)\sum_{j+k\leq m,q\leq l}\Big\|\partial^j_\theta\partial^k_\psi\Lambda \psi^q\Big\|_2.
\end{align}
This complete the proof of (\ref{estimate for $L$}).
Thus, we obtain
\begin{align}
\|\mathcal{L}\Lambda\|_{X}\leq C \|\Lambda\|_{X},\ \ \forall \Lambda\in X.\label{boundedness}
\end{align}

{\bf Step 3:}{ \it $\mathcal{L}\circ\mathcal{H}$ is a continuous map from $B_0$ to $B_0$.} 

In this section, we first prove that for any $m\geq 2, l\geq 0$, there holds
\begin{align}\label{nonlinear estimate on H}
\sum_{j+k\leq m}\Big\|\partial_\theta^j\partial_\psi^k \mathcal{H}(Q)\big<\psi\big>^l\Big\|_2\leq C(m,l)\Big(\sum_{j+k\leq m}\Big\|\partial_\theta^j\partial_\psi^k Q\big<\psi\big>^l\Big\|_2+\sum_{j+k\leq m}\Big\|\partial_\theta^j\partial_\psi^k Q_0\big<\psi\big>^l\Big\|_2\Big)^2.
\end{align}

Set
\beno
\tilde{H}(x)=x-2u_e(1)\sqrt{x+u_e^2(1)}+2u^2_e(1), |x|\ll u_e^2(1),
\eeno
then $\mathcal{H}(Q)=\tilde{H}(Q+Q_0).$ Direct computation gives
\begin{align}\label{estimate on H}
|\tilde{H}'(x)|\leq C|x|, \quad |\tilde{H}^{(k)}(x)|\leq C,\quad k\geq 2.
\end{align}
Since
\begin{align*}
\mathcal{H}(Q)=\big(\sqrt{Q+Q_0+u_e^2(1)}-u_e(1)\big)^2=\bigg(\frac{Q+Q_0}{\sqrt{Q+Q_0+u_e^2(1)}+u_e(1)}\bigg)^2,
\end{align*}
it's easy to get
\begin{align*}
\mathcal{H}^2(Q)\big<\psi\big>^l\leq C\|Q+Q_0\|_{L^\infty}^2\big(Q^2\big<\psi\big>^l
+Q_0^2\big<\psi\big>^l\big), \quad l\geq 0.
\end{align*}
Using (\ref{estimate on H}), we deduce that
\begin{align*}
\big(\partial_\psi\mathcal{H}(Q)\big)^2\big<\psi\big>^l\leq& C\|Q+Q_0\|_{L^\infty}^2
\big(Q_\psi^2\big<\psi\big>^l+(Q_0)_\psi^2\big<\psi\big>^l\big),\quad l\geq 0,\\
\big(\partial_\theta\mathcal{H}(Q)\big)^2\big<\psi\big>^l\leq& C\|Q+Q_0\|_{L^\infty}^2
\big(Q_\theta^2\big<\psi\big>^l+(Q_0)_\theta^2\big<\psi\big>^l\big),\quad l\geq 0.
\end{align*}
Thus, we obtain
\begin{align}\label{lower order estimate on H}
&\sum_{j+k\leq 1}\Big\|\partial_\theta^j\partial_\psi^k \mathcal{H}(Q)\big<\psi\big>^l\Big\|_2\nonumber\\
\leq& C\|Q+Q_0\|_{L^\infty}\Big(\sum_{j+k\leq 1}\Big\|\partial_\theta^j\partial_\psi^k Q\big<\psi\big>^l\Big\|_2+\sum_{j+k\leq 1}\Big\|\partial_\theta^j\partial_\psi^k Q_0\big<\psi\big>^l\Big\|_2\Big).
\end{align}
Since
\begin{align*}
\partial_{\psi\theta}\mathcal{H}(Q)=\tilde{H}'(Q+Q_0)(\partial_{\psi\theta}Q+\partial_{\psi\theta}Q_0)
+\tilde{H}''(Q+Q_0)(\partial_{\theta}Q+\partial_{\theta}Q_0)(\partial_{\psi}Q+\partial_{\psi}Q_0),
\end{align*}
thus, using (\ref{estimate on H}), we obtain
\begin{align*}
\Big\|\partial_{\psi\theta}\mathcal{H}(Q)\big<\psi\big>^l\Big\|_{L^2}&\leq C\|Q+Q_0\|_{L^\infty}\Big(\Big\|Q_{\psi\theta}\big<\psi\big>^l\Big\|_{L^2}+\Big\|(Q_0)_{\psi\theta}\big<\psi\big>^l\Big\|_{L^2}\Big)
\nonumber\\&\quad+C\big\|Q_{\psi}
+(Q_0)_{\psi}\big\|_{L^4}\big\|(Q_{\theta}+(Q_0)_{\theta})\big<\psi\big>^l\big\|_{L^4}
\nonumber\\&\leq C\|Q+Q_0\|_{L^\infty}\Big(\Big\|Q_{\psi\theta}\big<\psi\big>^l\Big\|_{L^2}+\Big\|(Q_0)_{\psi\theta}\big<\psi\big>^l\Big\|_{L^2}\Big)
\nonumber\\&\quad+C\big\|Q_{\psi}
+(Q_0)_{\psi}\big\|_{H^1}\big\|(Q_{\theta}+(Q_0)_{\theta})\big<\psi\big>^l\big\|_{H^1}.
\end{align*}
Hence by the Sobolev embedding, we deduce that
\begin{align*}
\Big\|\partial_{\psi\theta}\mathcal{H}(Q)\big<\psi\big>^l\Big\|_2\leq C\Big(\sum_{j+k\leq 2}\Big\|\partial_\theta^j\partial_\psi^k Q\big<\psi\big>^l\Big\|_2+\sum_{j+k\leq 2}\Big\|\partial_\theta^j\partial_\psi^k Q_0\big<\psi\big>^l\Big\|_2\Big)^2.
\end{align*}
Same estimates hold for $\Big\|\partial_{\theta\theta}\mathcal{H}(Q)\big<\psi\big>^l\Big\|_2$ and $\Big\|\partial_{\psi\psi}\mathcal{H}(Q)\big<\psi\big>^l\Big\|_2$. Thus, combing the estimate (\ref{lower order estimate on H}), we arrive at
\begin{align*}
&\sum_{j+k\leq 2}\Big\|\partial_\theta^j\partial_\psi^k \mathcal{H}(Q)\big<\psi\big>^l\Big\|_2\\
\leq& C\Big(\sum_{j+k\leq 2}\Big\|\partial_\theta^j\partial_\psi^k Q\big<\psi\big>^l\Big\|_2+\sum_{j+k\leq 2}\Big\|\partial_\theta^j\partial_\psi^k Q_0\big<\psi\big>^l\Big\|_2\Big)^2.
\end{align*}
Using (\ref{estimate on H}) and repeat the above argument, we obtain (\ref{nonlinear estimate on H}).

Consequently,  if we take $r_0=\|Q_0\|_{X}$ and $\eta$ small enough, then
\begin{align*}
\|\mathcal{L}\mathcal{H}(Q)\|_{X}\leq C\|\mathcal{H}(Q)\|_{X}\leq 4C r_0^2\leq 4C\eta r_0\leq r_0,\ \ \forall Q\in B_0,
\end{align*}
here we have used (\ref{norm of q0}). Thus, $\mathcal{L}\mathcal{H}$ is a continuous map from $B_0$ to $B_0$ for small $\eta$.\\

{\bf Step 4:}{ \it $\mathcal{L}\circ\mathcal{H}$ is a contraction map in $B_0$.}

Noting firstly that
\begin{align*}
\mathcal{H}(Q_1)-\mathcal{H}(Q_2)&=(Q_1-Q_2)\bigg(1-\frac{2u_e(1)}{\sqrt{Q_1+Q_0+u_e^2(1)}+\sqrt{Q_2+Q_0+u_e^2(1)}}\bigg)
\end{align*}
and
\begin{align*}
&1-\frac{2u_e(1)}{\sqrt{Q_1+Q_0+u_e^2(1)}+\sqrt{Q_2+Q_0+u_e^2(1)}}
\nonumber\\&=\frac{Q_1+Q_0}{\big(\sqrt{Q_1+Q_0+u_e^2(1)}+\sqrt{Q_2+Q_0+u_e^2(1)}\big)\big(\sqrt{Q_1+Q_0+u_e^2(1)}+u_e(1)\big)}
\nonumber\\&\quad+\frac{Q_2+Q_0}{\big(\sqrt{Q_1+Q_0+u_e^2(1)}+\sqrt{Q_2+Q_0+u_e^2(1)}\big)\big(\sqrt{Q_2+Q_0+u_e^2(1)}+u_e(1)\big)}.
\end{align*}

With the help of the similar arguments in Step 3, we can obtain that there exist $\eta_0>0$ such that for any $\eta\in (0,\eta_0)$, there holds
\begin{align*}
\big\|\mathcal{L}\mathcal{H}(Q_1)-\mathcal{L}\mathcal{H}(Q_2)\big\|_X
&\leq C\big\|\mathcal{H}(Q_1)-\mathcal{H}(Q_2)\big\|_X
\\[5pt]&\leq C\big\|Q_1-Q_2\big\|_X\big(\|Q_0\|_X+\|Q_1\|_X+\|Q_2\|_X\big)
\\[5pt]&\leq C\|Q_0\|_X\big\|Q_1-Q_2\big\|_X
\\[5pt]&\leq\frac{1}{2}\big\|Q_1-Q_2\big\|_X,
\end{align*}
that is, $\mathcal{L}\mathcal{H}$ is a contraction map in $B_0$.

{\bf Step 5:} { \it Existence and uniqueness of the equation (\ref{modified prandtl equation}).}
 
 By the standard contraction mapping principle, we know that there exists $\eta_0>0$ such that for any $\eta\in (0,\eta_0)$ and any $m,l\in \mathbb{N}\cup \{0\}$, the equation (\ref{modified prandtl equation}) has a unique solution $\mathcal{U}^{(0)}$ which satisfies
\begin{align*}
\sum_{j+k\leq m}\int_{-\infty}^0\int_0^{2\pi}\Big|\partial_\theta^j\partial_\psi^k \big(\mathcal{U}^{(0)}-u_e(1)\big)\Big|^2\big<\psi\big>^{2l}d\theta d\psi\leq C(m,l)\eta^2,
\end{align*}
this completes the proof of this proposition.
\end{proof}

Return to the equations (\ref{prandtl problem 1}), we have the following result.
\begin{Corollary} There exists $\eta_0>0$ such that for any $\eta\in(0,\eta_0)$, equations (\ref{prandtl problem 1}) have a unique solution $(u_p^{(0)},v_p^{(1)})$ which satisfies
\begin{align}\label{decay behavior-prandtl}
\sum_{j+k\leq m}\int_{-\infty}^0\int_0^{2\pi}\Big|\partial_\theta^j\partial_Y^k (u_p^{(0)},v_p^{(1)})\Big|^2\big<Y\big>^{2l}d\theta dY\leq C(m,l)\eta^2, \ m,l \geq 0.
\end{align}
\end{Corollary}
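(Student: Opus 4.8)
The plan is to recover $(u_p^{(0)},v_p^{(1)})$ from the von Mises unknown $\mathcal{U}^{(0)}$ of Proposition \ref{decay estimates} by inverting the change of variables used in Lemma \ref{BW}, and then to transport the weighted estimates. Taking $m$ large in Proposition \ref{decay estimates} and using Sobolev embedding gives $\|\mathcal{U}^{(0)}-u_e(1)\|_{L^\infty}\leq C\eta$, so $\mathcal{U}^{(0)}>0$ once $\eta$ is small. Hence for each fixed $\theta$ the map
\[
Y(\theta,\psi):=\int_0^\psi\frac{d\sigma}{\mathcal{U}^{(0)}(\theta,\sigma)}
\]
is a smooth strictly increasing bijection of $(-\infty,0]$ onto itself, with $|Y|$ comparable to $|\psi|$ uniformly in $\theta$ (since $\mathcal{U}^{(0)}$ is bounded above and below). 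Denoting its inverse by $\psi(\theta,Y)$, I set $u_p^{(0)}(\theta,Y):=\mathcal{U}^{(0)}(\theta,\psi(\theta,Y))-u_e(1)$ and $v_p^{(1)}(\theta,Y):=-\int_{-\infty}^{Y}\partial_\theta u_p^{(0)}(\theta,z)\,dz$. Reversing the computation in the proof of Lemma \ref{BW}, one checks that $(u_p^{(0)},v_p^{(1)})$ solves (\ref{prandtl problem 1}); the boundary value at $Y=0$ and the $2\pi$-periodicity are inherited from (\ref{modified prandtl equation}) because $\psi=0\Leftrightarrow Y=0$, and $-v_p^{(1)}|_{Y=0}$ is exactly the datum appearing in (\ref{outer-1 order-bc}).

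The main work is the estimate (\ref{decay behavior-prandtl}). Since $|Y|\sim|\psi|$ we have $\langle Y\rangle\sim\langle\psi\rangle$ uniformly in $\theta$, and the Jacobian $\partial_\psi Y=1/\mathcal{U}^{(0)}$ of the change of variables is bounded above and below; it therefore suffices to bound $\partial_\theta^j\partial_Y^k u_p^{(0)}$ in the weighted $L^2$ norm by the corresponding quantities for $\mathcal{U}^{(0)}-u_e(1)$ in the $(\theta,\psi)$ variables, which are $O(\eta)$ by Proposition \ref{decay estimates}. Writing $\partial_Y=\mathcal{U}^{(0)}\partial_\psi$ and $\partial_\theta|_Y=\partial_\theta|_\psi+(\partial_\theta\psi)\partial_\psi$, and differentiating the identity $Y(\theta,\psi(\theta,Y))=Y$ to get
\[
\partial_\theta\psi(\theta,Y)=\mathcal{U}^{(0)}(\theta,\psi)\int_0^{\psi}\frac{\partial_\theta\mathcal{U}^{(0)}(\theta,\sigma)}{\big(\mathcal{U}^{(0)}(\theta,\sigma)\big)^2}\,d\sigma,
\]
one sees that $\partial_\theta\psi$ and all higher $(\theta,Y)$-derivatives of $\psi$ are polynomial expressions in $\mathcal{U}^{(0)}$ and in $(\theta,\psi)$-derivatives of $\mathcal{U}^{(0)}-u_e(1)$ composed with $\psi(\theta,Y)$, each of which is bounded with rapid decay in $\psi$ (for instance $|\partial_\theta\psi|\leq C\int_{-\infty}^0|\partial_\theta\mathcal{U}^{(0)}|\,d\sigma\leq C\eta$ by Cauchy--Schwarz against a weight). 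An induction based on the chain rule then expands $\partial_\theta^j\partial_Y^k u_p^{(0)}$ into a finite sum of products of such factors; changing variables back to $\psi$ in the integral and using that the weighted spaces of Proposition \ref{decay estimates} are stable under multiplication by bounded functions yields the bound $C(m,l)\eta^2$ for the $u_p^{(0)}$ part of (\ref{decay behavior-prandtl}).

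For $v_p^{(1)}$ I use the defining primitive: $\partial_Y^k v_p^{(1)}=-\partial_Y^{k-1}\partial_\theta u_p^{(0)}$ for $k\geq1$, while for $k=0$ and any $j$ a Hardy-type inequality on $(-\infty,Y)$ gives $\|\langle Y\rangle^{l}\partial_\theta^{j}v_p^{(1)}\|_{2}\leq C(l)\|\langle z\rangle^{l+2}\partial_\theta^{j+1}u_p^{(0)}\|_{2}$, so $v_p^{(1)}$ satisfies (\ref{decay behavior-prandtl}) too, at the harmless cost of a fixed number of weight powers. Finally, uniqueness in the class (\ref{decay behavior-prandtl}) follows because any such solution of (\ref{prandtl problem 1}) has $u_e(1)+u_p^{(0)}>0$ by Sobolev embedding, so the forward von Mises transform of Lemma \ref{BW} applies and produces a solution of (\ref{modified prandtl equation}) lying in the ball $B_0$, which by the uniqueness statement of Proposition \ref{decay estimates} must equal $\mathcal{U}^{(0)}$; hence $(u_p^{(0)},v_p^{(1)})$ coincides with the one constructed above.

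I expect the principal difficulty to lie in the bookkeeping of the second paragraph: propagating arbitrarily many polynomially weighted $L^2$ bounds through the nonlinear, $\theta$-dependent inverse change of variables, and in particular controlling all $(\theta,Y)$-derivatives of $\psi(\theta,Y)$ uniformly in $\theta$. Everything else is either inherited verbatim from Proposition \ref{decay estimates} or is a routine Hardy estimate.
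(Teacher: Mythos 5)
Your proposal is correct and follows exactly the route the paper intends: the Corollary is obtained from Proposition \ref{decay estimates} by inverting the von Mises transformation of Lemma \ref{BW}, defining $v_p^{(1)}=-\int_{-\infty}^{Y}\partial_\theta u_p^{(0)}\,dz$ (which is precisely the identity the paper records immediately after the Corollary), and transporting the weighted $L^2$ bounds through the change of variables using $|Y|\sim|\psi|$ and the boundedness of $\mathcal{U}^{(0)}$. The paper leaves these details implicit, and your write-up simply supplies them.
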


Notice that
$$v^{(1)}_p(\theta,Y)=\int_{-\infty}^Y\partial_Yv^{(1)}_{p}(\theta,Y')dY'=-\int_{-\infty}^Y\partial_\theta u^{(0)}_{p}(\theta,Y')dY',$$
we have
\begin{align*}
\int_{0}^{2\pi}v^{(1)}_p(\theta, Y)d\theta=0, \ \forall\ Y\leq 0.
\end{align*}
Finally, solving (\ref{equation of first pressure}), we obtain $p_p^{(1)}(\theta,Y)$  which decay very fast as $Y\rightarrow -\infty$.

\begin{Remark}
Kim established the well-posedness of Prandtl equations (\ref{prandtl problem near 1}) in \cite{K2000}. He first rewrote the equations in an equivalent integral form, then constructed a sequence of  approximate solutions by Picard iteration and studied the structure of approximate solution sequence by infinite series expansion, finally proved the convergence of approximate solution sequence in some suitable space. In this paper, we use a different approach. Like Kim, we also first rewrite the equations in an equivalent integral form, then we establish a priori estimate in a different space by energy method, and the well-posedness of Prandtl equations (\ref{prandtl problem near 1}) can be obtained by contraction mapping principle directly. The flows constructed above have non-monotone velocity profile. It was shown by Renardy that solutions of steady Prandtl equations with monotone profile do not exist.
\end{Remark}
\subsubsection{Linearized Euler equations for $(u_e^{(1)}, v_e^{(1)}, p_e^{(1)})$ and their solvabilities}
\begin{Proposition}\label{solvability of first Euler}
The linearized Euler equations (\ref{outer-1 order equation}) have a solution $(u_e^{(1)}, v_e^{(1)}, p_e^{(1)}))$ which satisfies
\begin{align}
|\partial_\theta u_e^{(1)}+v_e^{(1)}|(\theta,r)\leq& C\eta r, \ |\partial_\theta v_e^{(1)}-u_e^{(1)}|(\theta,r)\leq C\eta r,  \ \forall (\theta, r)\in \Omega, \label{Estimate of first combined linearized Euler equation}\\[5pt]
 \|\partial^k_\theta\partial^j_r(u_e^{(1)},v_e^{(1)})\|_2\leq& C(k,j)\eta, \quad \forall j,k\geq 0,\label{Estimate of first linearized Euler equation}\\[5pt]
 r^2\triangle u_e^{(1)}-u_e^{(1)}+2\partial_\theta& v_e^{(1)}=0,\quad \int_{0}^{2\pi}v_e^{(1)}(\theta,r)d\theta=0,\label{identity for first Euler}
\end{align}
here and below, $\Delta=\partial_{rr}+\frac{\partial_r}{r}+\frac{\partial_{\theta\theta}}{r^2}.$
\end{Proposition}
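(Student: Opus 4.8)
The plan is to solve the linearized system \eqref{outer-1 order equation}--\eqref{outer-1 order-bc} explicitly by means of a stream function together with a Fourier expansion in $\theta$, selecting the particular solution whose linearized vorticity vanishes identically; this choice is precisely what makes \eqref{identity for first Euler} immediate. First I would introduce $\phi(\theta,r)$ with $u_e^{(1)}=-\partial_r\phi$ and $v_e^{(1)}=r^{-1}\partial_\theta\phi$, which automatically satisfies the third equation of \eqref{outer-1 order equation}. Eliminating $p_e^{(1)}$ from the first two equations (i.e. taking the curl) and using the third one, a short computation gives $a\,\partial_\theta\omega_e^{(1)}=0$, where $\omega_e^{(1)}:=-\Delta\phi=\partial_r u_e^{(1)}+r^{-1}u_e^{(1)}-r^{-1}\partial_\theta v_e^{(1)}$; hence $\Delta\phi$ is a function of $r$ alone. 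Writing $b_n:=\frac1{2\pi}\int_0^{2\pi}v_p^{(1)}(\theta,0)e^{-in\theta}\,d\theta$, the boundary condition \eqref{outer-1 order-bc} reads $r^{-1}\partial_\theta\phi|_{r=1}=-v_p^{(1)}|_{Y=0}$, and since $b_0=\frac1{2\pi}\int_0^{2\pi}v_p^{(1)}(\theta,0)\,d\theta=0$ this integrates in $\theta$ to a single-valued trace $\phi|_{r=1}$ (its additive constant being irrelevant).

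I would then select the solution with $\Delta\phi\equiv0$. Expanding $\phi=\sum_n\hat\phi_n(r)e^{in\theta}$, for $n\neq0$ the solution of $\hat\phi_n''+r^{-1}\hat\phi_n'-n^2r^{-2}\hat\phi_n=0$ that is regular at $r=0$ is $\hat\phi_n=c_n r^{|n|}$, with $c_n$ fixed by the boundary condition through $in\,c_n=-b_n$; for $n=0$, regularity at $r=0$ together with $\hat\phi_0(1)=0$ forces $\hat\phi_0\equiv0$. This yields $\hat u_n=-|n|c_n r^{|n|-1}$, $\hat v_n=in\,c_n r^{|n|-1}$ for $n\neq0$ and $\hat u_0=\hat v_0=0$; in particular $\int_0^{2\pi}v_e^{(1)}\,d\theta=0$. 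The pressure $p_e^{(1)}$ is then recovered by integrating the first two equations of \eqref{outer-1 order equation}, the exactness of the associated $1$-form being exactly the curl relation already used, and the $\theta$-average of $\partial_\theta p_e^{(1)}$ vanishing since $\hat v_0\equiv0$.

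For the estimates the essential input is that the trace $v_p^{(1)}(\cdot,0)$ is controlled in every $H^s$ in $\theta$: combining the weighted bounds \eqref{decay behavior-prandtl} with the one-dimensional trace inequality in $Y$ gives $\|v_p^{(1)}(\cdot,0)\|_{H^s_\theta}\le C(s)\eta$, equivalently $\sum_n\langle n\rangle^{2s}|b_n|^2\le C(s)\eta^2$ for all $s\ge0$. Since $|c_n|=|b_n|/|n|$ for $n\neq0$ and $r^{|n|-1}\le1$ on $(0,1]$, differentiating the explicit (monomial in $r$) expressions for $\hat u_n,\hat v_n$ and using Parseval gives $\|\partial_\theta^k\partial_r^j(u_e^{(1)},v_e^{(1)})\|_2\le C(k,j)\big(\sum_n\langle n\rangle^{2(k+j)}|b_n|^2\big)^{1/2}\le C(k,j)\eta$, i.e. \eqref{Estimate of first linearized Euler equation}. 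For the pointwise bounds \eqref{Estimate of first combined linearized Euler equation} I would use the two algebraic identities $\partial_\theta u_e^{(1)}+v_e^{(1)}=-r\,\partial_r v_e^{(1)}$ (the third equation of \eqref{outer-1 order equation}) and $\partial_\theta v_e^{(1)}-u_e^{(1)}=r\,\partial_r u_e^{(1)}$ (from $\omega_e^{(1)}\equiv0$), so that \eqref{Estimate of first combined linearized Euler equation} reduces to $\|\partial_r u_e^{(1)}\|_{L^\infty}+\|\partial_r v_e^{(1)}\|_{L^\infty}\le C\eta$, which follows from \eqref{Estimate of first linearized Euler equation} with $k,j$ large and Sobolev embedding on the disk. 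Finally, for \eqref{identity for first Euler}: from $\Delta\phi\equiv0$, $\partial_r\phi=-u_e^{(1)}$, $\partial_{\theta\theta}\phi=\partial_\theta(r v_e^{(1)})=r\,\partial_\theta v_e^{(1)}$ and the commutator identity $[\Delta,\partial_r]\phi=r^{-2}\partial_r\phi+2r^{-3}\partial_{\theta\theta}\phi$ one gets $-\Delta u_e^{(1)}=\Delta(\partial_r\phi)=r^{-2}\big(-u_e^{(1)}+2\partial_\theta v_e^{(1)}\big)$, i.e. $r^2\Delta u_e^{(1)}-u_e^{(1)}+2\partial_\theta v_e^{(1)}=0$; and $\int_0^{2\pi}v_e^{(1)}\,d\theta=2\pi\hat v_0=0$ was already observed.

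The construction being essentially explicit, there is no genuinely hard analytic step; the points that need care are: (i) checking that the Fourier-mode solution assembles into a velocity field smooth up to and across the coordinate singularity $r=0$ --- this is exactly why only the $r^{|n|}$ branch is admissible and why $\hat\phi_0\equiv0$; (ii) the weighted-$L^2$/trace bookkeeping that upgrades \eqref{decay behavior-prandtl} to $\|v_p^{(1)}(\cdot,0)\|_{H^s_\theta}\le C(s)\eta$ uniformly in $s$, which is what produces the $C(k,j)\eta$ constants in \eqref{Estimate of first linearized Euler equation}; and (iii) recognising the two algebraic identities above together with the cancellations at $|n|\le1$, which are what make the extra factor $r$ in \eqref{Estimate of first combined linearized Euler equation} possible.
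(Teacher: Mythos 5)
Your proposal is correct and follows essentially the same route as the paper: eliminate the pressure, observe that the linearized vorticity can be taken independent of $\theta$ (the paper writes this as $\triangle(rv_e^{(1)})=0$, you as $\Delta\phi\equiv 0$), solve mode-by-mode with the regular branch $r^{|n|}$, and use the rapid decay of the Fourier coefficients of $v_p^{(1)}(\cdot,0)$ inherited from \eqref{decay behavior-prandtl}. The only cosmetic differences are the stream-function packaging and that you obtain \eqref{Estimate of first combined linearized Euler equation} from the identities $\partial_\theta u_e^{(1)}+v_e^{(1)}=-r\partial_r v_e^{(1)}$ and $\partial_\theta v_e^{(1)}-u_e^{(1)}=r\partial_r u_e^{(1)}$ rather than by summing the explicit series with the $(1-n)r^{n-1}$ factors as the paper does; both give the same cancellation at $|n|=1$ and the same factor of $r$.
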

\begin{proof}
 Eliminating the pressure $p_e^{(1)}$ in the equation (\ref{outer-1 order equation}), we obtain the following equation for $rv_e^{(1)}$ in $\Omega$
\begin{eqnarray}\label{equation for first Euler normal}
\left \{
\begin {array}{ll}
-r\triangle(rv_e^{(1)})=0,\\[5pt]
rv_e^{(1)}|_{r=1}=-v_p^{(1)}|_{Y=0}.
\end{array}
\right.
\end{eqnarray}
Since $\int_{0}^{2\pi}v^{(1)}_pd\theta=0$, we can assume
\begin{align*}
-v_p^{(1)}(\theta,0)=\sum_{n=1}^{+\infty}[a_{n1} \cos(n\theta)+b_{n1}\sin(n\theta)].
\end{align*}
By (\ref{decay behavior-prandtl}), we deduce that
\begin{align}\label{deacy of Fourier coff}
|a_{n1}|+|b_{n1}|\leq C\frac{\eta}{n^k}, \ \forall k\geq 0.
\end{align}
  It's easy to justify that
\begin{align*}
v_e^{(1)}(\theta,r)=\sum_{n=1}^{+\infty}[a_{n1} r^{n-1} \cos(n\theta)+b_{n1}r^{n-1}\sin(n\theta)]
\end{align*}
solves the equation (\ref{equation for first Euler normal}).
Set
\begin{align*}
u_e^{(1)}(\theta, r)=\sum_{n=1}^{+\infty}[-a_{n1} r^{n-1} \sin(n\theta)+b_{n1}r^{n-1}\cos(n\theta)],
\end{align*}
then
\begin{align*}
\partial_\theta u_e^{(1)}+\partial_r(r v_e^{(1)})&=0, \   r^2\triangle u_e^{(1)}-u_e^{(1)}+2\partial_\theta v_e^{(1)}=0 , \\[5pt]
\|\partial^k_\theta\partial^j_r(u_e^{(1)},v_e^{(1)})\|_2&\leq C(k,j)\eta, \quad \forall k,j\geq 0,
\end{align*}
which give (\ref{Estimate of first linearized Euler equation}) and (\ref{identity for first Euler}).

Moreover, there hold
\begin{align*}
\partial_\theta u_e^{(1)}(\theta,r)+v_e^{(1)}(\theta,r)&=\sum_{n=2}^{+\infty}[(1-n)a_{n1} r^{n-1} \cos(n\theta)+(1-n)b_{n1}r^{n-1}\sin(n\theta)], \\
\partial_\theta v_e^{(1)}(\theta,r)-u_e^{(1)}(\theta,r)&=\sum_{n=2}^{+\infty}[(1-n)a_{n1} r^{n-1} \sin(n\theta)+(n-1)b_{n1}r^{n-1}\cos(n\theta)].
\end{align*}
Thus, we obtain (\ref{Estimate of first combined linearized Euler equation}) by using (\ref{deacy of Fourier coff}).
After obtaining $(u_e^{(1)}, v_e^{(1)})$, we construct $p_e^{(1)}$ as following
\beno
p_e^{(1)}(\theta,r):=\phi(r)-\int_0^\theta[u_e(r)\partial_{\theta'} u_e^{(1)}+ru'_e v_e^{(1)}+u_ev_e^{(1)}](\theta',r)d\theta',
\eeno
where $\phi(r)$ is a function which satisfies
\beno
r\partial_r\phi(r)+u_e(r)\partial_\theta v_e^{(1)}(0,r)-2u_e(r)u_e^{(1)}(0,r)=0.
\eeno
Combining the equation of $(u_e^{(1)}, v_e^{(1)})$, it's direct to obtain
\beno
u_e \partial_\theta v_e^{(1)}-2u_eu_e^{(1)}+r\partial_rp_e^{(1)}=0.
\eeno
Hence, $(u_e^{(1)}, v_e^{(1)},p_e^{(1)})$ solves the equations (\ref{outer-1 order equation}).
\end{proof}

\subsubsection{Linearized Prandtl equations for  $(u_p^{(1)},v_p^{(2)})$ and their solvabilities}
\indent

In this subsubsection, we consider the solvability of linearized Prandtl equations (\ref{first linearized prandtl problem near 1})

\begin{Proposition}\label{decay estimates of linearized Prandtl} There exists $\eta_0>0$ such that for any $\eta\in(0,\eta_0)$, equations (\ref{first linearized prandtl problem near 1}) have a unique solution $(u_p^{(1)},v_p^{(2)})$ which satisfies
\begin{align}
&\sum_{j+k\leq m}\int_{-\infty}^0\int_0^{2\pi}\big|\partial_\theta^j\partial_Y^k \big(u_p^{(1)}-A_{1\infty},v_p^{(2)}\big)\big|^2\big<Y\big>^{2l}d\theta dY\leq C(m,l)\eta^2, \ \ m, l\geq 0; \nonumber\\
& \int_0^{2\pi}v_p^{(2)}(\theta,Y)d\theta =0, \ \forall \ Y\leq 0,  \label{decay behavior-prandtl-1}
\end{align}
where
$A_{1\infty}:=\lim_{Y\rightarrow -\infty}u_p^{(1)}(\theta,Y)$ is a constant which satisfies $|A_{1\infty}|\leq C\eta.$
\end{Proposition}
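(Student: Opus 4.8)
The plan is to treat \eqref{first linearized prandtl problem near 1} as a \emph{linear} problem, paralleling the treatment of the nonlinear Prandtl system in Proposition \ref{decay estimates}. Every coefficient in it --- the base tangential velocity $b:=u_e(1)+u_p^{(0)}$, the base normal velocity $V:=v_e^{(1)}(\theta,1)+v_p^{(1)}$, the factors $\partial_Y u_p^{(0)}$ and $\partial_\theta u_p^{(0)}$, and the forcing $f_1$ --- has already been constructed, and by the Corollary following Proposition \ref{decay estimates} and by Proposition \ref{solvability of first Euler} these are all $O(\eta)$ in the weighted norms, with fast $Y$-decay for the non-constant pieces. I would first eliminate $v_p^{(2)}$ and close the curvature term $(v_e^{(2)}(\theta,1)+v_p^{(2)})\partial_Y u_p^{(0)}$: since $v_e^{(2)}$ is constrained only through the matching $v_e^{(2)}(\theta,1)=-v_p^{(2)}(\theta,0)$, this factor equals $v_p^{(2)}(\theta,Y)-v_p^{(2)}(\theta,0)=\int_0^Y\partial_{Y'}v_p^{(2)}\,dY'$, and the divergence relation $\partial_\theta u_p^{(1)}+\partial_Y v_p^{(2)}+\partial_Y(Yv_p^{(1)})=0$ rewrites it entirely in terms of $u_p^{(1)}$. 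The system thus reduces to a single scalar equation for $g:=u_p^{(1)}$, and I would apply the von Mises change of variables $\psi=\int_0^Y b(\theta,z)\,dz$ of the base flow --- exactly the substitution used in Lemma \ref{BW} and Proposition \ref{decay estimates}. Using the equation satisfied by $(b,V)$, this converts the principal part $b\partial_\theta g+V\partial_Y g-\partial_{YY}g$ into $\partial_\theta\widetilde{g}-u_e(1)\partial_{\psi\psi}(\cdots)$ plus lower-order terms whose coefficients are $O(\eta)$ and fast-decaying.

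At that point the equation has the structure handled in Step 2 of the proof of Proposition \ref{decay estimates}: writing it as $\widetilde{g}=\mathcal{L}\big(\text{(small linear perturbation)}(\widetilde{g})+\text{forcing}\big)$ with $\mathcal{L}$ as in \eqref{definition L}, the boundedness estimate \eqref{boundedness} and the $O(\eta)$ smallness of the perturbation give a contraction, hence existence and uniqueness, provided $\|f_1\|_X\le C\eta$ and the inhomogeneous Dirichlet datum $g|_{Y=0}=-u_e^{(1)}(\theta,1)$ are dealt with. The bound on $f_1$ is read off its explicit formula: every summand is a bounded factor times one of the $O(\eta)$, fast-decaying quantities $u_p^{(0)},\partial_Y u_p^{(0)},\partial_{YY}u_p^{(0)},v_p^{(1)},u_e^{(1)},v_e^{(1)},p_p^{(1)}$ (or $Y$ times such), so $\|f_1\|_X\le C\eta$; the nonzero boundary value is removed by subtracting a fixed extension, just as $Q_0$ was split off in \eqref{Q0}--\eqref{modified prandtl equation-3}. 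The weighted $\langle Y\rangle^{2l}$ bounds follow by propagating $\langle\psi\rangle^{2l}$ weights through the same multiplier hierarchy ($g$, $\partial_\theta g$, $g\langle\psi\rangle^{2l}$, and higher derivatives), using the key algebraic fact that $(b,V)$ is divergence-free in $(\theta,Y)$ --- so the transport term produces no bulk contribution --- together with $|\partial_\theta b|=|\partial_\theta u_p^{(0)}|\le C\eta$ to absorb the remaining zeroth-order terms; since $b$ is bounded above and below, $\psi\asymp Y$ and the two weight families are equivalent.

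The remaining point --- and the one I expect to require the most care --- is extracting the far-field constant $A_{1\infty}$ and upgrading the given $\partial_Y u_p^{(1)}\to 0$ to $u_p^{(1)}\to A_{1\infty}$. I would split $g$ into its $\theta$-average $\bar g(Y)$ and fluctuation $g^{*}$. Averaging the equation in $\theta$, and using $\int_0^{2\pi}u_e^{(1)}(\theta,1)\,d\theta=0$ and $\int_0^{2\pi}v_p^{(1)}\,d\theta=0$, yields a second-order ODE in $\psi$ (equivalently $Y$) for $\bar g$ with fast-decaying forcing; integrating twice, together with $\bar g(0)=0$ and boundedness, gives $\partial_Y\bar g\to 0$ and $\bar g\to A_{1\infty}$ with $|A_{1\infty}|\le C\eta$ (the order-$\varepsilon$ analogue of the Batchelor--Wood computation in Lemma \ref{BW}). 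The fluctuation $g^{*}$ lies in $X$ --- it decays, and in fact exponentially, by the Fourier-in-$\theta$ analysis of $\partial_\theta-u_e(1)\partial_{\psi\psi}$ as in Appendix A --- which yields the stated weighted estimate for $u_p^{(1)}-A_{1\infty}$. Finally I would recover $v_p^{(2)}=-\int_{-\infty}^Y\partial_\theta u_p^{(1)}\,dY'-Yv_p^{(1)}$ from the divergence relation; its weighted decay is inherited from that of $\partial_\theta u_p^{(1)}=\partial_\theta g^{*}$ and of $Yv_p^{(1)}$, and $\int_0^{2\pi}v_p^{(2)}\,d\theta=0$ follows from $2\pi$-periodicity together with $\int_0^{2\pi}v_p^{(1)}\,d\theta=0$. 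Uniqueness is immediate from linearity and the a priori estimate.
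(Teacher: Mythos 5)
Your proposal is correct in substance but reaches existence by a genuinely different route than the paper. The paper does \emph{not} set up a contraction around the constant-coefficient operator of \eqref{definition L}; instead it first homogenizes the boundary data (your subtraction of an extension corresponds exactly to the corrector $u_e^{(1)}(\theta,1)\eta(Y)$ and the modified $v$ in \eqref{new linearized Prandtl equation}), then proves existence by an elliptic regularization $-\delta\partial_{\theta\theta}u^{\delta}$ of \eqref{appro linear prandtl} in the original $(\theta,Y)$ variables, closing a basic energy estimate (multiplier $u^{\delta}$) and a positivity estimate (multiplier $\partial_\theta u^{\delta}$) in the space $\dot{H}^1_0$ and letting $\delta\to 0$; only afterwards does it pass to von Mises variables, and only to propagate the $\langle\psi\rangle^{l}$ weights by the zero-mode/fluctuation splitting you describe. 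Working in $\dot H^1_0$ is precisely what lets the paper postpone the far-field constant: $A_{1\infty}$ is extracted at the very end by Hardy's inequality from the decay of $\partial_Y u$. Your plan front-loads the von Mises transform and replaces the regularization by a Banach fixed point, which is viable, but one step cannot be done by citation: the operator $\mathcal{L}$ of \eqref{definition L} and its bound \eqref{boundedness} apply only to sources of the form $\Lambda_\theta$, whereas your perturbation terms ($\partial_\psi((a-u_e(1))\partial_\psi\tilde g)$, $b\tilde g$, $c\tilde v$) and the forcing $F$ are not $\theta$-derivatives. You therefore need a new solution operator for $\Phi_\theta-u_e(1)\Phi_{\psi\psi}=G$ with general $G$: on the nonzero Fourier modes this is fine (the imaginary part of the energy identity gives $\|\hat\Phi_k\|_2\le|k|^{-1}\|\hat G_k\|_2$), but on the zero mode the two boundary conditions $\Phi_0(0)=0$ and $\Phi_0\to 0$ are incompatible for generic $G_0$ --- this incompatibility \emph{is} the origin of $A_{1\infty}\neq 0$ --- so the fixed point must be run on the pair (bounded zero mode with decaying derivatives, decaying fluctuation), with the $O(\eta)$ coupling between the two components absorbed into the contraction constant. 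With that operator built, the rest of your argument (smallness of $f_1$, recovery of $v_p^{(2)}$ and its zero $\theta$-average from the divergence relation, uniqueness from linearity) matches the paper's.
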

\begin{proof}
Let $\eta\in C_c^\infty ((-\infty,0])$ satisfy
\begin{align*}
\eta(0)=1,\quad \int_0^{+\infty}\eta(y)dy=0.
\end{align*}
For simple, we set
\begin{align*}
\bar{u}:&=u_e(1)+u_p^{(0)}, \quad \bar{v}:=v_e^{(1)}(\theta,1)+v_p^{(1)},\\
u:&=u_p^{(1)}+u_e^{(1)}(\theta,1)\eta(Y), \quad v:=v_p^{(2)}-v_p^{(2)}(\theta,0)+Y v_p^{(1)}-\partial_\theta u_e^{(1)}(\theta,1)\int_0^Y\eta(z)dz.
\end{align*}
Then, the equations (\ref{first linearized prandtl problem near 1}) reduce to
\begin{eqnarray}\label{new linearized Prandtl equation}
\left \{
\begin {array}{ll}
\bar{u}\partial_\theta u+\bar{v}\partial_Yu+u\partial_\theta \bar{u}+v\partial_Y\bar{u}-\partial_{YY}u=\tilde{f},\\[7pt]
\partial_\theta u+\partial_Yv=0,\\[5pt]
u(\theta,Y)=u(\theta+2\pi,Y),\quad v(\theta,Y)=v(\theta+2\pi,Y)\\[5pt]
u|_{Y=0}=v|_{Y=0}=0,\ \ \lim_{Y\rightarrow -\infty}\partial_Yu=0,
\end{array}
\right.
\end{eqnarray}
where $\tilde{f}(\theta,Y)$ is $2\pi$-periodic function and decay fast as $Y\rightarrow -\infty$.
We can solve the equations (\ref{new linearized Prandtl equation}) by consider the following approximate system. Let $\delta>0$ be a constant, we consider the following elliptic equation.
\begin{eqnarray}\label{appro linear prandtl}
\left \{
\begin {array}{ll}
\bar{u}\partial_\theta \ude+\bar{v}\partial_Yu^\de+\big[\int_Y^0\p_{\theta}\ude(\theta,z)dz \big]\partial_Y\bar{u}+\ude\partial_\theta \bar{u}-\partial_{YY}\ude-\de \p_{\theta\theta} u^\de=\tilde{f},\\[7pt]
\ude(\theta,Y)=\ude(\theta+2\pi,Y),\\[5pt]
\ude|_{Y=0}=0.
\end{array}
\right.
\end{eqnarray}
We expect the solution of this equation is in $\dot{H}^1_0=\{u|\p_{\theta}u\in L^2,\p_Y u\in L^2, u|_{Y=0}=0\}$ rather than $H^1_0=\{u|u\in L^2,\p_{\theta}u\in L^2,\p_Y u\in L^2, u|_{Y=0}=0\}$. Now we establish apriori estimate of equation (\ref{appro linear prandtl}).
Multiplying the first equation in (\ref{appro linear prandtl}) by $\ude$ and integrating in $(\theta,r)\in(0,2\pi)\times(-\infty,0)$, we obtain that
\begin{align*}
&\int_{-\infty}^{0}\int_0^{2\pi}\Big[\bar{u}\partial_\theta \ude+\bar{v}\partial_Yu^\de+\Big(\int_Y^0\p_{\theta}\ude(\theta,z)dz \Big)\partial_Y\bar{u}+\ude\partial_\theta \bar{u}-\partial_{YY}\ude-\de \p_{\theta\theta} u^\de\Big]\ude d\theta dY\nonumber\\
=&\int_{-\infty}^{0}\int_0^{2\pi}\tilde{f}\ude d\theta dY.
\end{align*}
It's easy to get
\beno
\int_{-\infty}^{0}\int_0^{2\pi}\big[-\partial_{YY}\ude-\de \p_{\theta\theta} u^\de\big]\ude d\theta dY=\|\partial_Y\ude\|_2^2 +\de \|\p_{\theta} u^\de\|_2^2.
\eeno

Recall the estimates (\ref{decay behavior-prandtl}) and (\ref{Estimate of first linearized Euler equation}), we have
\begin{align*}
 &\big|\p^j_{\theta}\p^k_{Y}(\bar{u}-u_e(1))\big<Y\big>^l\big|\leq C(j,k,l)\eta,\\[5pt] &\big|\p^j_{\theta}\p^k_{Y}(\bar{v}-v^{(1)}_e(\theta,1))\big<Y\big>^l\big|\leq C(j,k,l)\eta,\quad
\big|\p^j_\theta v^{(1)}_e(\theta,1)\big|\leq C(j)\eta,
\end{align*}
thus, we can deduce that
\begin{align*}
&-\int_{-\infty}^{0}\int_0^{2\pi}\Big[\bar{u}\partial_\theta \ude+\bar{v}\partial_Yu^\de+\Big(\int_Y^0\p_{\theta}\ude(\theta,z)dz \Big)\partial_Y\bar{u}+\ude\partial_\theta \bar{u}\Big]\ude d\theta dY+\int_{-\infty}^{0}\int_0^{2\pi}\tilde{f}\ude d\theta dY\\
\leq&\int_{-\infty}^{0}\int_0^{2\pi}\frac{1}{2}\big[\p_\theta\bar{u}+\p_Y\bar{v}\big]\big(\ude\big)^2 d\theta dr+\|Y^2\bar{u}_Y\|_{\infty}\Big\|\frac{\int_Y^0\p_{\theta}\ude dz}{Y}\Big\|_2\Big\|\frac{\ude}{Y}\Big\|_2\\
    &+\|Y^2\bar{u}_\theta\|_{\infty}\Big\|\frac{\ude}{Y}\Big\|_2^2+\|Y\tilde{f}\|_2\Big\|\frac{\ude}{Y}\Big\|_2\\[5pt]
\leq& C\eta \big[\|\p_{\theta}\ude\|_2^2+\|\p_Y\ude\|_2^2\big]+C\|Y\tilde{f}\|_2\|\p_Y \ude\|_2,
\end{align*}
where we used the $\bar{u}_\theta+\bar{v}_Y=0$ and the Hardy inequality
$$
\Big\|\frac{\int_Y^0\p_{\theta}\ude dz}{Y}\Big\|_2\leq C\|\p_{\theta}\ude\|_2,\quad \Big\|\frac{\ude}{Y}\Big\|_2\leq C\|\p_Y\ude\|_2.
$$
Collecting the above estimates, we obtain
\begin{align*}
&\|\partial_Y\ude\|_2^2 +\de \|\p_{\theta} u^\de\|_2^2\leq C\eta \big[\|\p_{\theta}\ude\|_2^2+\|\p_Y\ude\|_2^2\big]+C\|Y\tilde{f}\|_2\|\p_Y \ude\|_2,
\end{align*}
where $C$ is independent on $\eta$ and $\de$. If $\eta$ is small enough, there holds
\begin{align}\label{energy estimate prandtl}
&\|\partial_Y\ude\|_2^2 +\de \|\p_{\theta} u^\de\|_2^2\leq C\eta \|\p_{\theta}\ude\|_2^2+C\|Y\tilde{f}\|_2^2.
\end{align}
Next we multiply the first equation in (\ref{appro linear prandtl}) by $\p_{\theta}\ude$ and integrate in $(\theta,r)\in(0,2\pi)\times(-\infty,0)$, we arrive at
\begin{align*}
&\int_{-\infty}^{0}\int_0^{2\pi}\Big[\bar{u}\partial_\theta \ude+\bar{v}\partial_Yu^\de+\Big(\int_Y^0\p_{\theta}\ude(\theta,z)dz \Big)\partial_Y\bar{u}+\ude\partial_\theta \bar{u}-\partial_{YY}\ude-\de \p_{\theta\theta} u^\de\Big]\p_{\theta}\ude d\theta dY\nonumber\\
=&\int_{-\infty}^{0}\int_0^{2\pi}\tilde{f}\p_{\theta}\ude d\theta dY.
\end{align*}
It's direct to obtain
\beno
\int_{-\infty}^{0}\int_0^{2\pi}\bar{u}\partial_\theta \ude \p_{\theta}\ude d\theta dY= \int_{-\infty}^{0}\int_0^{2\pi}\bar{u}\big|\partial_\theta \ude\big|^2d\theta dY\geq(\alpha-C\eta)\|\p_\theta \ude\|^2_2.
\eeno
The diffusion term can be computed as follows
\begin{align*}
\int_{-\infty}^{0}\int_0^{2\pi}\big[-\partial_{YY}\ude-\de \p_{\theta\theta} u^\de\big]\p_\theta \ude d\theta dY=\int_{-\infty}^{0}\int_0^{2\pi}\Big(\frac{1}{2}\p_{\theta}\big(\p_{Y}\ude\big)^2-\frac{\de}{2}\p_{\theta}\big(\p_\theta \ude\big)^2\Big)d\theta dY=0.
\end{align*}

Moreover, there holds
\begin{align*}
&-\int_{-\infty}^{0}\int_0^{2\pi}\Big[\bar{v}\partial_Yu^\de+\Big(\int_Y^0\p_{\theta}\ude(\theta,z)dz \Big)\partial_Y\bar{u}+\ude\partial_\theta \bar{u}\Big]\p_{\theta}\ude d\theta dY+\int_{-\infty}^{0}\int_0^{2\pi}\tilde{f}\p_{\theta}\ude d\theta dY\\
\leq& \|\bar{v}\|_{\infty}\|\p_Y \ude\|_2\|\p_{\theta}\ude\|_2+\|Y \bar{u}_Y\|_{\infty}\Big\|\frac{\int_Y^0\p_{\theta}\ude dz}{Y}\Big\|_2\|\p_{\theta} \ude\|_2\\
&+\|Y\p_{\theta}\bar{u}\|_{\infty}\Big\|\frac{\ude}{Y}\Big\|_2\|\p_{\theta}\ude\|_2+\|\tilde{f}\|_2\|\p_{\theta}\ude\|_2\\[5pt]
\leq&C\eta\big[\|\p_Y\ude\|^2_2+\|\p_\theta \ude\|^2_2\big]+\|\tilde{f}\|_2\|\p_{\theta}\ude\|_2.
\end{align*}
Thus, we obtain
\begin{align*}
&\alpha\|\p_{\theta}\ude\|_2^2\leq C \eta\big[\|\p_\theta\ude\|_2^2+\|\p_Y\ude\|_2^2\big]+\|\tilde{f}\|_2\|\p_{\theta}\ude\|_2.
\end{align*}
Since $C$ is independent on $\eta$ and $\de$, $\eta$ is small enough, hence there holds
\begin{align}\label{positive estimate prandtl}
&\alpha\|\p_{\theta}\ude\|_2^2\leq C \eta\big\|\p_Y\ude\|_2^2+\|\tilde{f}\|_2^2.
\end{align}

Combining (\ref{energy estimate prandtl}) and (\ref{positive estimate prandtl}), we have
\begin{align}\label{closed prandtl}
&\alpha\|\p_{\theta}\ude\|_2^2+\|\partial_Y\ude\|_2^2 +\de \|\p_{\theta} u^\de\|_2^2\leq C\|Y\tilde{f}\|_2^2+C\|\tilde{f}\|_2^2.
\end{align}
According the first equation in (\ref{appro linear prandtl}), we deduce
\begin{align}\label{second derivative estimate}
&\|\p_{YY}\ude\|^2_2+2\de\|\p_{\theta Y}\ude\|^2_2+\de^2\|\p_{\theta \theta}\ude\|^2_2=\Big\|\partial_{YY}\ude+\de \p_{\theta\theta} u^\de\Big\|_2^2\nonumber\\
=&\bigg\|\bar{u}\partial_\theta \ude+\bar{v}\partial_Yu^\de+\Big(\int_Y^0\p_{\theta}\ude(\theta,z)dz \Big)\partial_Y\bar{u}+\ude\partial_\theta \bar{u}-\tilde{f}\bigg\|_2^2\nonumber\\[5pt]
\leq &C\big[\|\tilde{f}\|^2_2+\|\p_{\theta}\ude\|_2^2+\|\partial_Y\ude\|_2^2\big]\leq C\|Y\tilde{f}\|_2^2+C\|\tilde{f}\|_2^2.
\end{align}
Collecting the estimate (\ref{closed prandtl}) and (\ref{second derivative estimate}), we obtain
\begin{align*}
&\alpha\|\p_{\theta}\ude\|_2^2+\|\partial_Y\ude\|_2^2 +\|\p_{YY}\ude\|^2_2+\de \|\p_{\theta} u^\de\|_2^2+2\de\|\p_{\theta Y}\ude\|^2_2+\de^2\|\p_{\theta \theta}\ude\|^2_2\leq C\|\big<Y\big>\tilde{f}\|_2^2.
\end{align*}
The above inequality shows the existences and uniqueness of solution about system (\ref{appro linear prandtl}) for any $\de>0$ in space $\dot{H}^1_0$, moreover, the solution is smooth if $\tilde{f}$ is smooth enough. Set
 $$u:=\lim_{\de\rightarrow0} \ude,\quad v:=\int_{Y}^0\p_{\theta}u(\theta,d)dz,$$
then
\begin{align}\label{new linear prandtl estimate}
&\alpha\|\p_{\theta}u\|_2^2+\|\partial_Y u\|_2^2 +\|\p_{YY} u\|^2_2\leq C\|\big<Y\big>\tilde{f}\|^2_2.
\end{align}
and $[u,v]$ solves the system (\ref{new linearized Prandtl equation}).

Finally, we show that the derivatives of $u,v$ decay fast as $Y\rightarrow -\infty$. Let $\psi=\int_0^Y \bar{u}(\theta,z)dz$ and
\begin{align*}
\tilde{u}(\theta,\psi)=u(\theta, Y(\theta,\psi)), \ \tilde{v}(\theta,\psi)=v(\theta, Y(\theta,\psi)), \
F(\theta,\psi)=\frac{\tilde{f}(\theta, Y(\theta,\psi))}{\bar{u}(\theta, Y(\theta,\psi))},
\end{align*}
then there hold
\begin{eqnarray}\label{linear prandtl in new variable}
\left \{
\begin {array}{ll}
\partial_\theta \tilde{u}-\partial_\psi(a(\theta,\psi)\partial_\psi \tilde{u})+b(\theta,\psi)\tilde{u}+c(\theta,\psi)\tilde{v}=F(\theta,\psi),\\[7pt]
\tilde{u}(\theta+2\pi,\psi)=\tilde{u}(\theta,\psi),\\[5pt]
\tilde{u}(\theta, 0)=0, \ \lim_{\psi\rightarrow -\infty}\partial_\psi\tilde{u}(\theta, \psi)=0,
\end{array}
\right.
\end{eqnarray}
where
\begin{align*}
a(\theta,\psi)=\bar{u}(\theta, Y(\theta,\psi)),
 \ b(\theta,\psi)=\frac{\partial_\theta\bar{u}(\theta, Y(\theta,\psi))}{\bar{u}(\theta, Y(\theta,\psi))},
 \ c(\theta,\psi)=\frac{\partial_Y\bar{u}(\theta, Y(\theta,\psi))}{\bar{u}(\theta, Y(\theta,\psi))}.
\end{align*}
Notice that there exist $\eta_0>0$ such that for any $\eta\in (0,\eta_0)$, there holds
\begin{align*}
\frac{\alpha}{2}\leq \bar{u}(\theta,Y)\leq\alpha, \ \forall (\theta, Y)\in [0,2\pi]\times (-\infty,0].
\end{align*}
 Thus, we deduce that $\frac{\alpha}{2}\leq \frac{|\psi|}{|Y|}\leq \alpha$.

 We claim that for any $l\in \mathbb{N}$, there holds
 \begin{align}\label{weight estimate in new variable}
 \|\partial_\theta \tilde{u} \psi^l\|_2+\|\partial_\psi \tilde{u}_{\neq}\psi^l\|_2+\|\partial_{\psi\psi} \tilde{u} \psi^l\|_2\leq C(\delta,l)\|F \big<\psi\big>^{l+1}\|_2,
 \end{align}
 where
 $$\tilde{u}_{\neq}=\tilde{u}-u_0(\psi), \ u_0(\psi)=\frac{1}{2\pi}\int_0^{2\pi}\tilde{u}(\theta,\psi)d\theta.$$
 From (\ref{new linear prandtl estimate}), we deduce that
 \begin{align*}
 \|\partial_\theta \tilde{u} \|_2+\|\partial_\psi \tilde{u}\|_2+\|\partial_{\psi\psi} \tilde{u} \|_2\leq C(\delta)\|F \big<\psi\big>\|_2,
 \end{align*}
 thus (\ref{weight estimate in new variable}) holds for $l=0$.

 For any $l\geq 1$, multiplying $\tilde{u}_{\neq}\psi^{2l}$ in (\ref{linear prandtl in new variable}) and integrating in $[0,2\pi]\times (-\infty,0]$, we obtain
 \begin{align*}
 &\underbrace{\int_0^{2\pi}\int_{-\infty}^0\partial_\theta \tilde{u}\tilde{u}_{\neq}\psi^{2l}d\psi d\theta}_{I_1} -\underbrace{\int_0^{2\pi}\int_{-\infty}^0\partial_\psi(a(\theta,\psi)\partial_\psi \tilde{u})\tilde{u}_{\neq}\psi^{2l}d\psi d\theta}_{I_2}\\
 =&\underbrace{\int_0^{2\pi}\int_{-\infty}^0[F(\theta,\psi)-b(\theta,\psi)\tilde{u}-c(\theta,\psi)\tilde{v}]\tilde{u}_{\neq}\psi^{2l}d\psi d\theta}_{I_3}.
 \end{align*}
Obviously, $I_1=0$. Due to the fast decay of $b(\theta,\psi), c(\theta,\psi)$ as $\psi\rightarrow -\infty$, we deduce that
\begin{align*}
|I_3|\leq& C\|F \big<\psi\big>^{l+1}\|_2\|\partial_\theta\tilde{u}_{\neq}\psi^{l-1}\|_2+C(\delta)(\|\partial_\theta \tilde{u} \|^2_2+\|\partial_\psi \tilde{u}\|^2_2)\\[5pt]
\leq &C(\delta)\|F \big<\psi\big>^{l+1}\|_2^2+C(\delta)\|\partial_\theta\tilde{u}\psi^{l-1}\|_2^2.
\end{align*}
Moreover, there holds
\begin{align*}
I_2=\underbrace{\int_0^{2\pi}\int_{-\infty}^0a(\theta,\psi)\partial_\psi \tilde{u}\partial_\psi\tilde{u}_{\neq}\psi^{2l}d\psi d\theta}_{I_{21}}
+\underbrace{2l\int_0^{2\pi}\int_{-\infty}^0a(\theta,\psi)\partial_\psi \tilde{u}\tilde{u}_{\neq}\psi^{2l-1}d\psi d\theta}_{I_{22}}.
\end{align*}
Notice that $a(\theta,\psi)=u_e(1)+u_p^{(0)}(\theta,Y(\theta,\psi))$, we deduce that
\begin{align*}
I_{21}=&u_e(1)\int_0^{2\pi}\int_{-\infty}^0\partial_\psi \tilde{u}\partial_\psi\tilde{u}_{\neq}\psi^{2l}d\psi d\theta+\int_0^{2\pi}\int_{-\infty}^0u_p^{(0)}(\theta,Y(\theta,\psi))\partial_\psi \tilde{u}\partial_\psi\tilde{u}_{\neq}\psi^{2l}d\psi d\theta\\
=&u_e(1)\int_0^{2\pi}\int_{-\infty}^0\partial_\psi \tilde{u}_{\neq}\partial_\psi\tilde{u}_{\neq}\psi^{2l}d\psi d\theta+\int_0^{2\pi}\int_{-\infty}^0u_p^{(0)}(\theta,Y(\theta,\psi))\partial_\psi \tilde{u}\partial_\psi\tilde{u}_{\neq}\psi^{2l}d\psi d\theta\\
\geq& \frac{\alpha}{2}\|\partial_\psi \tilde{u}_{\neq}\psi^l\|^2_2-C(\delta)\|\partial_\psi \tilde{u}\|^2_2.
\end{align*}
Moreover, by H\"{o}lder inequality and P\'{o}incare inequality,  there holds
\begin{align*}
|I_{22}|\leq C(\delta,l)\|\partial_\psi \tilde{u}_{\neq}\psi^l\|_2\|\partial_\theta \tilde{u}\psi^{l-1}\|_2.
\end{align*}
Thus, we obtain
\begin{align}\label{first order norm estimate}
\|\partial_\psi \tilde{u}_{\neq}\psi^l\|_2\leq C(\delta,l)\|\partial_\theta \tilde{u}\psi^{l-1}\|_2+C(\delta)\|F \big<\psi\big>^{l+1}\|_2.
\end{align}

Moreover, we can easily get
\begin{align*}
\partial_\theta \tilde{u}-a(\theta,\psi)\partial_{\psi\psi }\tilde{u}=F(\theta,\psi)+\partial_\psi a(\theta,\psi)\partial_\psi \tilde{u}-b(\theta,\psi)\tilde{u}-c(\theta,\psi)\tilde{v},
\end{align*}
thus there holds
\begin{align*}
\|[\partial_\theta \tilde{u}-a(\theta,\psi)\partial_{\psi\psi }\tilde{u}]\psi^l\|_2^2=\|[F(\theta,\psi)+\partial_\psi a(\theta,\psi)\partial_\psi \tilde{u}-b(\theta,\psi)\tilde{u}-c(\theta,\psi)\tilde{v}]\psi^l\|_2^2.
\end{align*}
The right side can be controlled by
\begin{align*}
C(\delta)\|F \big<\psi\big>^{l}\|_2^2+C(\delta)(\|\partial_\theta \tilde{u} \|^2_2+\|\partial_\psi \tilde{u}\|^2_2).
\end{align*}
Moreover, there holds
\begin{align*}
&\|[\partial_\theta \tilde{u}-a(\theta,\psi)\partial_{\psi\psi }\tilde{u}]\psi^l\|_2^2\\
=&\|\partial_\theta \tilde{u} \psi^l\|_2^2+\| a(\theta,\psi)\partial_{\psi\psi }\tilde{u}\psi^l\|_2^2-2\int_0^{2\pi}\int_{-\infty}^0a(\theta,\psi)\partial_\theta \tilde{u}\partial_{\psi\psi }\tilde{u}\psi^{2l}d\psi d\theta\\
\geq &\|\partial_\theta \tilde{u} \psi^l\|_2^2+\frac{\alpha}{2}\|\partial_{\psi\psi }\tilde{u}\psi^l\|_2^2\underbrace{-2\int_0^{2\pi}\int_{-\infty}^0a(\theta,\psi)\partial_\theta \tilde{u}\partial_{\psi\psi}\tilde{u}\psi^{2l}d\psi d\theta}_{I}.
\end{align*}
Integrating by parts, we deduce that
\begin{align*}
I=&\underbrace{2\int_0^{2\pi}\int_{-\infty}^0\partial_\psi a(\theta,\psi)\partial_\theta\tilde{u}\partial_{\psi}\tilde{u}\psi^{2l}d\psi d\theta}_{I_1}+\underbrace{4l\int_0^{2\pi}\int_{-\infty}^0a(\theta,\psi)\partial_\theta\tilde{u}\partial_{\psi}\tilde{u}\psi^{2l-1}d\psi d\theta}_{I_2}\\
&+\underbrace{2\int_0^{2\pi}\int_{-\infty}^0a(\theta,\psi)\partial_{\theta \psi}\tilde{u}\partial_{\psi}\tilde{u}\psi^{2l}d\psi d\theta}_{I_3}.
\end{align*}
Obviously, there holds
\begin{align*}
|I_1|+|I_3|\leq C(\delta)(\|\partial_\theta \tilde{u} \|^2_2+\|\partial_\psi \tilde{u}\|^2_2).
\end{align*}
Moreover,
\begin{align*}
|I_2|=&\Big|4lu_e(1)\int_0^{2\pi}\int_{-\infty}^0\partial_\theta\tilde{u}\psi^{2l-1}\partial_{\psi}\tilde{u}d\psi d\theta+4l\int_0^{2\pi}\int_{-\infty}^0u_p^{(0)}(\theta,Y(\theta,\psi))\partial_\theta\tilde{u}\psi^{2l-1}\partial_{\psi}\tilde{u}d\psi d\theta\Big|\\
=&\Big|4lu_e(1)\int_0^{2\pi}\int_{-\infty}^0\partial_\theta\tilde{u}\psi^{2l-1}\partial_{\psi}\tilde{u}_{\neq}d\psi d\theta+4l\int_0^{2\pi}\int_{-\infty}^0u_p^{(0)}(\theta,Y(\theta,\psi))\partial_\theta\tilde{u}\psi^{2l-1}\partial_{\psi}\tilde{u}d\psi d\theta\Big|\\[5pt]
\leq & C(l)\|\partial_\theta \tilde{u}\psi^l \|_2\|\partial_\psi \tilde{u}_{\neq}\psi^{l-1}\|_2+C(\delta,l)(\|\partial_\theta \tilde{u} \|^2_2+\|\partial_\psi \tilde{u}\|^2_2).
\end{align*}
Thus, we obtain
\begin{align}\label{second estimate on norm}
\|\partial_\theta \tilde{u} \psi^l\|_2+\|\partial_{\psi\psi }\tilde{u}\psi^l\|_2\leq C(\delta,l)\|\partial_\psi \tilde{u}_{\neq}\psi^{l-1}\|_2+C(\delta,l)\|F \big<\psi\big>^{l+1}\|_2.
\end{align}
Combining the estimate (\ref{first order norm estimate}) and (\ref{second estimate on norm}), we obtain that for any $l\geq 1$, there holds
\begin{align*}
\|\partial_\theta \tilde{u} \psi^l\|_2+\|\partial_\psi \tilde{u}_{\neq}\psi^l\|_2+\|\partial_{\psi\psi} \tilde{u} \psi^l\|_2\leq C(\delta,l)(\|\partial_\psi \tilde{u}_{\neq}\psi^{l-1}\|_2+\|\partial_\theta \tilde{u}\psi^{l-1}\|_2)+C(\delta,l)\|F \big<\psi\big>^{l}\|_2.
\end{align*}
Thus, by induction, we obtain (\ref{weight estimate in new variable}).

Furthermore, by the Hardy inequality, we have
\begin{align*}
\|\partial_\psi \tilde{u}\psi^{l-1}\|_2\leq C\|\partial_{\psi\psi} \tilde{u} \psi^{l}\|_2,
\end{align*}
hence there holds
\begin{align*}
 \|\partial_\theta \tilde{u} \psi^l\|_2+\|\partial_\psi \tilde{u}\psi^{l-1}\|_2+\|\partial_{\psi\psi} \tilde{u} \psi^l\|_2\leq C(\delta)\|F \big<\psi\big>^{l+1}\|_2.
 \end{align*}
Returning to the original variable, we obtain that for any $l\geq 1$, there holds
\begin{align*}
\big\|\big<Y\big>^{l}\p_{YY}u\big\|_2^2+\|\big<Y\big>^{l-1}\p_{\theta}u\|^2_2+\|\big<Y\big>^{l-1}\p_{Y}u\|^2_2\leq C(\delta, l)\|\big<Y\big>^{l+1}\tilde{f}\|^2_2.
\end{align*}

Furthermore, by induction, we obtain that for any $m\in \mathbb{N}_+, l\in \mathbb{N}$, there holds
\begin{align*}
&\sum_{j+k\leq m}\Big(\big\|\big<Y\big>^{l}\p^j_{\theta}\p^k_Y\p_{YY}u\big\|_2^2+\big\|\big<Y\big>^{l-1}\p^j_{\theta}\p^k_Y \p_\theta u\big\|_2^2+\big\|\big<Y\big>^{l-1}\p^j_{\theta}\p^k_Y \p_Y u\big\|_2^2\Big)\\
 &\leq C(\delta,m,l)\sum\limits_{j+k\leq m}\big\|\big<Y\big>^{l+1}\p^{j}_{\theta}\p^{k}_Y\tilde{f}\big\|^2_2\leq C(\delta,m,l)\eta^2.
\end{align*}

Noting that  $\lim\limits_{Y\rightarrow -\infty}(u_\theta,u_Y)=0$ and $A_{1\infty}:=\lim\limits_{Y\rightarrow -\infty}u(\theta,Y)$ is a constant independent on $\theta$,  then by the Hardy inequality  we have for any $l\geq2$
\begin{align*}
\|Y^{l-2} (u-A_{1\infty})\|_2\leq C(\delta,l) \|Y^{l-1} \partial_{Y}u\|_2\leq C(\delta,l)\eta^2.
\end{align*}
This completes the proof of this proposition.
\end{proof}

\begin{Remark}
Unlike the usual case, we can not expect $\lim_{Y\rightarrow-\infty}u^{(1)}_p=0$ due to the periodic boundary condition on $\theta$ direction. However, the behaviour on $-\infty$ of boundary layer profile $u^{(1)}_p$ does not affect the out flow $[u^{(1)}_e, v^{(1)}_e]$ because $[u^{(1)}_e+A_{1\infty}, v^{(1)}_e]$ also solves the equations (\ref{outer-1 order equation}). Motivated by this observation, we modify the Euler flow $[u^{(1)}_e, v^{(1)}_e]$ in (\ref{modify Euler}).
\end{Remark}
Next, we construct the pressure $p_p^{(2)}(\theta,Y)$. Consider the equation
\begin{align}\label{equation for second pressure}
\partial_Yp_p^{(2)}(\theta, Y)=g_1(\theta,Y), \quad \lim_{Y\rightarrow -\infty}p_p^{(2)}(\theta,Y)=0,
\end{align}
where
\begin{align*}
g_1(\theta,Y)=&-Y\partial_Yp_p^{(1)}+\partial_{YY}v_p^{(1)}-u_e(1)\partial_\theta v_p^{(1)}-u_p^{(0)} (\partial_\theta v_e^{(1)}(\theta,1)+\partial_\theta v_p^{(1)})\\[5pt]
&-\partial_Yv_p^{(1)}(v_e^{(1)}(\theta,1)+v_p^{(1)})-2(Yu'_e(1)u_p^{(0)}+u_e(1)\tilde{u}_p^{(1)}+[u_e^{(1)}(\theta,1)+A_1]u_p^{(0)}+u_p^{(0)}\tilde{u}_p^{(1)},
\end{align*}
here and below,
$$\tilde{u}_p^{(1)}=u_p^{(1)}-A_{1\infty}.$$
$g_1(\theta,Y)$ can be obtained by replacing $u_p^{(1)}$ by $\tilde{u}_p^{(1)}$ in the expansion (\ref{first order expansion}) and putting the new expansion into the second equation of (\ref{NS-curvilnear}), then collecting the $\varepsilon^1$-order terms together.
 Notice that $g_1(\theta,Y)$ decay fast as $Y\rightarrow -\infty$, we can get $p_p^{(2)}(\theta,Y)$ by solving (\ref{equation for second pressure}) and deduce that $p_p^{(2)}(\theta,Y)$ decay fast as $Y\rightarrow -\infty$.

\subsubsection{Linearized Euler equations for $(u_e^{(2)}, v_e^{(2)}, p_e^{(2)})$ and their solvabilities}
\indent

Let $\chi(r)\in C^\infty([0,1])$ be an increasing smooth function such that
\begin{align}\label{cut-off function}
\chi(r)=
\left\{
\begin{array}{lll}
0, \quad r\in [0,\frac12], \\[5pt]
1, \quad r\in [\frac34,1].
\end{array}
\right.
\end{align}
Then, let $\phi_1(r)=-A_{1\infty}\big(r\chi''(r)+\chi'(r)-\frac{\chi(r)}{r}\big)$ and
\begin{align*}
A_1(r):=a_1r+r\int_0^r\frac{\phi_1(s)}{2s}-\frac{1}{r}\int_0^r\frac{s\phi_1(s)}{2}ds,
\end{align*}
where $a_1$ is a constant such that $A_1(1)=0$. Obviously, $|a_1|\leq C\eta$ and
\begin{align}\label{corrector of first order Euler equation}
\left\{
\begin{array}{ll}
rA''_1(r)+A'_1(r)-\frac{A_1(r)}{r}=-\phi_1(r),\ 0<r\leq 1 \\[5pt]
A_1(1)=0.
 \end{array}
 \right.
\end{align}
Direct computation gives $\|\partial_r^kA_1(r)\|_\infty\leq C(k)\eta.$  Moreover, notice that $\chi(r)=0$ for $r\leq \frac12$, we deduce that $A_1(r)=a_1r$ for $r\leq \frac12$.

Set
\begin{align}\label{modify Euler}
\begin{aligned}
\tilde{u}_e^{(1)}(\theta,r):&=u_e^{(1)}(\theta,r)+\chi(r)A_{1\infty}+A_1(r), \\[5pt]
\tilde{v}_e^{(1)}(\theta,r):&=v_e^{(1)}(\theta,r),\\
\tilde{p}_e^{(1)}(\theta,r):&=p_e^{(1)}(\theta,r)+2a\int_{0}^r[\chi(s)A_{1\infty}+A_1(s)]ds,
\end{aligned}
\end{align}
then $(\tilde{u}_e^{(1)},\tilde{v}_e^{(1)},\tilde{p}_e^{(1)})$ also satisfies the linearized Euler equations (\ref{outer-1 order equation})
with the boundary condition (\ref{outer-1 order-bc}). Moreover, there holds
\begin{align}\label{Estimate of modified first linearized Euler equation}
\begin{aligned}
|\partial_\theta \tilde{u}_e^{(1)}+\tilde{v}_e^{(1)}|(\theta,r)\leq& C\eta r, \ |\partial_\theta \tilde{v}_e^{(1)}-\tilde{u}_e^{(1)}|(\theta,r)\leq C\eta r, \ \forall (\theta, r)\in \Omega, \\[5pt]
 \|\partial^k_\theta\partial^j_r(\tilde{u}_e^{(1)},\tilde{v}_e^{(1)})\|_2\leq& C(k,j)\eta, \quad \forall j,k\geq 0;\\
  r^2\triangle \tilde{u}_e^{(1)}-\tilde{u}_e^{(1)}+2\partial_\theta& \tilde{v}_e^{(1)}=0, \quad
  \int_{0}^{2\pi}\tilde{v}_e^{(1)}d\theta=0.
\end{aligned}
\end{align}
Putting
\begin{align*}
&u^{\varepsilon}(\theta,r)=u_e(r)+\varepsilon \tilde{u}_e^{(1)}(\theta,r)+\varepsilon^2 u_e^{(2)}(\theta,r)+\cdots,\\[5pt]
&v^{\varepsilon}(\theta,r)=\varepsilon \tilde{v}_e^{(1)}(\theta,r)+\varepsilon^2 v_e^{(2)}(\theta,r)+\cdots,\\[5pt]
&p^{\varepsilon}(\theta,r)=p_e(r)+\varepsilon \tilde{p}_e^{(1)}(\theta,r)+\varepsilon^2 p_e^{(2)}(\theta,r)+\cdots
\end{align*}
into the Navier-Stokes equations (\ref{NS-curvilnear}), we obtain the following linearized Euler equations for $(u_e^{(2)},v_e^{(2)}, p_e^{(2)})$
\begin{eqnarray}
\left \{
\begin {array}{ll}
ar \partial_\theta u_e^{(2)}+2arv_e^{(2)}+\partial_\theta p_e^{(2)}+\tilde{u}_e^{(1)}\partial_\theta \tilde{u}_e^{(1)}+\tilde{v}_e^{(1)}r\partial_r\tilde{u}_e^{(1)}+\tilde{u}_e^{(1)}\tilde{v}_e^{(1)}=0,\\[5pt]
ar\partial_\theta v_e^{(2)}-2aru_e^{(2)}+r\partial_rp_e^{(2)}+\tilde{u}_e^{(1)}\partial_\theta \tilde{v}_e^{(1)}+\tilde{v}_e^{(1)}r\partial_r \tilde{v}_e^{(1)}-(\tilde{u}_e^{(1)})^2=0,\\[7pt]
\partial_\theta u_e^{(2)}+r\partial_rv_e^{(2)}+ v_e^{(2)}=0,\label{outer-2 order equation}
\end{array}
\right.
\end{eqnarray}
 with the boundary condition
\begin{align}\label{boundary condition of second Euler}
 v_e^{(2)}|_{r=1}=-v_p^{(2)}|_{Y=0}, \quad v_e^{(2)}(\theta,r)=v_e^{(2)}(\theta+2\pi,r).
\end{align}

\begin{Proposition}\label{solvability of second Euler equation}
The linearized Euler equations  (\ref{outer-2 order equation}) have a solution $(u_e^{(2)}, v_e^{(2)}, p_e^{(2)}))$ which satisfies
\begin{align*}
|\partial_\theta u_e^{(2)}+v_e^{(2)}|(\theta,r)\leq& C\eta r, \ |\partial_\theta v_e^{(2)}-u_e^{(2)}|(\theta,r)\leq C\eta r,  \ \forall (\theta, r)\in \Omega, \\[5pt]
 \|\partial^k_\theta\partial^j_r(u_e^{(2)},v_e^{(2)})\|_2\leq& C(k,j)\eta, \quad \forall j,k\geq 0,\\
  r^2\triangle u_e^{(2)}-u_e^{(2)}+2\partial_\theta &v_e^{(2)}=0, \quad \int_{0}^{2\pi}v_e^{(2)}d\theta=0.
\end{align*}
\end{Proposition}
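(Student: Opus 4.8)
The plan is to follow the proof of Proposition~\ref{solvability of first Euler} almost verbatim: eliminate the pressure from (\ref{outer-2 order equation}) to reduce the system to a single scalar elliptic equation for $\omega:=rv_e^{(2)}$ on the disk, solve it by separation of variables in $\theta$, recover $u_e^{(2)}$ from the incompressibility condition, and build $p_e^{(2)}$ by $\theta$-integration of the first momentum equation. All the asserted bounds then follow from the same Fourier-series computations as in Proposition~\ref{solvability of first Euler}. The only place where the structure of the problem genuinely intervenes is the verification that the quadratic self-interaction of $(\tilde{u}_e^{(1)},\tilde{v}_e^{(1)})$ produces \emph{no} inhomogeneity in the equation for $\omega$; this is what makes the reduction to the first-order situation possible.

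Write $N_1:=\tilde{u}_e^{(1)}\partial_\theta\tilde{u}_e^{(1)}+r\tilde{v}_e^{(1)}\partial_r\tilde{u}_e^{(1)}+\tilde{u}_e^{(1)}\tilde{v}_e^{(1)}$ and $N_2:=\tilde{u}_e^{(1)}\partial_\theta\tilde{v}_e^{(1)}+r\tilde{v}_e^{(1)}\partial_r\tilde{v}_e^{(1)}-(\tilde{u}_e^{(1)})^2$ for the quadratic terms in (\ref{outer-2 order equation}). Setting $\omega=rv_e^{(2)}$, using $\partial_\theta u_e^{(2)}=-\partial_r\omega$ from incompressibility and cross-differentiating the two momentum equations of (\ref{outer-2 order equation}) to eliminate $u_e^{(2)}$ and $p_e^{(2)}$, one obtains, exactly as in the derivation of (\ref{equation for first Euler normal}) but now with the quadratic terms present,
\[
-r\triangle(rv_e^{(2)})=-\tfrac1a\bigl(\partial_rN_1-\tfrac1r\partial_\theta N_2\bigr)\ \ \text{in }\Omega,\qquad rv_e^{(2)}\big|_{r=1}=-v_p^{(2)}\big|_{Y=0},
\]
together with $2\pi$-periodicity in $\theta$. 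The key point is that the right-hand side vanishes identically: a direct computation using the incompressibility of $(\tilde{u}_e^{(1)},\tilde{v}_e^{(1)})$ together with the identity $r^2\triangle\tilde{u}_e^{(1)}-\tilde{u}_e^{(1)}+2\partial_\theta\tilde{v}_e^{(1)}=0$ in (\ref{Estimate of modified first linearized Euler equation}) gives $\partial_rN_1-\frac1r\partial_\theta N_2=0$. (Conceptually, $\partial_rN_1-\frac1r\partial_\theta N_2$ is $r$ times the scalar curl of the convective term $(\tilde{u}_e^{(1)},\tilde{v}_e^{(1)})\cdot\nabla(\tilde{u}_e^{(1)},\tilde{v}_e^{(1)})$, which, by the two-dimensional vorticity identity for the divergence-free field $(\tilde{u}_e^{(1)},\tilde{v}_e^{(1)})$, equals $(\tilde{u}_e^{(1)},\tilde{v}_e^{(1)})\cdot\nabla\tilde\zeta$ with $\tilde\zeta:=\frac1r\partial_r(r\tilde{u}_e^{(1)})-\frac1r\partial_\theta\tilde{v}_e^{(1)}$; the identity in (\ref{Estimate of modified first linearized Euler equation}) together with (\ref{outer-1 order equation}) forces the vorticity $\tilde\zeta$ to be a \emph{constant}, which is precisely the purpose of the corrector $A_1$ in (\ref{modify Euler})--(\ref{corrector of first order Euler equation}).) Hence $\omega=rv_e^{(2)}$ solves the homogeneous Dirichlet problem above, which is of exactly the same form as (\ref{equation for first Euler normal}).

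Now we argue as in Proposition~\ref{solvability of first Euler}. By (\ref{decay behavior-prandtl-1}) one has $\int_0^{2\pi}v_p^{(2)}(\theta,0)\,d\theta=0$, so we may write $-v_p^{(2)}(\theta,0)=\sum_{n\ge1}\bigl[a_{n2}\cos(n\theta)+b_{n2}\sin(n\theta)\bigr]$, and the weighted $L^2$ bounds in (\ref{decay behavior-prandtl-1}) give $|a_{n2}|+|b_{n2}|\le C\eta\,n^{-k}$ for every $k\ge0$. Setting $v_e^{(2)}(\theta,r)=\sum_{n\ge1}r^{n-1}\bigl[a_{n2}\cos(n\theta)+b_{n2}\sin(n\theta)\bigr]$ and $u_e^{(2)}(\theta,r)=\sum_{n\ge1}r^{n-1}\bigl[-a_{n2}\sin(n\theta)+b_{n2}\cos(n\theta)\bigr]$, the series and all their derivatives converge uniformly and define functions smooth up to $r=1$, with $rv_e^{(2)}$ harmonic and $\partial_\theta u_e^{(2)}+\partial_r(rv_e^{(2)})=0$. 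The same computations as in Proposition~\ref{solvability of first Euler} then give $\int_0^{2\pi}v_e^{(2)}\,d\theta=0$, the identity $r^2\triangle u_e^{(2)}-u_e^{(2)}+2\partial_\theta v_e^{(2)}=0$, the bounds $\|\partial_\theta^k\partial_r^j(u_e^{(2)},v_e^{(2)})\|_2\le C(k,j)\eta$, and --- since the $n=1$ contributions cancel in the two combinations, so that the sums start at $n=2$ and are $O(r)$ --- the estimates $|\partial_\theta u_e^{(2)}+v_e^{(2)}|\le C\eta r$ and $|\partial_\theta v_e^{(2)}-u_e^{(2)}|\le C\eta r$ on $\Omega$.

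It remains to construct the pressure. Integrating the first momentum equation of (\ref{outer-2 order equation}) in $\theta$ is consistent because the $\theta$-average of its right-hand side vanishes: $\int_0^{2\pi}v_e^{(2)}\,d\theta=0$, while $\int_0^{2\pi}N_1\,d\theta=\int_0^{2\pi}\tilde{v}_e^{(1)}\,\partial_r(r\tilde{u}_e^{(1)})\,d\theta=0$ by orthogonality of the Fourier series of $(u_e^{(1)},v_e^{(1)})$ and $\int_0^{2\pi}\tilde{v}_e^{(1)}\,d\theta=0$ (equivalently, $(\tilde{u}_e^{(1)},\tilde{v}_e^{(1)})\cdot\nabla(\tilde{u}_e^{(1)},\tilde{v}_e^{(1)})$ is curl-free on the simply connected disk, hence a gradient, whose tangential component has zero $\theta$-average). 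We define $p_e^{(2)}$ by this $\theta$-integration up to a function $\phi_2(r)$, which is fixed by the zero Fourier mode of the second momentum equation (an ODE with right-hand side bounded near $r=0$) --- the same device that produced $p_e^{(1)}$ in Proposition~\ref{solvability of first Euler} --- and the nonzero Fourier modes of the second momentum equation hold automatically, being equivalent to the cross-differentiated equation above. This produces $(u_e^{(2)},v_e^{(2)},p_e^{(2)})$ solving (\ref{outer-2 order equation})--(\ref{boundary condition of second Euler}) with all the stated properties. The main obstacle is thus the identity $\partial_rN_1-\frac1r\partial_\theta N_2=0$: without the constant-vorticity modification (\ref{corrector of first order Euler equation}) the equation for $rv_e^{(2)}$ would be genuinely forced, with a source that need not vanish at the origin, and then $v_e^{(2)}=(rv_e^{(2)})/r$ would in general fail to be bounded at $r=0$.
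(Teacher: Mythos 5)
Your proposal is correct and takes essentially the same route as the paper: eliminate the pressure, observe that the quadratic self-interaction of $(\tilde{u}_e^{(1)},\tilde{v}_e^{(1)})$ drops out of the equation for $rv_e^{(2)}$ (the paper writes this contribution as $-\tilde{u}_e^{(1)}r\triangle(r\tilde{v}_e^{(1)})+\tilde{v}_e^{(1)}\bigl(r^2\triangle \tilde{u}_e^{(1)}-\tilde{u}_e^{(1)}+2\partial_\theta \tilde{v}_e^{(1)}\bigr)$ and kills it using $\triangle(r\tilde{v}_e^{(1)})=0$ together with the identity in (\ref{Estimate of modified first linearized Euler equation}), which is exactly your $\partial_rN_1-\tfrac1r\partial_\theta N_2=0$), and then repeat the Fourier-series argument of Proposition \ref{solvability of first Euler}. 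The only cosmetic remark is that your justification of the vanishing curl should cite the harmonicity $\triangle(r\tilde{v}_e^{(1)})=0$ explicitly alongside incompressibility and the identity, since the latter two alone do not imply it.
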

\begin{proof}
 Eliminating the pressure $p_e^{(2)}$ in the equation (\ref{outer-2 order equation}), we obtain
\begin{align*}
-ar^2\triangle(rv_e^{(2)})-\tilde{u}_e^{(1)}r\triangle(r\tilde{v}_e^{(1)})+\tilde{v}_e^{(1)}(r^2\triangle \tilde{u}_e^{(1)}-\tilde{u}_e^{(1)}+2\partial_\theta \tilde{v}_e^{(1)})=0.
\end{align*}
Recall that $\triangle(rv_e^{(1)})=0$ and using (\ref{Estimate of modified first linearized Euler equation}), we obtain the following equation for $rv_e^{(2)}$ in $\Omega$
\begin{align*}
\left\{\begin{array}{lll}
-ar^2\triangle(rv_e^{(2)})=0,\\[5pt]
rv_e^{(2)}|_{r=1}=-v_p^{(2)}(\theta,0).
\end{array}
\right.
\end{align*}
Then, we can complete the proof of this proposition by following the argument of Proposition \ref{solvability of first Euler} line by line, we omit the details.
\end{proof}

\subsubsection{Linearized Prandtl equations for  $(u_p^{(2)},v_p^{(3)})$ and their solvabilities}
\indent

Putting the expansion
\begin{align*}
&u^\varepsilon(\theta,r)=u_e(r)+u_p^{(0)}(\theta,Y)+\varepsilon\big[\tilde{u}_e^{(1)}(\theta,r)+\tilde{u}_p^{(1)}(\theta,Y)\big]
+\varepsilon^2\big[u_e^{(2)}(\theta,r)+u_p^{(2)}(\theta,Y)\big]+\cdots,\\[5pt]
&v^\varepsilon(\theta,r)=\varepsilon\big[\tilde{v}_e^{(1)}(\theta,r)+v_p^{(1)}(\theta,Y)\big]
+\varepsilon^2\big[v_e^{(2)}(\theta,r)+v_p^{(2)}(\theta,Y)\big]+\varepsilon^3[v_e^{(3)}(\theta,r)+v_p^{(3)}(\theta,Y)]+\cdots,\\[5pt]
&p^\varepsilon(\theta,r)=p_e(r)+\varepsilon\big[\tilde{p}_e^{(1)}(\theta,r)+p_p^{(1)}(\theta,Y)\big]
+\varepsilon^2\big[p_e^{(2)}(\theta,r)+p_p^{(2)}(\theta,Y)\big]+\varepsilon^3p_p^{(3)}(\theta,Y)+\cdots
\end{align*}
with the boundary conditions
\begin{align*}
 u_e^{(2)}(\theta,1)+u_p^{(2)}(\theta,0)=0,\ v_e^{(3)}(\theta,1)+v_p^{(3)}(\theta,0)=0, \ \lim_{Y\rightarrow -\infty}(\partial_Yu_p^{(2)},v_p^{(3)})=(0,0)
\end{align*}
into the first and third equation of (\ref{NS-curvilnear}), collecting $\varepsilon^2$-order terms together, we obtain the following linearized steady Prandtl equations for $(u_p^{(2)},v_p^{(3)})$
\begin{eqnarray}
\left \{
\begin {array}{ll}
\big(u_e(1)+u_p^{(0)}\big)\partial_\theta u_p^{(2)}+\big(v_e^{(1)}(\theta,1)+ v_p^{(1)}\big)\partial_Yu_p^{(2)}+(v_e^{(3)}(\theta,1)+v_p^{(3)})\partial_{Y}u_p^{(0)}\\[5pt]
\quad \quad \quad \quad +(u_p^{(2)}+u_e^{(2)}(\theta,1))\partial_\theta u_p^{(0)}-\partial_{YY}u_p^{(2)}=f_2(\theta,Y),\\[5pt]
\partial_\theta u_p^{(2)}+\partial_Yv_p^{(3)}+\partial_Y(Yv_p^{(2)})=0,\\[5pt]
u_p^{(2)}(\theta,Y)=u_p^{(2)}(\theta+2\pi,Y),\quad v_p^{(3)}(\theta,Y)=v_p^{(3)}(\theta+2\pi,Y),\\[5pt]
u_p^{(2)}\big|_{Y=0}=-u_e^{(2)}\big|_{r=1},\ \ \lim_{Y\rightarrow -\infty}(\partial_Yu_p^{(2)},v_p^{(3)})=(0,0),
\label{second linearized prandtl problem near 1}
\end{array}
\right.
\end{eqnarray}
where
{\small\begin{align*}
f_2(\theta,Y)=&-\partial_\theta p_p^{(2)}+Y\partial_{YY}u_p^{(1)}+\partial_Yu_p^{(1)}+\partial_{\theta\theta}u_p^{(0)}-u_p^{(0)}
-\tilde{u}_p^{(1)}\partial_\theta u_p^{(1)}-v_p^{(2)}\partial_Y u_p^{(1)}-\sum_{i+j=2}v_p^{(i)}Y\partial_Y u_p^{(j)}\\[5pt]
&-\sum_{k=0}^2\sum_{i+j=2-k, (k,j)\neq (0,2)}\Big(\frac{\partial_r^k\tilde{u}_e^{(i)}(\theta,1)}{k!}Y^k \partial_\theta \tilde{u}_p^{(j)}+\tilde{u}_p^{(j)}\frac{\partial_r^k\partial_\theta \tilde{u}_e^{(i)}(\theta,1)}{k!}Y^k\Big)+u_e^{(2)}(\theta,1)\partial_\theta u_p^{(0)}\\[5pt]
&-\sum_{k=0}^1\sum_{i+j=2-k}\Big(\frac{\partial_r^k\tilde{v}_e^{(i)}(\theta,1)}{k!}Y^{k+1}\partial_Y \tilde{u}_p^{(j)}+v_p^{(i)}\frac{\partial_r^k(r\partial_r \tilde{u}_e^{(j)})(\theta,1)}{k!}Y^k\Big)\\[5pt]
&-\sum_{k=0}^2\sum_{i+j=3-k, (k,j)\neq (0,2),(0,0)}\frac{\partial_r^k\tilde{v}_e^{(i)}(\theta,1)}{k!}Y^k \partial_Y \tilde{u}_p^{(j)}
\end{align*}}
with $\tilde{u}_p^{(0)}=u_p^{(0)}, \ \tilde{u}_e^{(0)}=u_e(r),\ \tilde{u}_e^{(2)}=u_e^{(2)}, \ \tilde{v}_e^{(2)}=v_e^{(2)}.$

\begin{Proposition}\label{decay estimates of higher order linearized Prandtl} There exists $\eta_0>0$ such that for any $\eta\in(0,\eta_0)$, equations (\ref{second linearized prandtl problem near 1}) have a unique solution $(u_p^{(2)},v_p^{(3)})$ which satisfies
\begin{align}\label{decay behavior-prandtl-2}
&\sum_{j+k\leq m}\int_{-\infty}^0\int_0^{2\pi}\Big|\partial_\theta^j\partial_Y^k (u_p^{(2)}-A_{2\infty},v_p^{(3)} )\Big|^2\big<Y\big>^{2l}d\theta dY\leq C(m,l)\eta^2, \ \ m, l\geq 0, \nonumber\\
&\int_0^{2\pi}v_p^{(3)}(\theta, Y)d\theta=0, \ \forall\ Y\leq 0,
\end{align}
where $A_{2\infty}:=\lim_{Y\rightarrow -\infty}u_p^{(2)}(\theta,Y)$ is a constant which satisfies $|A_{2\infty}|\leq C\eta$.

\end{Proposition}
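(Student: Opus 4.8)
\emph{Sketch of proof.} The system (\ref{second linearized prandtl problem near 1}) is a linearized steady Prandtl system around exactly the same background $\bar u:=u_e(1)+u_p^{(0)}$ and $\bar v:=v_e^{(1)}(\theta,1)+v_p^{(1)}$ as the system (\ref{first linearized prandtl problem near 1}); the only differences are the forcing ($f_2$ in place of $f_1$), the boundary datum ($-u_e^{(2)}|_{r=1}$ in place of $-u_e^{(1)}|_{r=1}$) and the additional divergence source $\partial_Y(Yv_p^{(2)})$. The plan is therefore to repeat the proof of Proposition \ref{decay estimates of linearized Prandtl} essentially verbatim, so the only genuinely new point is to check that $f_2$ is $2\pi$-periodic and obeys $\sum_{j+k\le m}\|\langle Y\rangle^{l}\partial_\theta^j\partial_Y^k f_2\|_2\le C(m,l)\eta$ for all $m,l\ge 0$. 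This I would do by inspecting the explicit expression for $f_2$: each summand is a product of at least one factor carrying a Prandtl profile — one of $u_p^{(0)}$, $\tilde u_p^{(1)}=u_p^{(1)}-A_{1\infty}$, $v_p^{(1)}$, $v_p^{(2)}$, $p_p^{(1)}$, $p_p^{(2)}$ or their derivatives — each of which is $O(\eta)$ and decays fast in $Y$ by (\ref{decay behavior-prandtl}), by (\ref{decay behavior-prandtl-1}) and by the construction of $p_p^{(1)},p_p^{(2)}$ (see (\ref{equation of first pressure}), (\ref{equation for second pressure})); the accompanying coefficients are either bounded constants ($a$, $u_e(1)$), the $O(\eta)$ traces $\partial_r^k\tilde u_e^{(i)}(\theta,1)$, $\partial_r^k\tilde v_e^{(i)}(\theta,1)$ supplied by Propositions \ref{solvability of first Euler} and \ref{solvability of second Euler equation} together with (\ref{Estimate of modified first linearized Euler equation}), or fixed powers of $Y$, and the powers $Y^k$ are absorbed by the fast decay. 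For $\eta$ small one still has $\tfrac{\alpha}{2}\le\bar u\le\alpha$ and $|\partial_\theta^j\partial_Y^k(\bar u-u_e(1))\langle Y\rangle^l|+|\partial_\theta^j\partial_Y^k(\bar v-v_e^{(1)}(\theta,1))\langle Y\rangle^l|+|\partial_\theta^j v_e^{(1)}(\theta,1)|\le C\eta$, which are the only quantitative features of the background used below.

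I would then homogenize the system as in (\ref{new linearized Prandtl equation}): with $\zeta\in C_c^\infty((-\infty,0])$ the cut-off used there ($\zeta(0)=1$, $\int_0^{+\infty}\zeta=0$), put $u:=u_p^{(2)}+u_e^{(2)}(\theta,1)\zeta(Y)$ and $v:=v_p^{(3)}-v_p^{(3)}(\theta,0)+Yv_p^{(2)}-\partial_\theta u_e^{(2)}(\theta,1)\int_0^Y\zeta(z)\,dz$, so that the term $Yv_p^{(2)}$ cancels the source $\partial_Y(Yv_p^{(2)})$ and $[u,v]$ solves a problem of the form (\ref{new linearized Prandtl equation}) with homogeneous boundary data and a right-hand side $\tilde f_2$ inheriting the smallness and fast decay of $f_2$. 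Next I would solve this model problem through the elliptic regularization (\ref{appro linear prandtl}): for each $\delta>0$, establish the $\delta$-uniform basic energy estimate by testing with $u^\delta$ (using $\bar u_\theta+\bar v_Y=0$ together with the Hardy inequalities $\|u^\delta/Y\|_2\le C\|\partial_Y u^\delta\|_2$ and $\|Y^{-1}\int_Y^0\partial_\theta u^\delta\,dz\|_2\le C\|\partial_\theta u^\delta\|_2$), the positivity estimate by testing with $\partial_\theta u^\delta$ (since $\bar u\ge\alpha/2$ gives $\int\bar u|\partial_\theta u^\delta|^2\ge(\alpha-C\eta)\|\partial_\theta u^\delta\|_2^2$ while the diffusion terms integrate to zero), and the second-order bound directly from the equation; combining these and sending $\delta\to0$ produces $[u_p^{(2)},v_p^{(3)}]$ with $\alpha\|\partial_\theta u\|_2^2+\|\partial_Y u\|_2^2+\|\partial_{YY}u\|_2^2\le C\|\langle Y\rangle\tilde f_2\|_2^2$, exactly as in (\ref{closed prandtl})--(\ref{new linear prandtl estimate}).

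Finally I would upgrade to the weighted, higher-order estimates. Passing to the von Mises variable $\psi=\int_0^Y\bar u(\theta,z)\,dz$ is legitimate since $\alpha/2\le\bar u\le\alpha$ (whence $\alpha/2\le|\psi/Y|\le\alpha$) and converts the equation to the uniformly parabolic form (\ref{linear prandtl in new variable}) with coefficients $a=\bar u$, $b=\partial_\theta\bar u/\bar u$, $c=\partial_Y\bar u/\bar u$, the last two fast-decaying. Testing against $\tilde u_{\neq}\psi^{2l}$ and against $(\partial_\theta\tilde u-a\,\partial_{\psi\psi}\tilde u)\psi^{2l}$, splitting off the zero $\theta$-mode and using the Hardy and Poincar\'e inequalities exactly as in (\ref{first order norm estimate})--(\ref{second estimate on norm}), I get by induction on $l$ the analogue of (\ref{weight estimate in new variable}); commuting $\partial_\theta^j\partial_Y^k$ through the equation and inducting on the order then yields $\sum_{j+k\le m}\|\langle Y\rangle^{l}\partial_\theta^j\partial_Y^k(\partial_{YY}u_p^{(2)},\partial_\theta u_p^{(2)},\partial_Y u_p^{(2)})\|_2\le C(m,l)\eta$. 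In particular $\lim_{Y\to-\infty}(\partial_\theta u_p^{(2)},\partial_Y u_p^{(2)})=0$, so $A_{2\infty}:=\lim_{Y\to-\infty}u_p^{(2)}$ is independent of $\theta$; writing $A_{2\infty}=-u_e^{(2)}(\theta,1)-\int_{-\infty}^0\partial_Y u_p^{(2)}\,dY$ gives $|A_{2\infty}|\le C\eta$, and Hardy's inequality applied to $u_p^{(2)}-A_{2\infty}$ (which vanishes at $Y=-\infty$) upgrades the derivative bounds to the full estimate (\ref{decay behavior-prandtl-2}). The identity $\int_0^{2\pi}v_p^{(3)}\,d\theta=0$ follows by integrating $\partial_\theta u_p^{(2)}+\partial_Y v_p^{(3)}+\partial_Y(Yv_p^{(2)})=0$ over $\theta$, using $\int_0^{2\pi}v_p^{(2)}\,d\theta=0$ from Proposition \ref{decay estimates of linearized Prandtl} and the decay as $Y\to-\infty$; uniqueness follows by applying the a priori estimate to the difference of two solutions. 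The main obstacle is precisely the one already surmounted in Proposition \ref{decay estimates of linearized Prandtl}: the energy identity controls only $\dot H^1$-type quantities — there is no zero-order term — so the entire argument must be run in weighted spaces with Hardy's inequality supplying the missing coercivity, and one must carefully track the many summands of $f_2$, in particular those carrying explicit powers of $Y$ and those built from $\tilde u_p^{(1)}=u_p^{(1)}-A_{1\infty}$ (which decays fast even though $u_p^{(1)}$ itself does not), to confirm that $\tilde f_2$ remains an $O(\eta)$, fast-decaying right-hand side.
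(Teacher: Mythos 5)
Your proposal is correct and follows exactly the route the paper intends: the paper's own "proof" of this proposition is a single sentence deferring to the argument of Proposition \ref{decay estimates of linearized Prandtl} after noting that $f_2$ decays fast, and your sketch is precisely that argument carried out (homogenization via the cut-off, elliptic regularization, energy plus positivity estimates, von Mises change of variables, weighted induction), together with the verification that $f_2$ is an $O(\eta)$ fast-decaying source — which is the only genuinely new ingredient. No discrepancies worth flagging.
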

The proof is same with Proposition \ref{decay estimates of linearized Prandtl} by noticing that $f_2(\theta,Y)$ is decay very fast as $Y\rightarrow -\infty$, we omit the details.

We construct the pressure $p_p^{(3)}(\theta,Y)$ by considering the equation
\begin{align}\label{equation of second pressure}
\partial_Yp_p^{(3)}(\theta, Y)=g_2(\theta,Y), \quad \lim_{Y\rightarrow -\infty}p_p^{(3)}(\theta,Y)=0,
\end{align}
where
{\small\begin{align*}
g_2(\theta,Y)=&\partial_{YY}v_p^{(2)}+Y\partial_{YY}v_p^{(1)}+\partial_Yv_p^{(1)}-2\partial_\theta u_p^{(0)}-Y\partial_Yp_p^{(2)}-\sum_{i+j=3} v_p^{(i)}\partial_Yv_p^{(j)}\\[5pt]
&-\sum_{i+j=2}\Big(\tilde{u}_p^{(i)}\partial_\theta v_p^{(j)}+v_p^{(i)}Y\partial_Yv_p^{(j)}-\tilde{u}_p^{(i)}\tilde{u}_p^{(j)}+\tilde{v}_e^{(i)}(\theta,1)Y\partial_Yv_p^{(j)}
+v_p^{(i)}\partial_r\tilde{v}_e^{(j)}(\theta,1)\Big)\\[5pt]
&-\sum_{k=0}^1\sum_{i+j=2-k}\Big(\frac{\partial_r^k\tilde{u}_e^{(i)}(\theta,1)}{k!}Y^k\partial_\theta v_p^{(j)}+\frac{\partial_r^k\partial_\theta \tilde{v}_e^{(j)}(\theta,1)}{k!}Y^k\partial_\theta \tilde{u}_p^{(i)}\Big)\\[5pt]
&-\sum_{k=0}^2\sum_{i+j=2-k}\Big(\frac{\partial_r^k\tilde{u}_e^{(i)}(\theta,1)}{k!}Y^k\tilde{u}_p^{(j)}+\frac{\partial_r^k \tilde{u}_e^{(j)}(\theta,1)}{k!}Y^k \tilde{u}_p^{(i)}\Big),
\end{align*}}
where $\tilde{u}_p^{(0)}=u_p^{(0)},  \ \tilde{u}_e^{(0)}=u_e(r),\ \tilde{u}_e^{(2)}=u_e^{(2)}+A_{2\infty}, \ \tilde{v}_e^{(2)}=v_e^{(2)}$, and here and below $\tilde{u}_p^{(2)}=u_p^{(2)}-A_{2\infty}$.  $g_2(\theta,Y)$ can be derived by the same argument as $g_1(\theta,Y)$.
Moreover, notice that $g_2(\theta,Y)$ decay fast as $Y\rightarrow -\infty$, we can obtain $p_p^{(2)}$ by solving (\ref{equation of second pressure}) and $p_p^{(2)}$ also decay fast as $Y\rightarrow -\infty$.

\subsubsection{Linearized Euler equations for $(u_e^{(3)},v_e^{(3)},p_e^{(3)})$ and their solvabilities}
\indent

Let $\phi_2(s)=-A_{2\infty}\big(r\chi''(r)+\chi'(r)-\frac{\chi(r)}{r}\big)$ and
\begin{align*}
A_2(r):=a_2r+r\int_0^r\frac{\phi_2(s)}{2s}-\frac{1}{r}\int_0^r\frac{s\phi_2(s)}{2}ds,
\end{align*}
where $a_2$ is a constant such that $A_2(1)=0$. Obviously, $|a_2|\leq C\eta$,
\begin{align}\label{corrector of second order Euler equation}
\left\{
\begin{array}{ll}
rA''_2(r)+A'_2(r)-\frac{A_2(r)}{r}=-\phi_2(r), \ 0<r<1\\[5pt]
A_2(1)=0,
 \end{array}
 \right.
\end{align}
and $\|\partial_r^kA_2(r)\|_\infty\leq C(k)\eta.$ Moreover, notice that $\chi(r)=0$ for $r\leq \frac12$, we deduce that $A_2(r)=a_2r$ for $r\leq \frac12$.

Set
\begin{align*}
\tilde{u}_e^{(2)}(\theta,r):&=u_e^{(2)}(\theta,r)+\chi(r)A_{2\infty}+A_2(r), \\[5pt]
 \tilde{v}_e^{(2)}(\theta,r):&=v_e^{(2)}(\theta,r),\\
\tilde{p}_e^{(2)}(\theta,r):&=p_e^{(2)}(\theta,r)+2a\int_{0}^r[\chi(s)A_{2\infty}+A_2(s)]ds,
\end{align*}
then $(\tilde{u}_e^{(2)},\tilde{v}_e^{(2)},\tilde{p}_e^{(2)})$ also satisfies the linearized Euler equations (\ref{outer-2 order equation}) with the boundary condition (\ref{boundary condition of second Euler}). Moreover, there holds
\begin{align}\label{Estimate of modified second linearized Euler equation}
\begin{aligned}
|\partial_\theta \tilde{u}_e^{(2)}+\tilde{v}_e^{(2)}|(\theta,r)\leq& C\eta r, \ |\partial_\theta \tilde{v}_e^{(2)}-\tilde{u}_e^{(2)}|(\theta,r)\leq C\eta r,  \ \forall (\theta, r)\in \Omega,\\[5pt]
 \|\partial^k_\theta\partial^j_r(\tilde{u}_e^{(2)},\tilde{v}_e^{(2)})\|_2\leq& C(k,j)\eta, \quad \forall j,k\geq 0;\\
 r^2\triangle \tilde{u}_e^{(2)}-\tilde{u}_e^{(2)}+2\partial_\theta &\tilde{v}_e^{(2)}=0, \quad \int_0^{2\pi}\tilde{v}_e^{(2)}d\theta=0.
 \end{aligned}
\end{align}
Putting
\begin{align*}
&u^{\varepsilon}(\theta,r)=u_e(r)+\sum_{i=1}^2\varepsilon^{i} \tilde{u}_e^{(i)}(\theta,r)+\varepsilon^3u_e^{(3)}+\cdots,\\[5pt]  &v^{\varepsilon}(\theta,r)=\sum_{i=1}^2\varepsilon^{i} \tilde{v}_e^{(i)}(\theta,r)+\varepsilon^3v_e^{(3)}+\cdots, \\[5pt]
&p^{\varepsilon}(\theta,r)=p_e(r)+\sum_{i=1}^2\varepsilon^{i} \tilde{p}_e^{(i)}(\theta,r)+\varepsilon^3p_e^{(3)}+\cdots
\end{align*}
into the Navier-Stokes equations (\ref{NS-curvilnear}), we find that $(u_e^{(3)},v_e^{(3)},p_e^{(3)})$ satisfies the following linearized Euler equations in $\Omega$
\begin{eqnarray}
\left \{
\begin {array}{ll}
ar \partial_\theta u_e^{(3)}+2arv_e^{(3)}+\partial_\theta p_e^{(3)}+f_e(\theta,r)=0,\\[5pt]
ar \partial_\theta v_e^{(3)}-2aru_e^{(3)}+r\partial_rp_e^{(3)}+g_e(\theta,r)=0,\\[7pt]
\partial_\theta u_e^{(3)}+r\partial_rv_e^{(3)}+ v_e^{(3)}=0,\label{outer-3 order Euler equation}
\end{array}
\right.
\end{eqnarray}
and equipped with the boundary conditions
\begin{align}\label{boundary condition of third Euler}
rv_e^{(3)}|_{r=1}=-v_p^{(3)}|_{Y=0}, \quad v_e^{(3)}(\theta,r)=v_e^{(3)}(\theta+2\pi,r),
\end{align}
where
\begin{align*}
f_e(\theta,r)=&\tilde{u}_e^{(1)}\partial_\theta \tilde{u}_e^{(2)}+\tilde{u}_e^{(2)}\partial_\theta \tilde{u}_e^{(1)}+\tilde{v}_e^{(1)}r\partial_r\tilde{u}_e^{(2)}+\tilde{v}_e^{(2)}r\partial_r\tilde{u}_e^{(1)}
+\tilde{u}_e^{(1)}\tilde{v}_e^{(2)}+\tilde{u}_e^{(2)}\tilde{v}_e^{(1)}\\
&-\underbrace{\Big(\frac{\partial_{\theta\theta}\tilde{u}_e^{(1)}}{r}+r\partial_{rr}\tilde{u}_e^{(1)}+\partial_r\tilde{u}_e^{(1)}
+\frac{2}{r}\partial_\theta \tilde{v}_e^{(1)}-\frac{\tilde{u}_e^{(1)}}{r}\Big)}_{I_1},\\[5pt]
g_e(\theta,r)=&\tilde{u}_e^{(1)}\partial_\theta \tilde{v}_e^{(2)}+\tilde{u}_e^{(2)}\partial_\theta \tilde{v}_e^{(1)}+\tilde{v}_e^{(1)}r\partial_r \tilde{v}_e^{(2)}+\tilde{v}_e^{(2)}r\partial_r \tilde{v}_e^{(1)}-2\tilde{u}_e^{(1)}\tilde{u}_e^{(2)}\\[5pt]
&-\underbrace{\Big(\frac{\partial_{\theta\theta}\tilde{v}_e^{(1)}}{r}+r\partial_{rr}\tilde{v}_e^{(1)}+\partial_r\tilde{v}_e^{(1)}
-\frac{2}{r}\partial_\theta \tilde{u}_e^{(1)}-\frac{\tilde{v}_e^{(1)}}{r}\Big)}_{I_2}.
\end{align*}
We claim that $I_1=I_2=0$. In fact,
\begin{align*}
I_1=\frac{1}{r}[r^2\triangle \tilde{u}_e^{(1)} -\tilde{u}_e^{(1)}+2\partial_\theta \tilde{v}_e^{(1)} ]=0, \ \ I_2=\triangle(rv_e^{(1)})-\frac{2(\partial_\theta u^{(1)}_e+\partial_r(rv^{(1)}_e))}{r}=0,
\end{align*}
where we used (\ref{Estimate of modified first linearized Euler equation}).
\begin{Proposition}\label{solvability of third linearized Euler equation}
The linearized Euler equations  (\ref{outer-3 order Euler equation}) have a solution $(u_e^{(3)}, v_e^{(3)}, p_e^{(3)})$ which satisfies
\begin{align}\label{Estimate of third linearized Euler equation}
|\partial_\theta u_e^{(3)}+v_e^{(3)}|(\theta,r)\leq& C\eta r, \ |\partial_\theta v_e^{(3)}-u_e^{(3)}|(\theta,r)\leq C\eta r,  \ \forall (\theta, r)\in \Omega,\nonumber\\[5pt]
\|\partial^k_\theta\partial^j_r(u_e^{(3)},v_e^{(3)})\|_2\leq& C(j,k)\eta,  \quad j,k\geq 0;\\
r^2\triangle u_e^{(3)} -u_e^{(3)}+2\partial_\theta &v_e^{(3)}=0, \quad\int_0^{2\pi}v^{(3)}_ed\theta=0.\nonumber
\end{align}
\end{Proposition}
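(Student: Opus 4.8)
The plan is to follow the strategy of Propositions \ref{solvability of first Euler} and \ref{solvability of second Euler equation} almost verbatim: first eliminate $p_e^{(3)}$ to obtain an elliptic equation for $rv_e^{(3)}$, then solve it by a Fourier series in $\theta$ and reconstruct $u_e^{(3)}$ by the conjugate formula, and finally recover $p_e^{(3)}$ from the momentum equations. For the first step we would apply $r\partial_r$ to the first equation of (\ref{outer-3 order Euler equation}), $\partial_\theta$ to the second, subtract, and use the divergence-free relation $\partial_\theta u_e^{(3)}=-\partial_r(rv_e^{(3)})$. Since $I_1=I_2=0$, the forcings $f_e,g_e$ reduce to their bilinear parts, so by bilinearity together with the polar-coordinate computation already carried out for Proposition \ref{solvability of second Euler equation} we would arrive at
\begin{align*}
-ar^2\triangle(rv_e^{(3)})=\sum_{\{i,j\}=\{1,2\}}\Big[\tilde{u}_e^{(i)}\,r\triangle(r\tilde{v}_e^{(j)})-\tilde{v}_e^{(i)}\big(r^2\triangle\tilde{u}_e^{(j)}-\tilde{u}_e^{(j)}+2\partial_\theta\tilde{v}_e^{(j)}\big)\Big],
\end{align*}
and each bracket vanishes because $\triangle(r\tilde{v}_e^{(j)})=\triangle(rv_e^{(j)})=0$ for $j=1,2$ (Propositions \ref{solvability of first Euler} and \ref{solvability of second Euler equation}) while $r^2\triangle\tilde{u}_e^{(j)}-\tilde{u}_e^{(j)}+2\partial_\theta\tilde{v}_e^{(j)}=0$ for $j=1,2$ by (\ref{Estimate of modified first linearized Euler equation}) and (\ref{Estimate of modified second linearized Euler equation}). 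Hence $\triangle(rv_e^{(3)})=0$ in $\Omega$.

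Next we would solve for the velocity. By (\ref{decay behavior-prandtl-2}) one has $\int_0^{2\pi}v_p^{(3)}(\theta,0)\,d\theta=0$, so writing $-v_p^{(3)}(\theta,0)=\sum_{n\ge1}[a_{n3}\cos(n\theta)+b_{n3}\sin(n\theta)]$ the weighted decay estimates in (\ref{decay behavior-prandtl-2}) give $|a_{n3}|+|b_{n3}|\le C\eta\,n^{-k}$ for every $k$. As in Proposition \ref{solvability of first Euler} we would set
\begin{align*}
v_e^{(3)}(\theta,r)=\sum_{n\ge1}r^{n-1}[a_{n3}\cos(n\theta)+b_{n3}\sin(n\theta)],\qquad u_e^{(3)}(\theta,r)=\sum_{n\ge1}r^{n-1}[-a_{n3}\sin(n\theta)+b_{n3}\cos(n\theta)].
\end{align*}
The rapid decay of the coefficients makes these series, together with all their derivatives, converge uniformly on $\Omega$ up to $r=1$ and $r=0$, and one checks mode by mode that $rv_e^{(3)}$ solves $\triangle(rv_e^{(3)})=0$ with $rv_e^{(3)}|_{r=1}=-v_p^{(3)}|_{Y=0}$, that $\partial_\theta u_e^{(3)}+\partial_r(rv_e^{(3)})=0$, that $r^2\triangle u_e^{(3)}-u_e^{(3)}+2\partial_\theta v_e^{(3)}=0$, and that $\int_0^{2\pi}v_e^{(3)}\,d\theta=0$. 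The estimate $\|\partial_\theta^k\partial_r^j(u_e^{(3)},v_e^{(3)})\|_2\le C(j,k)\eta$ follows from the decay of $a_{n3},b_{n3}$, and since $\partial_\theta u_e^{(3)}+v_e^{(3)}$ and $\partial_\theta v_e^{(3)}-u_e^{(3)}$ involve only modes $n\ge2$, for which $r^{n-1}\le r$ on $(0,1]$, we would obtain $|\partial_\theta u_e^{(3)}+v_e^{(3)}|+|\partial_\theta v_e^{(3)}-u_e^{(3)}|\le C\eta r$; this is exactly (\ref{Estimate of third linearized Euler equation}).

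Finally we would recover the pressure by setting
\begin{align*}
p_e^{(3)}(\theta,r):=\phi_3(r)-\int_0^\theta\big[ar\,\partial_{\theta'}u_e^{(3)}+2arv_e^{(3)}+f_e\big](\theta',r)\,d\theta',
\end{align*}
where $\phi_3$ is determined by a first-order ODE in $r$ so that the second equation of (\ref{outer-3 order Euler equation}) holds at $\theta=0$. For $p_e^{(3)}$ to be $2\pi$-periodic we need $\int_0^{2\pi}[ar\partial_\theta u_e^{(3)}+2arv_e^{(3)}+f_e]\,d\theta=0$; the first two terms have zero mean, and for the third one would use $\int_0^{2\pi}v_e^{(i)}\,d\theta=0$ (which kills the radial correctors $\chi A_{i\infty}+A_i$) together with the irrotationality $\partial_r(ru_e^{(i)})=\partial_\theta v_e^{(i)}$ of the unmodified fields $(u_e^{(i)},v_e^{(i)})$, $i=1,2$, which reduces $\int_0^{2\pi}f_e\,d\theta$ to $\int_0^{2\pi}\partial_\theta(v_e^{(1)}v_e^{(2)})\,d\theta=0$. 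With this choice the first equation of (\ref{outer-3 order Euler equation}) holds by construction, and differentiating the residual of the second equation in $\theta$, using the first equation together with $\triangle(rv_e^{(3)})=0$ and the vanishing of the bilinear forcing from the first step, shows that residual is $\theta$-independent, hence identically zero since it vanishes at $\theta=0$. This produces a solution $(u_e^{(3)},v_e^{(3)},p_e^{(3)})$ of (\ref{outer-3 order Euler equation})--(\ref{boundary condition of third Euler}) with all the stated properties.

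The hard part, and essentially the only point genuinely new compared with Propositions \ref{solvability of first Euler} and \ref{solvability of second Euler equation}, is verifying that the inhomogeneity produced by eliminating the pressure vanishes identically: this requires correctly isolating the bilinear structure of $(f_e,g_e)$ once $I_1=I_2=0$ is used, and it hinges on \emph{both} $\triangle(rv_e^{(1)})$ and $\triangle(rv_e^{(2)})$ being zero and on \emph{both} algebraic identities $r^2\triangle\tilde{u}_e^{(j)}-\tilde{u}_e^{(j)}+2\partial_\theta\tilde{v}_e^{(j)}=0$ holding. A secondary subtlety is the compatibility condition $\int_0^{2\pi}f_e\,d\theta=0$ needed to define $p_e^{(3)}$ as a periodic function, which is where the irrotationality of the lower-order Euler corrections enters.
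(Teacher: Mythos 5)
Your proposal follows the paper's proof essentially verbatim: eliminate $p_e^{(3)}$ to reduce to $-ar^2\triangle(rv_e^{(3)})=0$ via $\triangle(r\tilde{v}_e^{(j)})=0$ and the identities $r^2\triangle\tilde{u}_e^{(j)}-\tilde{u}_e^{(j)}+2\partial_\theta\tilde{v}_e^{(j)}=0$ for $j=1,2$, then solve by the Fourier-series construction of Proposition \ref{solvability of first Euler} and recover the pressure as there. Your explicit verification of the periodicity compatibility $\int_0^{2\pi}f_e\,d\theta=0$ is a correct detail that the paper leaves implicit in its ``same argument'' citation, but it does not change the route.
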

\begin{proof}
Using the same argument as Proposition \ref{solvability of second Euler equation}, we deduce that
\begin{align*}
\left\{\begin{array}{lll}
-ar^2\triangle(rv_e^{(3)})+v_e^{(2)}(r^2\triangle \tilde{u}_e^{(1)}-\tilde{u}_e^{(1)}+2\partial_\theta \tilde{v}_e^{(1)})+v_e^{(1)}(r^2\triangle \tilde{u}_e^{(2)}-\tilde{u}_e^{(2)}+2\partial_\theta \tilde{v}_e^{(2)})=0,\\[5pt]
rv_e^{(3)}|_{r=1}=-v_p^{(3)}(\theta,0).
\end{array}
\right.
\end{align*}
where we used $\triangle(rv_e^{(1)})=\triangle(rv_e^{(2)})=0$.
Using (\ref{Estimate of modified first linearized Euler equation}) and (\ref{Estimate of modified second linearized Euler equation}), we obtain the following equation for $rv_e^{(3)}$ in $\Omega$
  \begin{align*}
\left\{\begin{array}{lll}
-ar^2\triangle(rv_e^{(3)})=0,\\[5pt]
rv_e^{(3)}|_{r=1}=-v_p^{(3)}(\theta,0).
\end{array}
\right.
\end{align*}
Thus, we can complete the proof of this proposition by the same argument as Proposition \ref{solvability of first Euler}.
\end{proof}

\subsubsection{Linearize Prandtl equations for $(u_p^{(3)},v_p^{(4)})$ and their solvabilities}

\indent

Let
\begin{align*}
&u^\varepsilon(\theta,r)=u_e(r)+u_p^{(0)}(\theta,Y)+\sum_{i=1}^2\varepsilon^i\big[\tilde{u}_e^{(i)}(\theta,r)+\tilde{u}_p^{(i)}(\theta,Y)\big]
+\varepsilon^3[u_e^{(3)}(\theta,r)+ u_p^{(3)}(\theta,Y)\big]+\cdots,\\[5pt]
&v^\varepsilon(\theta,r)=\sum_{i=1}^2\varepsilon^i\big[\tilde{v}_e^{(i)}(\theta,r)+v_p^{(i)}(\theta,Y)\big]
+\varepsilon^3\big[v_e^{(i)}(\theta,r)+v_p^{(i)}(\theta,Y)\big]
+\varepsilon^4[v_e^{(4)}+v_p^{(4)}(\theta,Y)]+\cdots,\\[5pt]
&p^\varepsilon(\theta,r)=p_e(r)+\sum_{i=1}^2\varepsilon^i\big[\tilde{p}_e^{(i)}(\theta,r)+p_p^{(i)}(\theta,Y)\big]+
\varepsilon^3\big[p_e^{(3)}(\theta,r)+p_p^{(3)}(\theta,Y)\big]
+\varepsilon^4p_p^{(4)}(\theta,Y)+\cdots
\end{align*}
with the following boundary conditions
\begin{align*}
 u_e^{(3)}(\theta,1)+u_p^{(3)}(\theta,0)=0, \ v_e^{(4)}(\theta,1)+v_p^{(4)}(\theta,0)=0, \ \lim_{Y\rightarrow \infty}(\partial_Yu_p^{(3)},v_p^{(4)})=(0,0).
\end{align*}
As the derivation of equations for $(u_p^{(2)},v_p^{(3)})$, we obtain the following linearized Prandtl problem for $(u_p^{(3)},v_p^{(4)})$
\begin{eqnarray}
\left \{
\begin {array}{ll}
\big(u_e(1)+u_p^{(0)}\big)\partial_\theta u_p^{(3)}+\big(v_e^{(1)}(\theta,1)+ v_p^{(1)}\big)\partial_Yu_p^{(3)}+(u_p^{(3)}+u_e^{(3)}(\theta,1))\partial_\theta u_p^{(0)} \\[5pt]
\quad \quad \quad \quad (v_p^{(4)}+v_e^{(4)}(\theta,1))\partial_{Y}u_p^{(0)} -\partial_{YY}u_p^{(3)}=f_3(\theta,Y)\\[5pt]
\partial_\theta u_p^{(3)}+\partial_Yv_p^{(4)}+\partial_Y(Yv_p^{(3)})=0,\\[5pt]
u_p^{(3)}(\theta,Y)=u_p^{(3)}(\theta+2\pi,Y),\quad v_p^{(4)}(\theta,Y)=v_p^{(4)}(\theta+2\pi,Y),\\[5pt]
u_p^{(3)}\big|_{Y=0}=-u_e^{(3)}\big|_{r=1},\ \ \lim_{Y\rightarrow \infty}(\partial_Yu_p^{(3)},v_p^{(4)})=(0,0)
\label{third linearized prandtl problem near 1}
\end{array}
\right.
\end{eqnarray}
where
{\small \begin{align*}
f_3(\theta,Y)=&-\partial_\theta p_p^{(3)}+Y\partial_{YY}u_p^{(3)}+\partial_Yu_p^{(3)}+\sum_{k=0}^1(-1)^k\frac{Y^k\partial_{\theta\theta}\tilde{u}_p^{(1-k)}}{k!}+2\partial_\theta v_p^{(2)}-2Y\partial_\theta v_p^{(1)}\\[5pt]
&-\sum_{k=0}^1(-1)^k\frac{Y^k\tilde{u}_p^{(1-k)}}{k!}-\sum_{i+j=3, 1\leq i\leq 2}\tilde{u}_p^{(i)}\partial_\theta\tilde{u}_p^{(j)}-\sum_{i+j=3 }[v_p^{(i)}Y\partial_Y\tilde{u}_p^{(j)}+\tilde{u}_p^{(i)}v_p^{(j)}]\\[5pt]
&-\sum_{k=0}^3\sum_{i+j=3-k, (k,j)\neq (0,3)}\Big(\frac{\partial_r^k\tilde{u}_e^{(i)}(\theta,1)}{k!}Y^k\partial_\theta\tilde{u}_p^{(j)}
+\tilde{u}_p^{(i)}\frac{\partial_r^k\partial_\theta\tilde{u}_e^{(j)}(\theta,1)}{k!}Y^k\Big)\\[5pt]
&-\sum_{k=0}^3\sum_{i+j=3-k}\Big(\frac{\partial_r^k\tilde{v}_e^{(i)}(\theta,1)}{k!}Y^{k+1}\partial_Y\tilde{u}_p^{(j)}
+v_p^{(i)}\frac{\partial_r^k(r\partial_r\tilde{u}_e^{(j)})(\theta,1)}{k!}Y^k\Big)\\[5pt]
&-\sum_{k=0}^3\sum_{i+j=4-k, (k,j)\neq (0,3),(0,0),i\leq 3}\frac{\partial_r^k\tilde{v}_e^{(i)}(\theta,1)}{k!}Y^k\partial_Y\tilde{u}_p^{(j)}\\[5pt]
&-\sum_{k=0}^3\sum_{i+j=3-k}\Big(\frac{\partial_r^k\tilde{u}_e^{(i)}(\theta,1)}{k!}Y^{k}v_p^{(j)}
+\tilde{u}_p^{(i)}\frac{\partial_r^k\tilde{v}_e^{(j)}(\theta,1)}{k!}Y^k\Big)
\end{align*}}
with $\tilde{u}_p^{(0)}=u_p^{(0)},\ \tilde{u}^{(0)}_e=u_e(r), \ (\tilde{u}^{(3)}_e,\tilde{v}^{(3)}_e)=(u_e^{(3)},v_e^{(3)}).$

\begin{Proposition} There exists $\eta_0>0$ such that for any $\eta\in(0,\eta_0)$, equations (\ref{third linearized prandtl problem near 1}) have a unique solution $(u_p^{(3)},v_p^{(4)})$ which satisfies
\begin{align}\label{decay behavior-prandtl-3}
&\sum_{j+k\leq m}\int_{-\infty}^0\int_0^{2\pi}\big|\partial_\theta^j\partial_Y^k (u_p^{(3)}-A_{3\infty},v_p^{(4)} )\big|^2\big<Y\big>^{2l}d\theta dY\leq C(m,l)\eta^2, \ \ m, l\geq 0, \nonumber\\
& \int_0^{2\pi}v_p^{(3)}(\theta,Y)d\theta=0, \ \forall \ Y\leq 0,
\end{align}
where $A_{3\infty}:=\lim_{Y\rightarrow -\infty} u_p^{(3)}$ is a constant which satisfies $|A_{3\infty}|\leq C\eta$.
\end{Proposition}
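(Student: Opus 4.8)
The statement has exactly the same structure as Proposition~\ref{decay estimates of linearized Prandtl} and Proposition~\ref{decay estimates of higher order linearized Prandtl}: equation \eqref{third linearized prandtl problem near 1} is a linear transport--diffusion problem for $u_p^{(3)}$ whose principal part is $\big(u_e(1)+u_p^{(0)}\big)\partial_\theta-\partial_{YY}$, with lower-order terms built from profiles that have already been constructed (the order of construction being $(u_p^{(0)},v_p^{(1)})\to(\tilde u_p^{(1)},v_p^{(2)})\to(\tilde u_p^{(2)},v_p^{(3)})\to(u_e^{(3)},v_e^{(3)})\to p_p^{(3)}$), an inhomogeneous Dirichlet condition at $Y=0$, and the usual decay requirement at $Y\to-\infty$. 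Accordingly, the plan is to run the proof of Proposition~\ref{decay estimates of linearized Prandtl} essentially verbatim. The single genuinely new ingredient is the verification that the forcing $f_3$ is admissible, i.e.
$$\sum_{j+k\le m}\big\|\big<Y\big>^{l}\,\partial_\theta^j\partial_Y^k f_3\big\|_2\le C(m,l)\,\eta,\qquad m,l\ge 0 .$$
This follows by inspecting the (long) formula for $f_3$ term by term: every summand is a product of a power of $Y$ (polynomial growth), a boundary trace of an Euler quantity, which is $O(\eta)$ and smooth in $\theta$ by \eqref{Estimate of modified first linearized Euler equation}, \eqref{Estimate of modified second linearized Euler equation}, \eqref{Estimate of third linearized Euler equation}, and a $\theta$- or $Y$-derivative of a Prandtl profile (or of $p_p^{(3)}$), which decays faster than any polynomial in $Y$ and is $O(\eta)$ by \eqref{decay behavior-prandtl}, \eqref{decay behavior-prandtl-1}, \eqref{decay behavior-prandtl-2} and the fast decay of $p_p^{(3)}$; hence each product is $O(\eta)$ and decays faster than any polynomial, and the same holds after applying $\partial_\theta^j\partial_Y^k$ and multiplying by $\big<Y\big>^l$. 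This is the step that requires the most bookkeeping, but it is entirely routine.

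With that in hand, I would first lift the boundary data exactly as in Proposition~\ref{decay estimates of linearized Prandtl}: choose $\eta\in C_c^\infty((-\infty,0])$ with $\eta(0)=1$ and $\int_0^{+\infty}\eta(y)\,dy=0$, set $u:=u_p^{(3)}+u_e^{(3)}(\theta,1)\,\eta(Y)$ and $v:=v_p^{(4)}-v_p^{(4)}(\theta,0)+Yv_p^{(3)}-\partial_\theta u_e^{(3)}(\theta,1)\int_0^Y\eta(z)\,dz$, so that $(u,v)$ solves a divergence-free system of the form \eqref{new linearized Prandtl equation} with a new forcing $\tilde f_3$ that still obeys the weighted $O(\eta)$ bound above. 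Then introduce the elliptic regularization \eqref{appro linear prandtl} with parameter $\delta>0$ in $\dot H^1_0$, and establish the two energy identities: testing against $u^\delta$ and using the Hardy inequality together with $\bar u_\theta+\bar v_Y=0$ gives $\|\partial_Y u^\delta\|_2^2+\delta\|\partial_\theta u^\delta\|_2^2\le C\eta\|\partial_\theta u^\delta\|_2^2+C\|Y\tilde f_3\|_2^2$ as in \eqref{energy estimate prandtl}, while testing against $\partial_\theta u^\delta$ exploits $\bar u\ge\alpha/2$ (the diffusion terms integrate to zero) to give $\alpha\|\partial_\theta u^\delta\|_2^2\le C\eta\|\partial_Y u^\delta\|_2^2+C\|\tilde f_3\|_2^2$ as in \eqref{positive estimate prandtl}; for $\eta$ small these combine to $\alpha\|\partial_\theta u^\delta\|_2^2+\|\partial_Y u^\delta\|_2^2+\delta\|\partial_\theta u^\delta\|_2^2\le C\|\big<Y\big>\tilde f_3\|_2^2$, and the equation itself then controls $\|\partial_{YY}u^\delta\|_2$. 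Letting $\delta\to0$ yields a solution $(u,v)$ with $v=\int_Y^0\partial_\theta u\,dz$ of the reduced homogeneous system satisfying $\alpha\|\partial_\theta u\|_2^2+\|\partial_Y u\|_2^2+\|\partial_{YY}u\|_2^2\le C\|\big<Y\big>\tilde f_3\|_2^2$.

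Finally I would upgrade this to the weighted, higher-order estimate \eqref{decay behavior-prandtl-3} via the von Mises change of variable $\psi=\int_0^Y\bar u(\theta,z)\,dz$, under which the equation takes the divergence form \eqref{linear prandtl in new variable} with $a=\bar u\in[\alpha/2,\alpha]$ and $b,c$ decaying faster than any polynomial. An induction on the weight exponent $l$ (testing against $\tilde u_{\neq}\psi^{2l}$ and using Hardy/Poincar\'e, exactly as in \eqref{first order norm estimate}--\eqref{second estimate on norm}) gives $\|\partial_\theta\tilde u\,\psi^l\|_2+\|\partial_\psi\tilde u_{\neq}\psi^l\|_2+\|\partial_{\psi\psi}\tilde u\,\psi^l\|_2\le C(l)\|F\big<\psi\big>^{l+1}\|_2$, followed by a further induction on the number of $\theta$-derivatives. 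Transferring back to the $Y$ variable yields \eqref{decay behavior-prandtl-3} for $u_p^{(3)}-A_{3\infty}$, where $A_{3\infty}:=\lim_{Y\to-\infty}u_p^{(3)}(\theta,Y)$ is independent of $\theta$ because $\lim_{Y\to-\infty}\partial_\theta u_p^{(3)}=0$, and $|A_{3\infty}|\le C\eta$ follows from the Hardy inequality $\|\big<Y\big>^{l-2}(u-A_{3\infty})\|_2\le C\|\big<Y\big>^{l-1}\partial_Y u\|_2$; the identity $\int_0^{2\pi}v_p^{(4)}(\theta,Y)\,d\theta=0$ is immediate from $\partial_Y v_p^{(4)}=-\partial_\theta u_p^{(3)}-\partial_Y(Yv_p^{(3)})$, the decay at $-\infty$, and $\int_0^{2\pi}v_p^{(3)}\,d\theta=0$. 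The main obstacle is the first step, the term-by-term admissibility check on $f_3$; once that is granted, everything else is the machinery of Proposition~\ref{decay estimates of linearized Prandtl} applied without change.
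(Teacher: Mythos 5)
Your proposal is correct and follows exactly the route the paper intends: the paper's own ``proof'' of this Proposition is the single sentence that it is the same as Proposition \ref{decay estimates of linearized Prandtl} (noting only that $f_3$ decays fast as $Y\rightarrow-\infty$), and your write-up is a faithful expansion of that argument, correctly isolating the admissibility of $f_3$ as the only genuinely new step. (Incidentally, the zero-average identity in the statement should read $\int_0^{2\pi}v_p^{(4)}\,d\theta=0$, as you in fact prove; the $v_p^{(3)}$ in the displayed statement appears to be a typo in the paper.)
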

The proof is same with Proposition \ref{decay estimates of linearized Prandtl}, we omit the details.
Moreover, we can construct $p_p^{(4)}(\theta,Y)$ by solving the equation
\begin{align*}
\partial_Yp_p^{(4)}(\theta, Y)=g_3(\theta,Y), \quad \lim_{Y\rightarrow -\infty}p_p^{(4)}(\theta,Y)=0,
\end{align*}
where
{\small \begin{align*}
g_3(\theta,Y)=&\partial_{YY}v_p^{(3)}+Y\partial_{YY}v_p^{(2)}+\partial_{Y}v_p^{(2)}
+\partial_{\theta\theta}v_p^{(1)}-\sum_{k=0}^1(-1)^k\frac{Y^k}{k!}\partial_\theta \tilde{u}_p^{(2-k)}-v_p^{(1)}-Y\partial_Yp_p^{(3)}\\[5pt]
&-\sum_{i+j=3}[\tilde{u}_p^{(i)}\partial_\theta v_p^{(j)}+v_p^{(i)}Y\partial_Y v_p^{(j)}-\tilde{u}_p^{(i)}\tilde{u}_p^{(j)}]-\sum_{i+j=4} v_p^{(i)}Y\partial_Y v_p^{(j)}-\sum_{k=0}^2\sum_{i+j=4-k}\frac{\partial_r^k\tilde{v}_e^{(i)}(\theta,1)}{k!}Y^{k}\partial_Y v_p^{(j)}\\[5pt]
&-\sum_{k=0}^3\sum_{i+j=3-k}\Big(\frac{\partial_r^k\tilde{u}_e^{(i)}(\theta,1)}{k!}Y^{k}\partial_\theta v_p^{(j)}
+\tilde{u}_p^{(i)}\frac{\partial_r^k\partial_\theta \tilde{v}_e^{(j)}(\theta,1)}{k!}Y^k\Big)\\[5pt]
&+\sum_{k=0}^3\sum_{i+j=3-k}\Big(\frac{\partial_r^k\tilde{u}_e^{(i)}(\theta,1)}{k!}Y^{k}\tilde{u}_p^{(j)}
+\tilde{u}_p^{(i)}\frac{\partial_r^k\tilde{u}_e^{(j)}(\theta,1)}{k!}Y^k\Big)\\[5pt]
&-\sum_{k=0}^1\sum_{i+j=3-k}\Big(\frac{\partial_r^k\tilde{v}_e^{(i)}(\theta,1)}{k!}Y^{k+1}\partial_Y v_p^{(j)}
+v_p^{(i)}\frac{\partial_r^k(r\partial_r \tilde{v}_e^{(j)})(\theta,1)}{k!}Y^k\Big)
\end{align*}}
with $\tilde{u}_p^{(0)}=u_p^{(0)},\ \tilde{u}^{(0)}_e=u_e(r), \ (\tilde{u}^{(3)}_e,\tilde{v}^{(3)}_e)=(u_e^{(3)}+A_{3\infty},v_e^{(3)})$, and here and below $\tilde{u}_p^{(3)}=u_p^{(3)}-A_{3\infty}.$
$g_3(\theta,Y)$ was obtained by the same argument as $g_1(\theta,Y)$. Moreover, due to $g_3(\theta,Y)$ decay fast as $Y\rightarrow -\infty$, we can obtain $p_p^{(4)}$ and deduce that $p_p^{(4)}$ also decay fast as $Y\rightarrow -\infty$.

\subsubsection{Linearized Euler equations for $(u_e^{(4)},v_e^{(4)},p_e^{(4)})$ and their solvabilities}
\indent

Let $\phi_3(s)=-A_{3\infty}\big(r\chi''(r)+r\chi'(r)-\frac{\chi(r)}{r}\big)$ and
\begin{align*}
A_3(r):=a_3r+r\int_0^r\frac{\phi_3(s)}{2s}-\frac{1}{r}\int_0^r\frac{s\phi_3(s)}{2}ds,
\end{align*}
where $a_3$ is a constant such that $A_3(1)=0$. Obviously, $|a_3|\leq C\eta$,
\begin{align}\label{corrector of second order Euler equation}
\left\{
\begin{array}{ll}
rA''_3(r)+A'_3(r)-\frac{A_3(r)}{r}=-\phi_3(r), \ 0<r\leq1 \\[5pt]
A_3(1)=0,
 \end{array}
 \right.
\end{align}
and $\|\partial_r^kA_3(r)\|_\infty\leq C(k)\eta.$ Moreover, notice that $\chi(r)=0$ for $r\leq \frac12$, we deduce that $A_3(r)=a_3r$ for $r\leq \frac12$.

Set
\begin{align*}
\tilde{u}_e^{(3)}(\theta,r):&=u_e^{(3)}(\theta,r)+\chi(r)A_{3\infty}+A_3(r), \\[5pt]
 \tilde{v}_e^{(3)}(\theta,r):&=v_e^{(3)}(\theta,r),\\
\tilde{p}_e^{(3)}(\theta,r):&=p_e^{(3)}(\theta,r)+2a\int_{0}^r[\chi(s)A_{3\infty}+A_3(s)]ds,
\end{align*}
then $(\tilde{u}_e^{(3)},\tilde{v}_e^{(3)},\tilde{p}_e^{(3)})$ also satisfies the linearized Euler equations (\ref{outer-3 order Euler equation}) with the boundary condition (\ref{boundary condition of third Euler}). Moreover, there holds
\begin{align}\label{Estimate of modified third linearized Euler equation}
|\partial_\theta \tilde{u}_e^{(3)}+\tilde{v}_e^{(3)}|(\theta,r)\leq& C\eta r, \ |\partial_\theta \tilde{v}_e^{(3)}-\tilde{u}_e^{(3)}|(\theta,r)\leq C\eta r,  \ \forall (\theta, r)\in \Omega,\nonumber\\[5pt]
 \|\partial^k_\theta\partial^j_r(\tilde{u}_e^{(3)},\tilde{v}_e^{(3)})\|_2\leq& C(k,j)\eta, \quad \forall j,k\geq 0;\\
 r^2\triangle \tilde{u}_e^{(3)} -\tilde{u}_e^{(3)}+2\partial_\theta &\tilde{v}_e^{(3)}=0,\quad \int_{0}^{2\pi}\tilde{v}^{(3)}_ed\theta=0.\nonumber
\end{align}
Putting
\begin{align*}
&u^{\varepsilon}(\theta,r)=u_e(r)+\sum_{i=1}^4\varepsilon^{i} \tilde{u}_e^{(i)}(\theta,r)+\cdots,\\[5pt]  &v^{\varepsilon}(\theta,r)=\sum_{i=1}^4\varepsilon^{i} \tilde{v}_e^{(i)}(\theta,r)+\cdots, \\[5pt]
&p^{\varepsilon}(\theta,r)=p_e(r)+\sum_{i=1}^4\varepsilon^{i} \tilde{p}_e^{(i)}(\theta,r)+\cdots
\end{align*}
into the Navier-Stokes equations (\ref{NS-curvilnear}), we find that $(\tilde{u}_e^{(4)},\tilde{v}_e^{(4)},\tilde{p}_e^{(4)})$ satisfies the following linearized Euler equations in $\Omega$
\begin{eqnarray}
\left \{
\begin {array}{ll}
ar \partial_\theta \tilde{u}_e^{(4)}+2ar\tilde{v}_e^{(4)}+\partial_\theta \tilde{p}_e^{(4)}+f_{4e}(\theta,r)=0,\\[5pt]
ar \partial_\theta \tilde{v}_e^{(4)}-2ar\tilde{u}_e^{(4)}+r\partial_rp_e^{(4)}+g_{4e}(\theta,r)=0,\\[7pt]
\partial_\theta \tilde{u}_e^{(4)}+r\partial_r\tilde{v}_e^{(4)}+ \tilde{v}_e^{(4)}=0,\label{outer-4 order equation}
\end{array}
\right.
\end{eqnarray}
and equipped with the  boundary conditions
\begin{align}
r\tilde{v}_e^{(4)}|_{r=1}=-v_p^{(4)}|_{Y=0}, \quad \tilde{v}_e^{(4)}(\theta,r)=\tilde{v}_e^{(4)}(\theta+2\pi,r),\nonumber
\end{align}
where
\begin{align*}
f_{4e}(\theta,r)=&\tilde{u}_e^{(1)}\partial_\theta \tilde{u}_e^{(3)}+\tilde{u}_e^{(3)}\partial_\theta \tilde{u}_e^{(1)}+\tilde{v}_e^{(1)}r\partial_r\tilde{u}_e^{(3)}+\tilde{v}_e^{(3)}r\partial_r\tilde{u}_e^{(1)}
+\tilde{u}_e^{(1)}\tilde{v}_e^{(3)}+\tilde{u}_e^{(3)}\tilde{v}_e^{(1)}\\[5pt]
&+\tilde{u}_e^{(2)}\partial_\theta \tilde{u}_e^{(2)}+\tilde{v}_e^{(2)}r\partial_r \tilde{u}_e^{(2)}+\tilde{u}_e^{(2)}\tilde{v}_e^{(2)}\\
&-\underbrace{\Big(\frac{\partial_{\theta\theta}\tilde{u}_e^{(2)}}{r}+r\partial_{rr}\tilde{u}_e^{(2)}+\partial_r\tilde{u}_e^{(2)}
+\frac{2}{r}\partial_\theta \tilde{v}_e^{(2)}-\frac{\tilde{u}_e^{(2)}}{r}\Big)}_{=0},\\[5pt]
g_{4e}(\theta,r)=&\tilde{u}_e^{(1)}\partial_\theta \tilde{v}_e^{(3)}+\tilde{u}_e^{(3)}\partial_\theta \tilde{v}_e^{(1)}+\tilde{v}_e^{(1)}r\partial_r \tilde{v}_e^{(3)}+\tilde{v}_e^{(3)}r\partial_r \tilde{v}_e^{(1)}-2\tilde{u}_e^{(1)}\tilde{u}_e^{(3)}\\[5pt]
&+\tilde{u}_e^{(2)}\partial_\theta \tilde{v}_e^{(2)}+\tilde{v}_e^{(2)}r\partial_r \tilde{v}_e^{(2)}-(\tilde{u}_e^{(2)})^2\\
&-\underbrace{\Big(\frac{\partial_{\theta\theta}\tilde{v}_e^{(2)}}{r}+r\partial_{rr}\tilde{v}_e^{(2)}+\partial_r\tilde{v}_e^{(2)}
-\frac{2}{r}\partial_\theta \tilde{u}_e^{(2)}-\frac{\tilde{v}_e^{(2)}}{r}\Big)}_{=0}.
\end{align*}

\begin{Proposition}
The linearized Euler equations  (\ref{outer-4 order equation}) have a solution $(\tilde{u}_e^{(4)}, \tilde{v}_e^{(4)}, \tilde{p}_e^{(4)})$ which satisfies
\begin{align}\label{Estimate of fourth linearized Euler equation}
|\partial_\theta \tilde{u}_e^{(4)}+\tilde{v}_e^{(4)}|(\theta,r)\leq& C\eta r, \ |\partial_\theta \tilde{v}_e^{(4)}-\tilde{u}_e^{(4)}|(\theta,r)\leq C\eta r,  \ \forall (\theta, r)\in \Omega,\nonumber\\[5pt]
 \|\partial^k_\theta\partial^j_r(\tilde{u}_e^{(4)},\tilde{v}_e^{(4)})\|_2\leq& C(k,j)\eta, \quad \forall j,k\geq 0; \\
 r^2\triangle \tilde{u}_e^{(4)} -\tilde{u}_e^{(4)}+2\partial_\theta &\tilde{v}_e^{(4)}=0,\quad\int_{0}^{2\pi} \tilde{v}_e^{(4)}d\theta=0.\nonumber
\end{align}
\end{Proposition}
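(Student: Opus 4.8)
The plan is to follow verbatim the scheme of Propositions~\ref{solvability of first Euler}, \ref{solvability of second Euler equation} and \ref{solvability of third linearized Euler equation}: eliminate the pressure to reduce to a scalar equation for $r\tilde u_e^{(4)}$'s companion $r\tilde v_e^{(4)}$, solve it by Fourier expansion in $\theta$, then recover $\tilde u_e^{(4)}$ and $\tilde p_e^{(4)}$ from $\tilde v_e^{(4)}$, and finally read off all the required estimates from the decay of the Fourier coefficients.

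\textbf{Step 1: Pressure elimination.} Eliminating $\tilde p_e^{(4)}$ from the first two equations in (\ref{outer-4 order equation}) exactly as in the previous propositions, the linear part produces $-ar^2\triangle(r\tilde v_e^{(4)})$, while the forcing terms $f_{4e}$, $g_{4e}$ reorganize — after repeatedly using the divergence-free relations $\partial_\theta\tilde u_e^{(i)}+\partial_r(r\tilde v_e^{(i)})=0$, $i=1,2,3$ — into a finite sum of terms of the shape $\tilde v_e^{(j)}\big(r^2\triangle\tilde u_e^{(i)}-\tilde u_e^{(i)}+2\partial_\theta\tilde v_e^{(i)}\big)$ and $\tilde u_e^{(i)}\,r\triangle(r\tilde v_e^{(j)})$ with $i,j\in\{1,2,3\}$. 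By (\ref{Estimate of modified first linearized Euler equation}), (\ref{Estimate of modified second linearized Euler equation}) and (\ref{Estimate of modified third linearized Euler equation}) every such term vanishes, so the equation reduces to
\[
-ar^2\triangle(r\tilde v_e^{(4)})=0 \quad\text{in }\Omega,\qquad r\tilde v_e^{(4)}\big|_{r=1}=-v_p^{(4)}\big|_{Y=0}.
\]

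\textbf{Step 2: Solving the reduced equation and recovering $(\tilde u_e^{(4)},\tilde p_e^{(4)})$.} Since $\int_0^{2\pi}v_p^{(4)}(\theta,0)\,d\theta=0$ — which follows, as in the lower-order cases, from the divergence relation $\partial_\theta u_p^{(3)}+\partial_Y v_p^{(4)}+\partial_Y(Yv_p^{(3)})=0$ together with $\int_0^{2\pi}v_p^{(3)}\,d\theta=0$ and the matching condition at $Y\to-\infty$ — I would expand
\[
-v_p^{(4)}(\theta,0)=\sum_{n\ge1}\big[a_{n4}\cos(n\theta)+b_{n4}\sin(n\theta)\big],\qquad |a_{n4}|+|b_{n4}|\le C\eta\,n^{-k}\ \ \forall k\ge0,
\]
the rapid decay coming from the weighted estimate (\ref{decay behavior-prandtl-3}), and set
\[
\tilde v_e^{(4)}(\theta,r)=\sum_{n\ge1}r^{n-1}\big[a_{n4}\cos(n\theta)+b_{n4}\sin(n\theta)\big],\quad
\tilde u_e^{(4)}(\theta,r)=\sum_{n\ge1}r^{n-1}\big[-a_{n4}\sin(n\theta)+b_{n4}\cos(n\theta)\big].
\]
A direct computation (as in Proposition~\ref{solvability of first Euler}) gives $\partial_\theta\tilde u_e^{(4)}+\partial_r(r\tilde v_e^{(4)})=0$, $\ r^2\triangle\tilde u_e^{(4)}-\tilde u_e^{(4)}+2\partial_\theta\tilde v_e^{(4)}=0$, $\ \triangle(r\tilde v_e^{(4)})=0$ and $\int_0^{2\pi}\tilde v_e^{(4)}\,d\theta=0$, while the fast decay of the coefficients yields $\|\partial_\theta^k\partial_r^j(\tilde u_e^{(4)},\tilde v_e^{(4)})\|_2\le C(k,j)\eta$ for all $j,k\ge0$. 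For the two pointwise bounds in (\ref{Estimate of fourth linearized Euler equation}) I would use
\[
\partial_\theta\tilde u_e^{(4)}+\tilde v_e^{(4)}=\sum_{n\ge2}(1-n)r^{n-1}\big[a_{n4}\cos(n\theta)+b_{n4}\sin(n\theta)\big],
\]
and likewise $\partial_\theta\tilde v_e^{(4)}-\tilde u_e^{(4)}=\sum_{n\ge2}(1-n)r^{n-1}\big[a_{n4}\sin(n\theta)-b_{n4}\cos(n\theta)\big]$: the $n=1$ mode drops out, $r^{n-1}\le r$ for $n\ge2$, and $\sum_{n\ge2}(n-1)(|a_{n4}|+|b_{n4}|)\le C\eta$, which gives the $C\eta r$ bound. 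Finally $\tilde p_e^{(4)}$ is recovered by integrating the first equation of (\ref{outer-4 order equation}) in $\theta$ and choosing the radial part of the integration ``constant'' so that the second equation is also satisfied, exactly as at the end of the proof of Proposition~\ref{solvability of first Euler}.

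\textbf{Expected main obstacle.} The only nontrivial point is the algebraic bookkeeping in Step~1: one must verify that \emph{all} the quadratic contributions in $f_{4e}$ and $g_{4e}$ — which now involve the cross products $\tilde u_e^{(1)}\tilde u_e^{(3)}$, $\tilde u_e^{(2)}\tilde u_e^{(2)}$, their derivatives, and (through the diffusion corrections) the second-order quantities — assemble, after pressure elimination and use of the divergence-free relations, precisely into combinations annihilated by the identities $r^2\triangle\tilde u_e^{(i)}-\tilde u_e^{(i)}+2\partial_\theta\tilde v_e^{(i)}=0$ and $\triangle(r\tilde v_e^{(i)})=0$ for $i=1,2,3$. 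Once this cancellation is confirmed, the remaining analysis is word-for-word that of Proposition~\ref{solvability of second Euler equation}, so the routine estimates need not be reproduced.
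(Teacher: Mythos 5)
Your proposal is correct and follows exactly the route the paper takes: the paper's own proof is simply "same as Proposition \ref{solvability of third linearized Euler equation}," and what you have written out — pressure elimination, cancellation of the forcing via the identities $r^2\triangle\tilde u_e^{(i)}-\tilde u_e^{(i)}+2\partial_\theta\tilde v_e^{(i)}=0$ and $\triangle(r\tilde v_e^{(i)})=0$, reduction to $-ar^2\triangle(r\tilde v_e^{(4)})=0$, and the Fourier-series construction of Proposition \ref{solvability of first Euler} — is precisely that omitted argument.
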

\begin{proof}
The proof is same as Proposition \ref{solvability of third linearized Euler equation}, we omit the details.
\end{proof}

\subsubsection{Linearize Prandtl equations for $(u_p^{(4)},v_p^{(5)})$ and their solvabilities}

\indent

Let
\begin{align*}
&u^\varepsilon(\theta,r)=u_e(r)+u_p^{(0)}(\theta,Y)+\sum_{i=1}^3\varepsilon^i\big[\tilde{u}_e^{(i)}(\theta,r)+\tilde{u}_p^{(i)}(\theta,Y)\big]
+\varepsilon^4[\tilde{u}_e^{(4)}(\theta,r)+ u_p^{(4)}(\theta,Y)\big]+\cdots,\\[5pt]
&v^\varepsilon(\theta,r)=\sum_{i=1}^4\varepsilon^i\big[\tilde{v}_e^{(i)}(\theta,r)+v_p^{(i)}(\theta,Y)\big]
+\varepsilon^5v_p^{(5)}(\theta,Y)+\cdots,\\[5pt]
&p^\varepsilon(\theta,r)=p_e(r)+\sum_{i=1}^4\varepsilon^i\big[\tilde{p}_e^{(i)}(\theta,r)+p_p^{(i)}(\theta,Y)\big]
+\varepsilon^5p_p^{(5)}(\theta,Y)+\cdots
\end{align*}
with the boundary conditions
\begin{align*}
 \tilde{u}_e^{(4)}(\theta,1)+u_p^{(4)}(\theta,0)=0, \ \lim_{Y\rightarrow \infty}\partial_Yu_p^{(4)}(\theta,Y)=v_p^{(5)}(\theta,0)=0, \ p_p^{(5)}(\theta, 0)=0.
\end{align*}
As the derivation for equations of $(u_p^{(2)},v_p^{(3)})$, we obtain the following linearized Prandtl problem for $(u_p^{(4)},v_p^{(5)})$
\begin{eqnarray}
\left \{
\begin {array}{ll}
\big(u_e(1)+u_p^{(0)}\big)\partial_\theta u_p^{(4)}+\big(v_e^{(1)}(\theta,1)+ v_p^{(1)}\big)\partial_Yu_p^{(4)}+u_p^{(4)}\partial_\theta u_p^{(0)} \\[5pt]
\quad \quad \quad \quad v_p^{(5)}\partial_{Y}u_p^{(0)} -\partial_{YY}u_p^{(4)}=f_4(\theta,Y)\\[5pt]
\partial_\theta u_p^{(4)}+\partial_Yv_p^{(5)}+\partial_Y(Yv_p^{(4)})=0,\\[5pt]
u_p^{(4)}(\theta,Y)=u_p^{(4)}(\theta+2\pi,Y),\quad v_p^{(5)}(\theta,Y)=v_p^{(5)}(\theta+2\pi,Y),\\[5pt]
u_p^{(4)}\big|_{Y=0}=-\tilde{u}_e^{(4)}\big|_{r=1},\ \ \lim_{Y\rightarrow \infty}\partial_Yu_p^{(4)}(\theta,Y)=v_p^{(5)}(\theta,0)=0
\label{fourth linearized prandtl problem near 1}
\end{array}
\right.
\end{eqnarray}
and the pressure $p_p^{(5)}$ satisfies
\begin{align}\label{equation of fifth pressure}
\partial_Yp_p^{(5)}(\theta, Y)=g_4(\theta,Y), \quad p_p^{(5)}(\theta,0)=0,
\end{align}
where
{\small \begin{align*}
f_4(\theta,Y)=&-\partial_\theta p_p^{(4)}+Y\partial_{YY}u_p^{(3)}+\partial_Yu_p^{(3)}+\sum_{k=0}^2(-1)^k\frac{Y^k\partial_{\theta\theta}\tilde{u}_p^{(2-k)}}{k!}+2\partial_\theta v_p^{(2)}-2Y\partial_\theta v_p^{(1)}\\[5pt]
&-\sum_{k=0}^2(-1)^k\frac{Y^k\tilde{u}_p^{(2-k)}}{k!}-\sum_{i+j=4, 1\leq i\leq 3}\tilde{u}_p^{(i)}\partial_\theta\tilde{u}_p^{(j)}-\sum_{i+j=4 }[v_p^{(i)}Y\partial_Y\tilde{u}_p^{(j)}+\tilde{u}_p^{(i)}v_p^{(j)}]\\[5pt]
&-\sum_{k=0}^4\sum_{i+j=4-k, (k,j)\neq (0,4)}\Big(\frac{\partial_r^k\tilde{u}_e^{(i)}(\theta,1)}{k!}Y^k\partial_\theta\tilde{u}_p^{(j)}
+\tilde{u}_p^{(i)}\frac{\partial_r^k\partial_\theta\tilde{u}_e^{(j)}(\theta,1)}{k!}Y^k\Big)\\[5pt]
&-\sum_{k=0}^4\sum_{i+j=4-k}\Big(\frac{\partial_r^k\tilde{v}_e^{(i)}(\theta,1)}{k!}Y^{k+1}\partial_Y\tilde{u}_p^{(j)}
+v_p^{(i)}\frac{\partial_r^k(r\partial_r\tilde{u}_e^{(j)})(\theta,1)}{k!}Y^k\Big)\\[5pt]
&-\sum_{k=0}^4\sum_{i+j=5-k, (k,j)\neq (0,4),i\leq 4}\frac{\partial_r^k\tilde{v}_e^{(i)}(\theta,1)}{k!}Y^k\partial_Y\tilde{u}_p^{(j)}\\[5pt]
&-\sum_{k=0}^4\sum_{i+j=4-k}\Big(\frac{\partial_r^k\tilde{u}_e^{(i)}(\theta,1)}{k!}Y^{k}v_p^{(j)}
+\tilde{u}_p^{(i)}\frac{\partial_r^k\tilde{v}_e^{(j)}(\theta,1)}{k!}Y^k\Big)
\end{align*}
and
\begin{align*}
g_4(\theta,Y)=&\partial_{YY}v_p^{(4)}+Y\partial_{YY}v_p^{(3)}+\partial_{Y}v_p^{(3)}
+[\partial_{\theta\theta}v_p^{(2)}-Y\partial_{\theta\theta}v_p^{(1)}]-\sum_{k=0}^2(-1)^k\frac{Y^k}{k!}\partial_\theta \tilde{u}_p^{(2-k)}-[v_p^{(2)}-Yv_p^{(1)}]\\[5pt]
&-Y\partial_Yp_p^{(4)}-\sum_{i+j=4}[\tilde{u}_p^{(i)}\partial_\theta v_p^{(j)}+v_p^{(i)}Y\partial_Y v_p^{(j)}-\tilde{u}_p^{(i)}\tilde{u}_p^{(j)}]-\sum_{i+j=5} v_p^{(i)}Y\partial_Y v_p^{(j)}\\[5pt]
&-\sum_{k=0}^4\sum_{i+j=4-k}\Big(\frac{\partial_r^k\tilde{u}_e^{(i)}(\theta,1)}{k!}Y^{k}\partial_\theta v_p^{(j)}
+\tilde{u}_p^{(i)}\frac{\partial_r^k\partial_\theta \tilde{v}_e^{(j)}(\theta,1)}{k!}Y^k\Big)\\[5pt]
&+\sum_{k=0}^4\sum_{i+j=4-k}\Big(\frac{\partial_r^k\tilde{u}_e^{(i)}(\theta,1)}{k!}Y^{k}\tilde{u}_p^{(j)}
+\tilde{u}_p^{(i)}\frac{\partial_r^k\tilde{u}_e^{(j)}(\theta,1)}{k!}Y^k\Big)-\sum_{k=0}^3\sum_{i+j=5-k}\frac{\partial_r^k\tilde{v}_e^{(i)}(\theta,1)}{k!}Y^{k}\partial_Y v_p^{(j)}\\[5pt]
&-\sum_{k=0}^2\sum_{i+j=4-k}\Big(\frac{\partial_r^k\tilde{v}_e^{(i)}(\theta,1)}{k!}Y^{k+1}\partial_Y v_p^{(j)}
+v_p^{(i)}\frac{\partial_r^k(r\partial_r \tilde{v}_e^{(j)})(\theta,1)}{k!}Y^k\Big)
\end{align*}}
here $\tilde{u}_p^{(0)}=u_p^{(0)},\ \tilde{u}_p^{(4)}=u_p^{(4)}, \ \tilde{u}^{(0)}_e=u_e(r).$

\begin{Proposition} There exists $\eta_0>0$ such that for any $\eta\in(0,\eta_0)$, equations (\ref{fourth linearized prandtl problem near 1}) have a unique solution $(u_p^{(4)},v_p^{(5)})$ which satisfies
\begin{align}\label{decay behavior-prandtl-4}
&\sum_{0<j+k\leq m}\int_{-\infty}^0\int_0^{2\pi}\big|\partial_\theta^j\partial_Y^k (u_p^{(4)},v_p^{(5)} )\big|^2\big<Y\big>^{2l}d\theta dY\leq C(m,l)\eta^2, \ \ m\geq 1, l\geq 0, \nonumber\\
&\int_0^{2\pi}v_p^{(5)}(\theta,Y)d\theta=0,\ \forall \ Y\leq 0,
\end{align}
and
\begin{align*}
\|(u_p^{(4)},v_p^{(5)})\|_\infty\leq C\eta.
\end{align*}
\end{Proposition}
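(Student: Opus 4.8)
The plan is to reproduce, essentially line by line, the argument in the proof of Proposition~\ref{decay estimates of linearized Prandtl}: the linearized operator in \eqref{fourth linearized prandtl problem near 1} has the same background flow $\bar u := u_e(1)+u_p^{(0)}$, $\bar v := v_e^{(1)}(\theta,1)+v_p^{(1)}$ as every linearized Prandtl layer, and the only new feature is that the forcing $f_4$ (and $g_4$, which is used only to build $p_p^{(5)}$ through \eqref{equation of fifth pressure}) is a longer sum of terms. So the proof will split into three stages: homogenization of the boundary data and reduction to a regularized elliptic problem; a priori energy and positivity estimates, then passage to the limit; and finally weighted decay estimates through the von Mises change of variables.

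First I would homogenize: fix a cutoff $\eta_\ast\in C_c^\infty((-\infty,0])$ with $\eta_\ast(0)=1$ and $\int_{-\infty}^0\eta_\ast=0$, set $u := u_p^{(4)}+\tilde u_e^{(4)}(\theta,1)\eta_\ast(Y)$ and $v := v_p^{(5)}-v_p^{(5)}(\theta,0)+Yv_p^{(4)}-\partial_\theta\tilde u_e^{(4)}(\theta,1)\int_0^Y\eta_\ast(z)\,dz$, so that \eqref{fourth linearized prandtl problem near 1} turns into a system of the form \eqref{new linearized Prandtl equation} with homogeneous data $u|_{Y=0}=v|_{Y=0}=0$, $\lim_{Y\to-\infty}\partial_Y u=0$, and a new $2\pi$-periodic forcing $\tilde f$. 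The crucial point to record here is that $\tilde f$ decays fast, with $\|\langle Y\rangle^l\partial_\theta^j\partial_Y^k\tilde f\|_2\le C(j,k,l)\eta$ for all $j,k,l$: each summand of $f_4$ is a product of a factor carrying a fast-decaying weight --- a Prandtl profile $\tilde u_p^{(i)}$, $v_p^{(i)}$, $\partial_\theta p_p^{(4)}$, $\partial_{YY}u_p^{(3)}$, or a power $Y^k$ times a Prandtl derivative --- whose decay and $O(\eta)$ size follow from \eqref{decay behavior-prandtl}, \eqref{decay behavior-prandtl-1}, \eqref{decay behavior-prandtl-2}, \eqref{decay behavior-prandtl-3} and the fast decay of the pressures $p_p^{(i)}$, multiplied by bounded Euler data $\partial_r^k\tilde u_e^{(i)}(\theta,1)$, $\partial_r^k\tilde v_e^{(i)}(\theta,1)$, each $O(\eta)$ by the Euler propositions; in particular every power $Y^k$ is attached to a fast-decaying factor, so nothing grows at $-\infty$.

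Next I would build the solution exactly as for \eqref{appro linear prandtl}: for $\delta>0$ replace $v$ by $\int_Y^0\partial_\theta u^\delta\,dz$ and add $-\delta\,\partial_{\theta\theta}u^\delta$, then prove the two a priori estimates. Testing against $u^\delta$, using $\bar u_\theta+\bar v_Y=0$, the Hardy inequalities, and the smallness of $\eta$ gives $\|\partial_Y u^\delta\|_2^2+\delta\|\partial_\theta u^\delta\|_2^2\le C\eta\|\partial_\theta u^\delta\|_2^2+C\|Y\tilde f\|_2^2$; testing against $\partial_\theta u^\delta$, using the strict positivity $\bar u\ge\alpha/2>0$ for small $\eta$ (so that $\int\bar u|\partial_\theta u^\delta|^2\gtrsim\alpha\|\partial_\theta u^\delta\|_2^2$) and the vanishing of the diffusion and $\delta$-terms gives $\alpha\|\partial_\theta u^\delta\|_2^2\le C\eta\|\partial_Y u^\delta\|_2^2+C\|\tilde f\|_2^2$. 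Combining, choosing $\eta$ small, and reading $\partial_{YY}u^\delta$ off the equation yields the uniform bound $\alpha\|\partial_\theta u^\delta\|_2^2+\|\partial_Y u^\delta\|_2^2+\|\partial_{YY}u^\delta\|_2^2+\delta\|\partial_\theta u^\delta\|_2^2\le C\|\langle Y\rangle\tilde f\|_2^2$, whence existence and uniqueness of $u^\delta\in\dot H^1_0$, and $\delta\to0$ produces $[u,v]$ solving the homogenized system with $\alpha\|\partial_\theta u\|_2^2+\|\partial_Y u\|_2^2+\|\partial_{YY}u\|_2^2\le C\eta^2$. Then I would pass to the von Mises variable $\psi=\int_0^Y\bar u\,dz$ (so $\alpha/2\le|\psi|/|Y|\le\alpha$), rewrite the equation in the uniformly parabolic form $\partial_\theta\tilde u-\partial_\psi(a\partial_\psi\tilde u)+b\tilde u+c\tilde v=F$ with $a=\bar u\simeq\alpha$ and $b,c,F$ fast-decaying and $O(\eta)$, split $\tilde u=u_0(\psi)+\tilde u_{\ne}$, and run the induction on the weight exponent $l$ and then on the derivative order $m$ exactly as in \eqref{weight estimate in new variable}--\eqref{second estimate on norm}; undoing the homogenization and transporting back to $Y$ then gives $\sum_{0<j+k\le m}\|\langle Y\rangle^l\partial_\theta^j\partial_Y^k(u_p^{(4)},v_p^{(5)})\|_2\le C(m,l)\eta$. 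The constraint $\int_0^{2\pi}v_p^{(5)}(\theta,Y)\,d\theta=0$ follows, as for $v_p^{(2)}$, by integrating the divergence relation in $\theta$ and using the matching at $-\infty$; finally, since $\partial_Yu_p^{(4)}\to0$ the profile $u_p^{(4)}$ tends to a constant $A_{4\infty}$ which is $O(\eta)$ (controlled through the boundary value $u_p^{(4)}|_{Y=0}=-\tilde u_e^{(4)}|_{r=1}$ and the $L^1_Y$ norm of $\partial_Yu_p^{(4)}$), while $u_p^{(4)}-A_{4\infty}$ lies in a weighted $H^2$ space, so Sobolev embedding gives $\|u_p^{(4)}\|_\infty\le C\eta$, and likewise $\|v_p^{(5)}\|_\infty\le C\eta$. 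Since this is the last correction order, there is no need to subtract $A_{4\infty}$ and re-correct the Euler flow, which is why the weighted sum runs only over $j+k\ge1$.

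The only place that needs genuine care is the bookkeeping in the first stage: one must verify that every term in $f_4$ and $g_4$ really carries a fast-decaying, $O(\eta)$ factor --- in particular that the polynomial weights $Y^k$ never appear without such a companion and that all the coefficients $\partial_r^k\tilde u_e^{(i)}(\theta,1)$, $\partial_r^k\tilde v_e^{(i)}(\theta,1)$ are $O(\eta)$ --- so that $\tilde f$ lands in the weighted spaces used above with norm $O(\eta)$. Once this is checked, the remaining two stages are literally the argument already carried out in the proof of Proposition~\ref{decay estimates of linearized Prandtl}, which is why only the differences need be indicated and the repeated computations may be omitted.
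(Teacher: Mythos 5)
Your proposal is correct and follows exactly the route the paper takes: the paper's own proof of this proposition is simply the remark that it is the same as the proof of Proposition \ref{decay estimates of linearized Prandtl} (homogenization, $\delta$-regularized elliptic approximation with energy and positivity estimates, then weighted decay via the von Mises variable), and your write-up reproduces that argument while correctly noting the only genuine differences — the bookkeeping that every term of $f_4$, $g_4$ carries a fast-decaying $O(\eta)$ factor, and the fact that at this last order one keeps $u_p^{(4)}$ itself (hence the sum over $j+k\ge 1$ plus a separate $L^\infty$ bound) rather than subtracting a limit constant and re-correcting the Euler flow.
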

The proof is same with Proposition \ref{decay estimates of linearized Prandtl}, we omit the details.  Moreover, we can construct $p_p^{(5)}$ by solving the equation (\ref{equation of fifth pressure}).

\subsection{Approximate solutions}

\indent

In this subsection, we construct an approximate solution of Navier-Stokes equations (\ref{NS-curvilnear}).
Set
\begin{align*}
\tilde{u}_p^a(\theta,r):=&\chi(r)\Big(u_p^{(0)}(\theta,Y)+\sum_{i=1}^3\varepsilon^i\tilde{u}_p^{(i)}(\theta,Y)+
\varepsilon^4u_p^{(4)}(\theta,Y)\Big)
:=\chi(r)u_p^a,\\[5pt]
\tilde{v}_p^a(\theta,r):=&\chi(r)\Big(\sum_{i=1}^5\varepsilon^i v_p^{(i)}(\theta,Y)\Big)
:=\chi(r)v_p^a,\\
\tilde{p}_p^a(\theta,r):=&\chi^2(r)\Big(\sum_{i=1}^5\varepsilon^i p_p^{(i)}(\theta,Y)\Big)
:=\chi^2(r)p_p^a,
\end{align*}
and
\begin{align*}
u_e^a(\theta,r):=&u_e(r)+\sum_{i=1}^4\varepsilon^i \tilde{u}_e^{(i)}(\theta,r), \\[5pt]
 v_e^a(\theta,r):=&\sum_{i=1}^4\varepsilon^i \tilde{v}_e^{(i)}(\theta,r),\\[5pt]
 p_e^a(\theta,r):=&p_e(r)+\sum_{i=1}^4\varepsilon^i \tilde{p}_e^{(i)}(\theta,r).
\end{align*}
We construct an approximate solution
\begin{align}\label{approximate solution}
u^a(\theta,r):&=u_e^a(\theta,r)+\tilde{u}_p^a(\theta,r)+\varepsilon^5h(\theta,r),\nonumber\\[5pt]
v^a(\theta,r):&=v_e^a(\theta,r)+\tilde{v}_p^a(\theta,r),\nonumber\\[5pt]
p^a(\theta,r):&=p_e^a(\theta,r)+\tilde{p}_p^a(\theta,r),
\end{align}
where the corrector $h(\theta,r)$ will be given in Appendix B which satisfies
\beno
h(\theta, 1)=0, \ \|\partial_\theta^j\partial_r^kh\|_2\leq C(j,k)\varepsilon^{-k}
\eeno
and makes $(u^a, v^a)$ be divergence-free
\beno
u^a_\theta+rv^a_r+v^a=0.
\eeno
Moreover, $(u^a, v^a)$  satisfies the following boundary condition
\begin{align*}
 u^a(\theta+2\pi,r)&=u^a(\theta,r), \ v^a(\theta+2\pi,r)=v^a(\theta,r),\\[5pt]
u^a(\theta, 1)&=\alpha+\eta f(\theta), \ v^a(\theta,1)=0.
\end{align*}
And $v^a$ satisfies
\begin{align}\label{v^a null 0}
\int_{0}^{2\pi}v^ad\theta=0.
\end{align}

Notice that $\chi(r)=0$ and $A_i(r)=a_ir$ for $r\leq \frac12$, by collecting the estimate (\ref{Estimate of modified first linearized Euler equation}), (\ref{Estimate of modified second linearized Euler equation}), (\ref{Estimate of modified third linearized Euler equation}), (\ref{Estimate of fourth linearized Euler equation}) and (\ref{decay behavior-prandtl}), (\ref{decay behavior-prandtl-1}), (\ref{decay behavior-prandtl-2}), (\ref{decay behavior-prandtl-3}), (\ref{decay behavior-prandtl-4}),  we deduce that
\begin{align}\label{estimate on Euler parts}
\begin{aligned}
&\|u^a_e-ar\|_0+\|(\partial_r(u^a_e-ar),\partial_\theta u^a_e)\|_0\leq C\varepsilon\eta, \ \ \|v^a_e\|_0+\|(\partial_rv^a_e, \partial_\theta v^a_e)\|_0\leq C\varepsilon\eta, \\[5pt]
&|\partial_\theta u_e^a(\theta,r)+v_e^a(\theta,r)|\leq C\varepsilon \eta r, \ \ |\partial_\theta v_e^a(\theta,r)-u_e^a(\theta,r)+ar|\leq C\varepsilon \eta r, \ \forall (\theta, r)\in \Omega
\end{aligned}
\end{align}
and
\begin{align}\label{estimate on Prandtl parts}
\|(Y^j\partial_Y^ku_p^a, Y^j\partial_\theta^ku_p^a)\|_0\leq C \eta, \ \ \|(Y^j\partial_Y^kv_p^a,Y^j\partial_\theta^kv_p^a)\|_0\leq C \varepsilon\eta,  \ \ \forall j\leq 2, k\leq 1.
\end{align}

 Finally, set
 \begin{align*}
 R_u^a:&=u^au^a_\theta+v^aru^a_r+u^av^a+p^a_\theta-\varepsilon^2\Big( ru^a_{rr}
+\frac{u^a_{\theta\theta}}{r}+2\frac{v^a_{\theta}}{r}+u^a_r-\frac{u^a}{r}\Big),\\[5pt]
R_v^a:&=u^av^a_\theta+v^arv^a_r-(u^a)^2+rp^a_r-\varepsilon^2 \Big(rv^a_{rr}+\frac{v^a_{\theta\theta}}{r}-2\frac{u^a_{\theta}}{r}+v^a_r-\frac{v^a}{r}\Big),
 \end{align*}
 then there hold
 \begin{align}\label{remainder estimate}
\Big(\int_0^1\int_0^{2\pi}\frac{(R_u^a)^2(\theta,r)}{r}d\theta dr\Big)^{\frac12}\leq C\varepsilon^5, \ \Big(\int_0^1\int_0^{2\pi}\frac{(R_v^a)^2(\theta,r)}{r}d\theta dr\Big)^{\frac12}\leq C\varepsilon^5
 \end{align}
 and
{\small\begin{align*}
\left\{
\begin{array}{lll}
u^au^a_\theta+v^aru^a_r+u^av^a+p^a_{\theta}-\varepsilon^2\big( ru^a_{rr}
+\frac{u^a_{\theta\theta}}{r}+2\frac{v^a_{\theta}}{r}+u^a_r-\frac{u^a}{r}\big)=R_u^a,\ (\theta,r)\in \Omega\\[5pt]
u^av^a_\theta+v^arv^a_r-(u^a)^2+rp^a_r-\varepsilon^2\big( rv^a_{rr}+\frac{v^a_{\theta\theta}}{r}-2\frac{u^a_{\theta}}{r}+v^a_r-\frac{v^a}{r}\big)=R_v^a,\ (\theta,r)\in \Omega\\[5pt]
 u^a_\theta+(rv^a)_r=0, \ (\theta,r)\in \Omega \\[5pt]
 u^a(\theta+2\pi,r)=u^a(\theta,r), \ v^a(\theta+2\pi,r)=v^a(\theta,r), \ (\theta,r)\in \Omega\\[5pt]
u^a(\theta,1)=\alpha+f(\theta)\eta,\ v^a(\theta,1)=0, \ \theta\in [0,2\pi].
\end{array}
\right.
\end{align*}}

In fact, when $r\leq \frac12$, there hold
\begin{align*}
R_u^a=&u_e^a\partial_\theta u_e^a+v_e^ar\partial_ru_e^a+u^a_ev^a_e+\partial_\theta p_e^a-\varepsilon^2\Big( r\partial_{rr}u^a_e
+\frac{\partial_{\theta\theta}u^a_e}{r}+2\frac{\partial_\theta v^a_e}{r}+\partial_ru^a_e-\frac{u_e^a}{r}\Big)\\
=&\sum_{m=5}^8\varepsilon^m\Big( \sum_{i+j=m, i,j\leq 4} \tilde{u}_e^{(i)}[\partial_\theta \tilde{u}_e^{(j)}+\tilde{v}_e^{(j)}]+\sum_{i+j=m, i,j\leq 4} \tilde{v}_e^{(i)}r\partial_r \tilde{u}_e^{(j)}\Big),\\
R_v^a=&u_e^a\partial_\theta v_e^a+v_e^ar\partial_rv_e^a-(u^a_e)^2+r\partial_r p_e^a-\varepsilon^2\Big( r\partial_{rr}v^a_e
+\frac{\partial_{\theta\theta}v^a_e}{r}+2\frac{\partial_\theta u^a_e}{r}+\partial_rv^a_e-\frac{v_e^a}{r}\Big)\\
=&\sum_{m=5}^8\varepsilon^m\Big( \sum_{i+j=m, i,j\leq 4} \tilde{u}_e^{(i)}[\partial_\theta \tilde{v}_e^{(j)}-\tilde{u}_e^{(j)}]+\sum_{i+j=m, i,j\leq 4} \tilde{v}_e^{(i)}r\partial_r \tilde{v}_e^{(j)}\Big).
\end{align*}
Using (\ref{Estimate of modified first linearized Euler equation}), (\ref{Estimate of modified second linearized Euler equation}), (\ref{Estimate of modified third linearized Euler equation}) and (\ref{Estimate of fourth linearized Euler equation}), we deduce that
\begin{align*}
|R_u^a(\theta,r)|+|R_v^a(\theta,r)|\leq C\varepsilon^5 r, \  r<\frac12.
\end{align*}
Moreover, when $\frac12\leq r <1$, there holds $\|R_u^a\|_2+\|R_v^a\|_2\leq C\varepsilon^5$. Thus, we obtain (\ref{remainder estimate}).

\section{Linear stability estimates of error equations}

\indent
In this section, we derive the error equations and establish linear stability estimates.

\subsection{Error equations}
\indent

Set the error
\begin{align*}
u:=u^\varepsilon-u^a,\ v:=v^\varepsilon-v^a,\ p:=p^\varepsilon-p^a,
\end{align*}
then there hold
{\small \begin{align}
\left\{
\begin{array}{lll}
u^au_\theta+uu^a_\theta+uu_\theta+v^aru_r+vru^a_r+vru_r+v^au+vu^a+vu+p_\theta-\varepsilon^2\big( ru_{rr}
+\frac{u_{\theta\theta}}{r}+2\frac{v_{\theta}}{r}+u_r-\frac{u}{r}\big)=R_u^a,\\[5pt]
u^av_\theta+uv^a_\theta+uv_\theta+v^arv_r+vrv^a_r+vrv_r-(u^2+2u u^a)+rp_r-\varepsilon^2 \big( rv_{rr}+\frac{v_{\theta\theta}}{r}-2\frac{u_{\theta}}{r}+v_r-\frac{v}{r}\big)=R_v^a,\\[5pt]
u_\theta+(rv)_r=0,  \\[5pt]
u(\theta+2\pi,r)=u(\theta,r), \ v(\theta+2\pi,r)=v(\theta,r), \\[5pt]
u(\theta,1)=0,\ v(\theta,1)=0. \nonumber
\end{array}
\right.
\end{align}}
Let
\begin{align*}
S_u:&=u^au_\theta+v^aru_r+uu^a_\theta+vru^a_r+v^au+vu^a,\\[5pt]
S_v:&=u^av_\theta+v^arv_r+uv^a_\theta+vrv^a_r-2uu^a,
\end{align*}
then the error equations become
\begin{align}\label{e:error equation}
\left\{
\begin{array}{lll}
-\varepsilon^2\big(r u_{rr}
+\frac{u_{\theta\theta}}{r}+2\frac{v_{\theta}}{r}+u_r-\frac{u}{r}\big)+p_\theta+S_u=R_u,\\[5pt]
-\varepsilon^2\big( rv_{rr}+\frac{v_{\theta\theta}}{r}-2\frac{u_{\theta}}{r}+v_r-\frac{v}{r}\big)+rp_r+S_v=R_v,\\[5pt]
u_\theta+(rv)_r=0,  \\[5pt]
u(\theta,r)=u(\theta+2\pi,r),\  v(\theta,r)=v(\theta+2\pi,r), \\[5pt]
u(\theta,1)=0,\ v(\theta,1)=0,
 \end{array}
\right.
\end{align}
where
\begin{align*}
R_u:=R_u^a-uu_\theta-vru_r-vu,\quad R_v:=R_v^a-uv_\theta-vrv_r+u^2.
\end{align*}

\subsection{Linear stability estimate}
\indent

In this subsection, we consider the linear equations
\begin{align}\label{linear equation of error}
\left\{
\begin{array}{lll}
-\varepsilon^2\big(r u_{rr}
+\frac{u_{\theta\theta}}{r}+2\frac{v_{\theta}}{r}+u_r-\frac{u}{r}\big)+p_\theta+S_u=F_u,\\[5pt]
-\varepsilon^2\big( rv_{rr}+\frac{v_{\theta\theta}}{r}-2\frac{u_{\theta}}{r}+v_r-\frac{v}{r}\big)+rp_r+S_v=F_v,\\[5pt]
u_\theta+(rv)_r=0,  \\[5pt]
u(\theta,r)=u(\theta+2\pi,r),\  v(\theta,r)=v(\theta+2\pi,r), \\[5pt]
u(\theta,1)=0,\ v(\theta,1)=0,
 \end{array}
\right.
\end{align}
 and establish its stability estimate. Due to the divergence-free condition and the boundary condition of $v$, we deduce that
\begin{align*}
\int_0^{2\pi}v(\theta,r)d\theta=0,\ r\in (0,1].
\end{align*}
So the Poincar\'{e} inequality is valid:
\begin{align}\label{poin}
\int_0^{2\pi}v^2d\theta\leq\int_0^{2\pi}v_\theta^2d\theta.
\end{align}
We also give the following Hardy-type inequality which will be used frequently.
\begin{Lemma}
If $u(r)$ is a bounded function in $[0,1]$ and $u(1)=0$, then there hold
\begin{align}\label{Hardy type inequality}
&\int_0^1r^\alpha u^2dr\leq C(\alpha)\int_0^1 r^{\alpha+2}u_r^2dr,\ \forall \alpha >-1.\\\label{weight Hardy}
&\int_0^1\frac{ru^2}{(1-r)^2}dr\leq C\int_0^1ru_r^2dr.
\end{align}
\end{Lemma}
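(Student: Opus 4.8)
The plan is to prove both estimates as one-dimensional weighted Hardy inequalities, each by a single integration by parts that moves a derivative from $u^2$ onto $u$, discarding all boundary terms with the help of $u(1)=0$ (and the vanishing of a suitable weight at $r=0$), and then closing by Cauchy--Schwarz and absorption. For (\ref{Hardy type inequality}) I first note that $\int_0^1 r^\alpha u^2\,dr$ is finite, since $u$ is bounded and $\alpha>-1$; then, writing $r^\alpha=\tfrac1{\alpha+1}\bigl(r^{\alpha+1}\bigr)'$ and integrating by parts,
\[
\int_0^1 r^\alpha u^2\,dr=\frac1{\alpha+1}\Bigl(\bigl[r^{\alpha+1}u^2\bigr]_0^1-2\int_0^1 r^{\alpha+1}u u_r\,dr\Bigr)=-\frac{2}{\alpha+1}\int_0^1 r^{\alpha+1}u u_r\,dr,
\]
the boundary term vanishing at $r=1$ because $u(1)=0$ and at $r=0$ because $\alpha+1>0$. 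Splitting $r^{\alpha+1}=r^{\alpha/2}\,r^{(\alpha+2)/2}$ and applying Cauchy--Schwarz bounds the right side by $\tfrac2{\alpha+1}\bigl(\int_0^1 r^\alpha u^2\,dr\bigr)^{1/2}\bigl(\int_0^1 r^{\alpha+2}u_r^2\,dr\bigr)^{1/2}$; dividing and squaring gives (\ref{Hardy type inequality}) with $C(\alpha)=4/(\alpha+1)^2$ (if the right side is infinite there is nothing to prove).

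For (\ref{weight Hardy}) I may assume the right side $\int_0^1 r u_r^2\,dr$ is finite, so that $u(r)=-\int_r^1 u_r$ and hence $u(r)^2\le\bigl(\int_r^1 s^{-1}\,ds\bigr)\bigl(\int_r^1 s\,u_r^2\,ds\bigr)$ by Cauchy--Schwarz. Using $\frac1{(1-r)^2}=\bigl(\tfrac{r}{1-r}\bigr)'$, integrating by parts on $[0,1-\delta]$, and using that $\tfrac r{1-r}$ vanishes at $r=0$,
\[
\int_0^{1-\delta}\frac{r u^2}{(1-r)^2}\,dr=\frac{(1-\delta)\,u(1-\delta)^2}{\delta}-2\int_0^{1-\delta}\frac{r u u_r}{1-r}\,dr.
\]
The boundary contribution is harmless: since $u(1-\delta)^2\le\bigl(-\ln(1-\delta)\bigr)\int_{1-\delta}^1 s\,u_r^2\,ds\le 2\delta\int_{1-\delta}^1 s\,u_r^2\,ds$ for $\delta\le\tfrac12$, it is bounded by $2\int_0^1 s\,u_r^2\,ds$ and tends to $0$ as $\delta\to0$. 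Writing $\frac{r u u_r}{1-r}=\bigl(\tfrac{\sqrt r\,u}{1-r}\bigr)\bigl(\sqrt r\,u_r\bigr)$ and applying Cauchy--Schwarz, then setting $X_\delta=\int_0^{1-\delta}\frac{r u^2}{(1-r)^2}\,dr$ (finite, the integrand being bounded on $[0,1-\delta]$) and $Y=\int_0^1 r u_r^2\,dr$, I obtain $X_\delta\le 2Y+2\sqrt{X_\delta}\,\sqrt Y$, whence $X_\delta\le C\,Y$ with $C$ independent of $\delta$; letting $\delta\to0$ by monotone convergence gives (\ref{weight Hardy}).

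The computations above are routine; the one point that genuinely needs care is the limiting argument for (\ref{weight Hardy}). One cannot take for granted that the singular integral $\int_0^1 r u^2/(1-r)^2\,dr$ is finite even when the right side is, so the integration by parts must be carried out on the truncated interval $[0,1-\delta]$, the boundary term at $r=1-\delta$ shown to vanish in the limit (using the finiteness of $\int_0^1 s\,u_r^2\,ds$ and the elementary bound $-\ln(1-\delta)\le2\delta$), the resulting quadratic inequality derived with a $\delta$-independent constant, and only then $\delta\to0$. It is precisely the weight $r$ on the right side of (\ref{weight Hardy}) — rather than weight $1$ — that forces this more careful route instead of a bare substitution $t=1-r$ reducing directly to the classical Hardy inequality on $(0,1)$.
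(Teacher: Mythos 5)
Your proof is correct and follows essentially the same route as the paper's (integration by parts to move the derivative off $u^2$, then Cauchy--Schwarz and absorption); your extra care with the truncation at $r=1-\delta$ for (\ref{weight Hardy}) only makes rigorous what the paper performs directly on $[0,1]$. One tiny slip: your integration-by-parts identity for (\ref{weight Hardy}) omits the term $-\int_0^{1-\delta}\frac{u^2}{1-r}\,dr$ coming from $d(ru^2)=u^2\,dr+2ruu_r\,dr$, so the display should be the inequality ``$\le$'' rather than an equality, but since the omitted term is nonpositive the rest of the argument is unaffected.
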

\begin{proof}
By integrating by parts, we deduce that
\begin{align*}
\int_0^1r^\alpha u^2dr=-\frac{2}{\alpha+1}\int_0^1 r^{\alpha+2}u u_rdr\leq C(\alpha)\Big(\int_0^1r^\alpha u^2 dr\Big)^{\frac12}\Big(\int_0^1r^{\alpha+2}u_r^2dr\Big)^{\frac12}.
\end{align*}
Thus, there hold
\begin{align*}
\int_0^1r^\alpha u^2dr\leq C(\alpha)\int_0^1 r^{\alpha+2}u_r^2dr,\ \forall \alpha >-1.
\end{align*}
By integrating by parts,
\begin{align*}
\int_0^1\frac{ru^2}{(1-r)^2}dr=\int_0^1ru^2d\frac{1}{1-r}=-\int_0^1\frac{1}{1-r}d(ru^2)=-\int_0^1\Big(\frac{u^2}{1-r}+\frac{ruu_r}{1-r}\Big)dr.
\end{align*}
So we deduce that
\begin{align*}
\int_0^1\frac{ru^2}{(1-r)^2}dr+\int_0^1\frac{u^2}{1-r}dr=-\int_0^1\frac{2ruu_r}{1-r}dr\leq C\Big(\int_0^1\frac{ru^2}{(1-r)^2} dr\Big)^{\frac12}\Big(\int_0^1ru_r^2dr\Big)^{\frac12}.
\end{align*}

Thus, there holds
\begin{align*}
&\int_0^1\frac{ru^2}{(1-r)^2}dr\leq C\int_0^1ru_r^2dr.
\end{align*}
\end{proof}
Since $\chi(r)$ is a smooth cut-off function satisfying $\chi|_{[0,\frac{1}{2}]}=0$, by inequality (\ref{weight Hardy}),
\begin{align}\label{cut Hardy}
\int_0^1\Big(\frac{\chi u^2}{(1-r)^2}+\frac{|\chi'|u^2}{(1-r)^2}\Big)dr\leq C\int_0^1\frac{ru^2}{(1-r)^2}dr\leq C\int_0^1ru_r^2dr.
\end{align}

\subsubsection{Basic energy estimate} 
\indent

In this subsection, we establish the following basic energy estimate of (\ref{linear equation of error}).

 \begin{lemma}\label{basic Energy estimate}
 Let $(u,v)$ be a bounded solution of (\ref{linear equation of error}), then there holds
\begin{align}\label{e:basic Energy estimate}
&\varepsilon^2\int_{0}^{1}\int_0^{2\pi}\frac{ (u_\theta+v)^2+(v_\theta-u)^2}{r}d\theta dr+\varepsilon^2\int_{0}^{1}\int_0^{2\pi}r\big(u_r^2+v_r^2\big)d\theta dr\nonumber\\
\leq& C\int_{0}^{1}\int_0^{2\pi}r(u^2_\theta+v^2_\theta)d\theta dr+C\int_{0}^{1}\int_0^{2\pi}\big[u F_u+vF_v \big]d\theta dr.
\end{align}
\end{lemma}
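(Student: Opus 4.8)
The plan is to test the linear system (\ref{linear equation of error}) against the multiplier $(u,v)$ itself, which is the natural energy multiplier, and track carefully the contributions of the pressure, the diffusion, and the transport terms $S_u, S_v$. First I would multiply the first equation of (\ref{linear equation of error}) by $u$ and the second by $v$, integrate over $\Omega=(0,2\pi)\times(0,1)$, and add. The pressure contribution is $\int p_\theta u + \int r p_r v$; integrating by parts in $\theta$ (periodicity) and in $r$ (with $v(\theta,1)=0$ killing the boundary term at $r=1$, and a check that $rv\to 0$ as $r\to 0$), this becomes $-\int p\,(u_\theta + (rv)_r)=0$ by the divergence-free condition, so the pressure drops out entirely.

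Next I would handle the diffusion terms. The combination $-\varepsilon^2\bigl(r u_{rr}+\tfrac{u_{\theta\theta}}{r}+2\tfrac{v_\theta}{r}+u_r-\tfrac{u}{r}\bigr)u - \varepsilon^2\bigl(r v_{rr}+\tfrac{v_{\theta\theta}}{r}-2\tfrac{u_\theta}{r}+v_r-\tfrac{v}{r}\bigr)v$ is exactly the polar-coordinate Stokes dissipation form; integrating by parts (again boundary terms vanish at $r=1$ since $u=v=0$ there, and I would verify integrability near $r=0$) produces a positive quadratic form which, after organizing, equals $\varepsilon^2\int \tfrac{(u_\theta+v)^2+(v_\theta-u)^2}{r} + \varepsilon^2\int r(u_r^2+v_r^2)$ up to terms absorbable into the right-hand side; this is the left-hand side of (\ref{e:basic Energy estimate}). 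The cleanest route here is to recognize that $\int |\nabla \mathbf{u}|^2$ in Cartesian coordinates equals the Dirichlet form and then re-express it in polar coordinates, using $\|\mathrm{div}\,\mathbf{u}\|=0$; the two shown combinations are $r\,|D\mathbf{u}|^2$-type quantities. I would present this as a lemma-internal computation, noting that the cross terms $2v_\theta u/r$ and $-2u_\theta v/r$ are precisely what complete the squares.

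The transport terms are where the right-hand side of (\ref{e:basic Energy estimate}) comes from. Writing $\int S_u\,u + \int S_v\,v$ and using the decomposition $u^a = ar + \chi(r)u_p^{(0)}+\cdots$, $v^a = v_e^a+\chi(r)v_p^a$ together with the divergence-free identity $u^a_\theta + (rv^a)_r=0$, the leading skew-symmetric part $\int(u^a u_\theta + u^a v_\theta)\cdot$ integrates to $-\tfrac12\int (u^a_\theta + \cdots)(u^2+v^2)$, which is $O(\eta)\int(u^2+v^2)$ by (\ref{estimate on Euler parts})--(\ref{estimate on Prandtl parts}); by Poincaré (\ref{poin}) and the Hardy inequalities (\ref{Hardy type inequality})--(\ref{cut Hardy}) this is controlled by $\int r(u_\theta^2+v_\theta^2)$ (the $u$-zero-mode being harmless here since it is multiplied by a small factor and we are not claiming a lower bound, only an upper estimate). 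The terms $\int(vru^a_r + v^a u + vu^a)u$ and $\int(uv^a_\theta + vrv^a_r - 2uu^a)v$: the Couette pieces $2arv\cdot u_\theta$-type and $-2ar u\cdot v$ combine or are bounded using Cauchy--Schwarz and Poincaré by $C\int r(u_\theta^2+v_\theta^2)$; the Prandtl pieces carry a factor $\chi(r)$ supported away from $r=0$ and an extra $\varepsilon$ or $Y$-weight, so (\ref{cut Hardy}) turns $\|\chi u/(1-r)\|$ into $\|\sqrt{r}\,u_r\|$, and the resulting $\varepsilon^2\eta\int r u_r^2$ can be absorbed into the $\varepsilon^2\int r u_r^2$ on the left once $\eta$ is small. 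Collecting everything gives (\ref{e:basic Energy estimate}).

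The main obstacle I anticipate is the bookkeeping for the Prandtl part of the transport terms: these involve $u^a_r$, which is $O(\varepsilon^{-1})$ because of the boundary-layer scaling $Y=(r-1)/\varepsilon$, so naively $\int vru^a_r u$ is $O(\varepsilon^{-1})$ and destroys the estimate. The resolution is exactly the Hardy-inequality mechanism already used elsewhere in the paper: one writes $\partial_r(\chi u_p^{(0)}) = \chi' u_p^{(0)} + \tfrac{1}{r-1}\chi Y\partial_Y u_p^{(0)}$, so the singular $\varepsilon^{-1}$ is converted into a $\tfrac{1}{1-r}$ factor acting on $u$ or $v$, and then (\ref{weight Hardy})/(\ref{cut Hardy}) trades that for $\sqrt{r}\,u_r$ or $\sqrt{r}\,v_r$, each occurrence of which costs a factor $\eta$ (from the size of $Y\partial_Y u_p^{(0)}$). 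So the dangerous term becomes $C\eta\,\varepsilon^2\int r(u_r^2+v_r^2)$, absorbable for small $\eta$. I would make sure to state this Hardy-trick step explicitly rather than hide it, since it is the one non-routine point, and I would double-check the behavior of all boundary terms at $r=0$ (where the polar coordinate is singular) — using that $u,v$ are bounded and $rv\to 0$ — to justify the integrations by parts.
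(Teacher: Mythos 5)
Your proposal follows essentially the same route as the paper's proof of Lemma \ref{basic Energy estimate}: test with the multiplier $(u,v)$, kill the pressure by the divergence-free condition, integrate the polar Stokes dissipation by parts to produce exactly the left-hand side, and control the convective terms $\int(uS_u+vS_v)$ via the Poincar\'{e} inequality (\ref{poin}) and the Hardy inequalities (\ref{Hardy type inequality})--(\ref{cut Hardy}), with the $O(\varepsilon^{-1})$ Prandtl derivative tamed by rewriting $\partial_r(\chi u_p^{a})$ in terms of $\tfrac{1}{r-1}\chi Y\partial_Y u_p^{a}$. The only cosmetic difference is that the paper absorbs the resulting $\tfrac{\varepsilon^2}{3}\int r(u_r^2+v_r^2)$ by Young's inequality without needing $\eta$ small at this stage, whereas you invoke smallness of $\eta$; both work.
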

\begin{proof}
Multiplying the first equation in (\ref{e:error equation}) by $u$, the second equation  in (\ref{e:error equation}) by $v$, adding them  together and integrating in $\Omega$, we obtain that
\begin{align*}
&\underbrace{-\varepsilon^2\int_{0}^{1}\int_0^{2\pi}\Big[\big(ru_{rr}
+\frac{u_{\theta\theta}}{r}+2\frac{v_{\theta}}{r}+u_r-\frac{u}{r}\big)u+ \big(rv_{rr}+\frac{v_{\theta\theta}}{r}-2\frac{u_{\theta}}{r}+v_r-\frac{v}{r}\big)v\Big]d\theta dr}_{diffusion \ term}\\
&+\underbrace{\int_{0}^{1}\int_0^{2\pi}(p_\theta u+p_r rv)d\theta dr}_{pressure \ term}+\underbrace{\int_{0}^{1}\int_0^{2\pi}\big(u S_u +vS_v \big)d\theta dr}_{convective \ term}\\
=&\int_{0}^{1}\int_0^{2\pi}\big[u F_u +vF_v \big]d\theta dr.
\end{align*}
We deal with them term by terms. \\
{\bf The diffusion term:} Integrating by parts, we obtain
\begin{align}\label{e:diffusion estimate in basic energy estimate}
&-\varepsilon^2\int_{0}^{1}\int_0^{2\pi}\Big[\big(ru_{rr}
+\frac{u_{\theta\theta}}{r}+2\frac{v_{\theta}}{r}+u_r-\frac{u}{r}\big)u+ \big(rv_{rr}+\frac{v_{\theta\theta}}{r}-2\frac{u_{\theta}}{r}+v_r-\frac{v}{r}\big)v\Big]d\theta dr\nonumber\\
=&\varepsilon^2\int_{0}^{1}\int_0^{2\pi}\bigg(\frac{ u_\theta^2+v_\theta^2}{r}+\frac{ u^2+v^2}{r}+2\frac{u_\theta v-uv_\theta}{r}\bigg)d\theta dr
+\varepsilon^2\int_{0}^{1}\int_0^{2\pi}r\big(u_r^2+v_r^2\big)d\theta dr\nonumber\\
= &\varepsilon^2\int_{0}^{1}\int_0^{2\pi}\frac{ (u_\theta+v)^2+(v_\theta-u)^2}{r}d\theta dr+\varepsilon^2\int_{0}^{1}\int_0^{2\pi}r\big(u_r^2+v_r^2\big)d\theta dr.
\end{align}
{\bf Pressure term:} Integrating by parts and using the divergence-free condition, we deduce that
\begin{align}\label{e:pressure estimate in basic energy estimate}
\int_{0}^{1}\int_0^{2\pi}\big(p_\theta u+p_r (rv)\big)d\theta dr=0.
\end{align}
{\bf Convective term:} Integrating by part and using the divergence-free condition, we obtain
\begin{align}
&\int_{0}^{1}\int_0^{2\pi}\big(uS_u +vS_v \big)d\theta dr\nonumber\\
=&\int_{0}^{1}\int_0^{2\pi}\big[u^au_\theta u+rv^au_ru+uu^a_\theta u+vu^a_rru+(v^au+vu^a)u\big] d\theta dr\nonumber\\
&+\int_{0}^{1}\int_0^{2\pi}\big[u^av_\theta v+rv^av_rv+uv^a_\theta v+vv^a_r rv-2uu^av\big] d\theta dr\nonumber\\
=&\int_{0}^{1}\int_0^{2\pi}\big[u^2u^a_\theta +v^2 r v^a_r+v^au^2+uvv_\theta^a +uv(ru^a_r-u^a)\big] d\theta dr,\nonumber
\end{align}
where we used
$$\int_{0}^{1}\int_0^{2\pi}(u^au_\theta u+rv^au_ru)d\theta dr=\int_{0}^{1}\int_0^{2\pi}(u^av_\theta v+rv^av_rv)d\theta dr=0.$$
Thus,
\begin{align*}
&\int_{0}^{1}\int_0^{2\pi}\big(uS_u +vS_v \big)d\theta dr\\
=&\int_{0}^{1}\int_0^{2\pi}\big[u^2(u^a_\theta+v^a)+uv(v^a_\theta-u^a+ar)+v^2 r v^a_r+uv(ru^a_r-ar)\big] d\theta dr\\
=&\underbrace{\int_{0}^{1}\int_0^{2\pi}\big(v^2 r v^a_r+uv(ru^a_r-ar)\big) d\theta dr}_{I_1}+
\underbrace{\int_{0}^{1}\int_0^{2\pi}u^2(u^a_\theta+v^a) d\theta dr}_{I_2}
+\underbrace{\int_{0}^{1}\int_0^{2\pi}uv(v^a_\theta-u^a+ar)d\theta dr}_{I_3}.
\end{align*}
{\bf 1)Estimate of $I_1$:} We divide it into Euler part and Prandtl part.
By (\ref{estimate on Euler parts}), the Poincar\'{e} inequality (\ref{poin}) and the Hardy-type inequality (\ref{Hardy type inequality}), we obtain
\begin{align*}
&\Big|\int_{0}^{1}\int_0^{2\pi} (v^2 r \partial_rv^a_e+uv(r\partial_ru^a_e-ar)) d\theta dr\Big|\\
\leq& C\varepsilon \eta\int_{0}^{1}\int_0^{2\pi}(rv^2+r|uv|)d\theta dr\\
\leq& C\varepsilon \eta\int_{0}^{1}\int_0^{2\pi}rv^2d\theta dr+C\varepsilon \eta\Big(\int_{0}^{1}\int_0^{2\pi}rv^2d\theta dr\Big)^{\frac12}\Big(\int_{0}^{1}\int_0^{2\pi}ru^2d\theta dr\Big)^{\frac12}\\
\leq& C\varepsilon \eta\int_{0}^{1}\int_0^{2\pi}rv_\theta^2d\theta dr+C\varepsilon \eta\Big(\int_{0}^{1}\int_0^{2\pi}rv_\theta^2d\theta dr\Big)^{\frac12}\Big(\int_{0}^{1}\int_0^{2\pi}ru_r^2d\theta dr\Big)^{\frac12}\\
\leq& C\int_{0}^{1}\int_0^{2\pi}rv_\theta^2d\theta dr+\frac{\varepsilon^2}{20}\int_{0}^{1}\int_0^{2\pi}ru_r^2d\theta dr.
\end{align*}
Moreover, notice that $\chi(r)=0$ for $r\leq \frac12$, using (\ref{estimate on Prandtl parts}), the Poincar\'{e} inequality (\ref{poin}), and the Hardy inequality (\ref{cut Hardy}), we have
\begin{align*}
&\Big|\int_{0}^{1}\int_0^{2\pi} uvr\partial_r(\chi u^a_p) d\theta dr\Big|\\
\leq&\Big|\int_{0}^{1}\int_0^{2\pi} uvr\chi'(r) u^a_p d\theta dr\Big|+\Big|\int_{0}^{1}\int_0^{2\pi} \frac{ru v}{r-1}\chi(r) Y\partial_Yu^a_pd\theta dr\Big|\\
\leq&\varepsilon\Big|\int_{0}^{1}\int_0^{2\pi} \frac{uvr}{r-1}\chi'(r) Yu^a_p d\theta dr\Big|+\varepsilon\Big|\int_{0}^{1}\int_0^{2\pi} \frac{ru v}{(r-1)^2}\chi(r) Y^2\partial_Yu^a_pd\theta dr\Big|\\
\leq&C\e\Big(\int_{0}^{1}\int_0^{2\pi} rv^2d\theta dr\Big)^\frac12\Big(\int_{0}^{1}\int_0^{2\pi} \frac{ru^2}{(r-1)^2}d\theta dr\Big)^\frac12\\
&+C\e\Big(\int_{0}^{1}\int_0^{2\pi} \frac{r(rv)^2}{(r-1)^2}d\theta dr\Big)^\frac12\Big(\int_{0}^{1}\int_0^{2\pi} \frac{ru^2}{(r-1)^2}d\theta dr\Big)^\frac12\\
\leq&C\e\Big(\int_{0}^{1}\int_0^{2\pi} rv_\theta^2d\theta dr\Big)^\frac12\Big(\int_{0}^{1}\int_0^{2\pi} ru_r^2d\theta dr\Big)^\frac12\\
&+C\e\Big(\int_{0}^{1}\int_0^{2\pi} r[(rv)_r]^2d\theta dr\Big)^\frac12\Big(\int_{0}^{1}\int_0^{2\pi} ru_r^2d\theta dr\Big)^\frac12\\
\leq&C\int_{0}^{1}\int_0^{2\pi}r(u^2_\theta+v_\theta^2)d\theta dr+\frac{\varepsilon^2}{20}\int_{0}^{1}\int_0^{2\pi}ru_r^2d\theta dr.
\end{align*}
The last inequality above used the divergence-free condition $u_\theta=-(rv)_r$. By (\ref{estimate on Prandtl parts}) and  the Poincar\'{e} inequality (\ref{poin}), we deduce that
\begin{align*}
&\Big|\int_{0}^{1}\int_0^{2\pi} v^2r\partial_r( \chi(r)v^a_p) d\theta dr\Big|\leq C\int_{0}^{1}\int_0^{2\pi}rv_\theta^2d\theta dr.
\end{align*}
Thus, we obtain
\begin{align*}
|I_1|\leq C\int_{0}^{1}\int_0^{2\pi}r(u^2_\theta+v_\theta^2)d\theta dr+\frac{\varepsilon^2}{10}\int_{0}^{1}\int_0^{2\pi}ru_r^2d\theta dr.
\end{align*}
{\bf 2)Estimate of $I_2$:}
We decompose $u(\theta,r)=u_0(r)+\tilde{u}(\theta,r)$,  where $u_0(r)=\frac{1}{2\pi}\int_0^{2\pi}u(\theta,r)d\theta$, and notice that
$\int_0^{2\pi}v^a(r,\theta)d\theta=0, \ \forall r\in (0,1]$,  we obtain
\begin{align*}
\int_{0}^{1}\int_0^{2\pi}u^2(u^a_\theta+v^a) d\theta dr =&\int_{0}^{1}\int_0^{2\pi}(u^a_\theta+v^a)(2u_0\tilde{u}+\tilde{u}^2)d\theta dr\\
=&\underbrace{\int_{0}^{1}\int_0^{2\pi}(\partial_\theta u^a_e+v^a_e)(2u_0\tilde{u}+\tilde{u}^2)d\theta dr}_{I_{21}}\\
&+\underbrace{\int_{0}^{1}\int_0^{2\pi}(\partial_\theta \tilde{u}^a_p+\tilde{v}^a_p)(2u_0\tilde{u}+\tilde{u}^2)d\theta dr}_{I_{22}}.
\end{align*}
Moreover, using (\ref{estimate on Euler parts}),  the Poincar\'{e} inequality and Hardy inequality (\ref{Hardy type inequality}), we deduce that
\begin{align*}
|I_{21}|\leq & C\varepsilon \eta \int_{0}^{1}\int_0^{2\pi}r|2u_0\tilde{u}+\tilde{u}^2|d\theta dr\\
 \leq & C\varepsilon \eta \int_{0}^{1}\int_0^{2\pi}ru^2_\theta d\theta dr+C\varepsilon \eta \Big(\int_{0}^{1}\int_0^{2\pi}r\tilde{u}^2 d\theta dr\Big)^{\frac12}\Big(\int_{0}^{1}\int_0^{2\pi}ru^2_0(r) d\theta dr\Big)^{\frac12}\\
 \leq & C\varepsilon \eta \int_{0}^{1}\int_0^{2\pi}ru^2_\theta d\theta dr+C\varepsilon \eta \Big(\int_{0}^{1}\int_0^{2\pi}ru^2_\theta d\theta dr\Big)^{\frac12}\Big(\int_{0}^{1}\int_0^{2\pi}r( u'_0(r))^2 d\theta dr\Big)^{\frac12}\\
 \leq&C\int_{0}^{1}\int_0^{2\pi}ru_\theta^2d\theta dr+\frac{\varepsilon^2}{20}\int_{0}^{1}\int_0^{2\pi}ru_r^2d\theta dr.
\end{align*}
Moreover, notice that $\chi(r)=0$ for $r\leq \frac12$, by (\ref{estimate on Prandtl parts}) and the Hardy inequality (\ref{cut Hardy}), we deduce that
\begin{align}\label{estimate of prandtl part}
|I_{22}|\leq & 2\varepsilon\Big|\int_{\frac12}^{1}\int_0^{2\pi}Y\partial_\theta \tilde{u}^a_p\frac{u_0\tilde{u}}{r-1}d\theta dr\Big|+2\varepsilon\Big|\int_{\frac12}^{1}\int_0^{2\pi} u_0\tilde{u}d\theta dr\Big|+C\int_{\frac12}^{1}\int_0^{2\pi}\chi(r)\tilde{u}^2d\theta dr\nonumber\\
\leq & C\varepsilon \eta \Big(\int_{0}^{1}\int_0^{2\pi}r\tilde{u}^2 d\theta dr\Big)^{\frac12}\Big(\int_{0}^{1}\int_0^{2\pi}\frac{ru^2_0(r)}{(r-1)^2} d\theta dr\Big)^{\frac12}\nonumber\\
&+C\varepsilon  \Big(\int_{0}^{1}\int_0^{2\pi}r\tilde{u}^2 d\theta dr\Big)^{\frac12}\Big(\int_{0}^{1}\int_0^{2\pi}ru^2_0(r) d\theta dr\Big)^{\frac12}+C \int_{0}^{1}\int_0^{2\pi}ru^2_\theta d\theta dr\nonumber\\
 \leq & C\varepsilon \eta \Big(\int_{0}^{1}\int_0^{2\pi}ru^2_\theta d\theta dr\Big)^{\frac12}\Big(\int_{0}^{1}\int_0^{2\pi}r(u'_0(r))^2 d\theta dr\Big)^{\frac12}+C \int_{0}^{1}\int_0^{2\pi}ru^2_\theta d\theta dr\nonumber\\
 \leq&C\int_{0}^{1}\int_0^{2\pi}ru_\theta^2d\theta dr+\frac{\varepsilon^2}{20}\int_{0}^{1}\int_0^{2\pi}ru_r^2d\theta dr.
\end{align}
Thus, we obtain
\begin{align*}
|I_2|\leq C\int_{0}^{1}\int_0^{2\pi}ru_\theta^2d\theta dr+\frac{\varepsilon^2}{10}\int_{0}^{1}\int_0^{2\pi}ru_r^2d\theta dr.
\end{align*}
{\bf 3)Estimate of $I_3$:} First, there holds
\begin{align*}
I_3=\underbrace{\int_{0}^{1}\int_0^{2\pi}uv(\partial_\theta v_e^a-u_e^a+ar)d\theta dr}_{I_{31}}+\underbrace{\int_{0}^{1}\int_0^{2\pi}uv(\partial_\theta \tilde{v}^a_p-\tilde{u}_p^a)d\theta dr}_{I_{32}}.
\end{align*}
Using (\ref{estimate on Euler parts}) and (\ref{Hardy type inequality}), we deduce that
\begin{align*}
|I_{31}|\leq & C\varepsilon \eta \int_{0}^{1}\int_0^{2\pi}r|uv|d\theta dr\\
 \leq&C\int_{0}^{1}\int_0^{2\pi}rv_\theta^2d\theta dr+\frac{\varepsilon^2}{20}\int_{0}^{1}\int_0^{2\pi}ru_r^2d\theta dr.
\end{align*}
The similar estimate as (\ref{estimate of prandtl part}) gives
\begin{align*}
|I_{32}|\leq C\int_{0}^{1}\int_0^{2\pi}rv_\theta^2d\theta dr+\frac{\varepsilon^2}{20}\int_{0}^{1}\int_0^{2\pi}ru_r^2d\theta dr.
\end{align*}
Thus, there holds
\begin{align*}
|I_{3}|\leq C\int_{0}^{1}\int_0^{2\pi}rv_\theta^2d\theta dr+\frac{\varepsilon^2}{10}\int_{0}^{1}\int_0^{2\pi}ru_r^2d\theta dr.
\end{align*}
Finally, summing the estimate of $I_1-I_3$, we obtain
\begin{align}\label{e:linear term in energy estimate}
\Big|\int_{0}^{1}\int_0^{2\pi}\big(uS_u +vS_v \big)d\theta dr\Big|\leq \frac{\varepsilon^2}{3}\int_{0}^{1}\int_0^{2\pi}r(u_r^2+v_r^2)d\theta dr+C\int_{0}^{1}\int_0^{2\pi}r(v_\theta^2+u^2_\theta)d\theta dr.
\end{align}
Collecting the estimates (\ref{e:diffusion estimate in basic energy estimate}), (\ref{e:pressure estimate in basic energy estimate}) and (\ref{e:linear term in energy estimate}), we obtain (\ref{e:basic Energy estimate}).
\end{proof}

\subsubsection{Positivity estimate}
\indent

In this subsection, we establish the following positivity estimate.
\begin{lemma}\label{positivity estimate}
Let  $(u,v)$ be a bounded solution of (\ref{linear equation of error}), then there exists $\eta_0>0$ such that for any $\eta\in (0,\eta_0)$, there holds
\begin{align}\label{e:positivity estimate}
\int_0^1\int_0^{2\pi}r(u_\theta^2+v_\theta^2)d\theta dr
\leq& C\varepsilon^2\eta\Big(\int_{0}^{1}\int_0^{2\pi}\frac{ (u_\theta+v)^2+(v_\theta-u)^2}{r}d\theta dr+\int_{0}^{1}\int_0^{2\pi}r\big(u_r^2+v_r^2\big)d\theta dr\Big)\nonumber\\
&+C\int_{0}^{1}\int_0^{2\pi}\big[u_\theta F_u+v_\theta F_v\big]d\theta dr.
\end{align}
\end{lemma}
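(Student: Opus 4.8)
The plan is to multiply the first equation of \eqref{linear equation of error} by $u_\theta$, the second by $v_\theta$, add and integrate over $\Omega$. The point of this multiplier choice, as anticipated in the introduction, is threefold: the pressure contribution $\int (p_\theta u_\theta + r p_r v_\theta)$ vanishes after integrating the second term by parts in $r$ and using the divergence-free condition $u_\theta = -(rv)_r$ (the boundary terms at $r=1$ vanish since $u,v$ vanish there, and the $\theta$-periodicity kills the rest); the diffusion terms $-\varepsilon^2(\cdots)$ paired with $(u_\theta,v_\theta)$ produce, after integration by parts in $\theta$, a perfect $\theta$-derivative of the Dirichlet-type energy, hence integrate to zero over the period; and the convective term $u^a u_\theta \cdot u_\theta + u^a v_\theta \cdot v_\theta = u^a(u_\theta^2+v_\theta^2)$ is the positive quantity we are after, since $u^a = ar + \chi(r) u_p^{(0)}(\theta,\frac{r-1}{\varepsilon})$ is strictly positive for small $\eta$.

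The main work is to control the remaining convective terms. Write $S_u = u^a u_\theta + v^a r u_r + u u^a_\theta + v r u^a_r + v^a u + v u^a$ and similarly for $S_v$, and split $u^a = ar + \chi u_p^a$ (with the Prandtl part), $v^a = v^a_e + \chi v_p^a$. The leading term gives $\int_0^1\int_0^{2\pi} ar(u_\theta^2 + v_\theta^2)\,d\theta\,dr$. Every other term must be bounded by $C\eta$ times this quantity plus $C\varepsilon^2\eta$ times the diffusion energy (plus the forcing terms). For the Euler parts this uses \eqref{estimate on Euler parts}, namely $\|v^a_e\|_0 + \|\partial v^a_e\|_0 \le C\varepsilon\eta$ and the combined estimates $|\partial_\theta u^a_e + v^a_e| + |\partial_\theta v^a_e - u^a_e + ar| \le C\varepsilon\eta r$, together with the Poincaré inequality \eqref{poin} to bound $\|v\|_2$ by $\|v_\theta\|_2$ and the Hardy inequalities \eqref{Hardy type inequality}, \eqref{weight Hardy} to bound $\|u\|_2$ and weighted norms of $u$ by $\varepsilon\|\sqrt r\,u_r\|_2$. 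For the Prandtl parts, since $\chi$ is supported in $[\tfrac12,1]$ and $\partial_r$ acting on $u_p^a(\theta,\frac{r-1}{\varepsilon})$ produces a factor $\varepsilon^{-1}$, one rewrites the troublesome factors using $\partial_Y$ and the identity $\frac{1}{r-1} = \frac{1}{\varepsilon Y}$, then absorbs the $Y$ weights into the fast decay of $u_p^a, v_p^a$ (estimates \eqref{estimate on Prandtl parts}: $\|(Y^j\partial_Y^k u_p^a)\|_0 \le C\eta$, $\|(Y^j\partial_Y^k v_p^a)\|_0 \le C\varepsilon\eta$) and applies the cut-off Hardy inequality \eqref{cut Hardy}. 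The typical estimate, e.g. for $\int v r \partial_r(\chi u_p^{(0)}) u_\theta$, is exactly the one displayed in the introduction: it is bounded by $C\eta\|\sqrt r(u_\theta,v_\theta)\|_2^2$ plus, from the $\frac{u}{r-1}$-type terms, $C\varepsilon\eta\|\sqrt r\,u_r\|_2\|\sqrt r(u_\theta,v_\theta)\|_2$, which after Young's inequality gives the $C\varepsilon^2\eta\|\sqrt r\,u_r\|_2^2$ contribution.

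Putting these together yields
\[
(a - C\eta)\int_0^1\int_0^{2\pi} r(u_\theta^2 + v_\theta^2)\,d\theta\,dr \le C\varepsilon^2\eta\int_0^1\int_0^{2\pi} r(u_r^2+v_r^2)\,d\theta\,dr + C\int_0^1\int_0^{2\pi}[u_\theta F_u + v_\theta F_v]\,d\theta\,dr,
\]
and choosing $\eta_0$ small so that $a - C\eta \ge a/2 > 0$ gives \eqref{e:positivity estimate} (with the $\frac{(u_\theta+v)^2+(v_\theta-u)^2}{r}$ form of the diffusion energy recovered as in \eqref{e:diffusion estimate in basic energy estimate}). The step I expect to be the main obstacle is the careful bookkeeping of the Prandtl-part terms involving $v r u^a_r$ and $u u^a_\theta$: here the singular factor $\varepsilon^{-1}$ from differentiating the boundary layer in $r$ must be traded, via one power of $(r-1)^{-1}$ and the weighted Hardy inequality, against a derivative hitting $u$ or $v$, and one must verify that the resulting power of $\varepsilon$ is genuinely $\varepsilon^2$ (not merely $\varepsilon$) so that the diffusion energy on the right can be absorbed after combining with the basic energy estimate of Lemma \ref{basic Energy estimate}; keeping track of exactly where each $\varepsilon$ comes from is the delicate point.
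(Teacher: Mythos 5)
Your proposal is correct and follows essentially the same route as the paper: multiply by $(u_\theta,v_\theta)$, observe that the pressure and diffusion contributions vanish, extract the positive term $a\int r(u_\theta^2+v_\theta^2)$ from the Couette part of $u^a$, and absorb the remaining convective terms via the smallness $\eta$, the Poincar\'e inequality for $v$, and the (cut-off) Hardy inequalities trading the $\varepsilon^{-1}$ from $\partial_r$ of the boundary layer against $(r-1)^{-1}$ weights and $Y$-decay. The only cosmetic difference is that your final display omits the $C\varepsilon^2\eta\int r^{-1}\big[(u_\theta+v)^2+(v_\theta-u)^2\big]$ term (which in the paper arises from the terms $\bar u u_\theta(v+u_\theta)$ and $(v_\theta-u)\bar u v_\theta$), but you acknowledge it parenthetically, so there is no real gap.
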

\begin{proof}
Multipling the first equation by $u_\theta$ and the second equation by $v_\theta$, integrating in $\Omega$ and summing two terms together, we arrive at
\begin{align}\label{p:positivity estimate}
&\int_{r_0}^{1}\int_0^{2\pi}\Big[-\varepsilon^2\big(r u_{rr}
+\frac{u_{\theta\theta}}{r}+2\frac{v_{\theta}}{r}+u_r-\frac{u}{r}\big)
+p_\theta+S_u\Big]u_\theta d\theta dr\nonumber\\
&+\int_{r_0}^{1}\int_0^{2\pi}\Big[-\varepsilon^2 \big(rv_{rr}+\frac{v_{\theta\theta}}{r}-2\frac{u_{\theta}}{r}+v_r
-\frac{v}{r}\big)+rp_r+S_v\Big]v_\theta d\theta dr\nonumber\\
=&\int_{0}^{1}\int_0^{2\pi}\Big[u_\theta F_u+v_\theta F_v\Big]d\theta dr.
\end{align}
{\bf Positivity term:} We first deal with these terms which is related to $S_u, S_v$.
For simplicity of notation, we set $\bar{u}=u^a-ar, \bar{v}=v^a$, thus we deduce that
\begin{align*}
&\int_{0}^{1}\int_0^{2\pi}[S_uu_\theta+S_v v_\theta] d\theta dr\\
=&\int_{0}^{1}\int_0^{2\pi}\big(aru_\theta+vra+var\big)u_\theta d\theta dr
+\int_{0}^{1}\int_0^{2\pi}\big(arv_\theta-2uar\big)v_\theta d\theta dr\\
&+\int_{0}^{1}\int_0^{2\pi}\big(\bar{u}u_\theta+\bar{v}ru_r+u\bar{u}_\theta+vr\bar{u}_r+\bar{v}u+v\bar{u}\big)u_\theta d\theta dr\\
&+\int_{0}^{1}\int_0^{2\pi}\big(\bar{u}v_\theta+\bar{v}rv_r+u\bar{v}_\theta+vr\bar{v}_r-2u\bar{u}\big)v_\theta d\theta dr\\
=&\underbrace{\int_{0}^{1}\int_0^{2\pi}\big(aru_\theta+2arv\big)u_\theta d\theta dr
+\int_{0}^{1}\int_0^{2\pi}\big(arv_\theta-2aru\big)v_\theta d\theta dr}_{I_1}\\
&+\underbrace{\int_{0}^{1}\int_0^{2\pi}\big(\bar{v}ru_ru_\theta+\bar{v}rv_rv_\theta)d\theta dr}_{I_2}+\underbrace{\int_0^1\int_0^{2\pi}(rv \bar{u}_ru_\theta+rv\bar{v}_rv_\theta) d\theta dr}_{I_3}\\
&+\underbrace{\int_{0}^{1}\int_0^{2\pi}(\bar{u}_\theta+\bar{v})uu_\theta d\theta dr}_{I_4}+\underbrace{\int_{0}^{1}\int_0^{2\pi}\bar{u}u_\theta(v+u_\theta)d\theta dr}_{I_5}\\
&+\underbrace{\int_{0}^{1}\int_0^{2\pi}(v_\theta-u)\bar{u}v_\theta d\theta dr}_{I_6}+\underbrace{\int_{0}^{1}\int_0^{2\pi}uv_\theta(\bar{v}_\theta-\bar{u})d\theta dr}_{I_7}.
\end{align*}
Direct computation gives that
\begin{align*}
I_1=\int_{0}^{1}\int_0^{2\pi}ar(u^2_\theta+v^2_\theta)d\theta dr.
\end{align*}
Next, we estimate $\{I_i\}_{i=2}^7$ term by term.

{\bf 1)Estimate of $I_2$.}
Using (\ref{estimate on Euler parts}) and (\ref{estimate on Prandtl parts}), we obtain
\begin{align*}
&\Big|\int_0^1\int_0^{2\pi}\bar{v}ru_ru_\theta d\theta dr\Big|\\
\leq& C\varepsilon \eta\Big(\int_{0}^{1}\int_0^{2\pi}ru_r^2d\theta dr\Big)^{\frac12}\Big(\int_{0}^{1}\int_0^{2\pi}ru_\theta^2d\theta dr\Big)^{\frac12}\\
\leq& \eta\int_{0}^{1}\int_0^{2\pi}ru_\theta^2d\theta dr+C\varepsilon^2\eta\int_{0}^{1}\int_0^{2\pi}ru_r^2d\theta dr.
\end{align*}
Similarly, there holds
\begin{align*}
&\Big|\int_0^1\int_0^{2\pi}\bar{v}rv_rv_\theta d\theta dr\Big|\\
\leq& \eta\int_{0}^{1}\int_0^{2\pi}rv_\theta^2d\theta dr+C\varepsilon^2\eta\int_{0}^{1}\int_0^{2\pi}rv_r^2d\theta dr.
\end{align*}
Thus, we obtain
\begin{align*}
I_2\leq\eta\int_{0}^{1}\int_0^{2\pi}r(u^2_\theta+v_\theta^2)d\theta dr+C\varepsilon^2\eta\int_{0}^{1}\int_0^{2\pi}r(u_r^2+v_r^2)d\theta dr.
\end{align*}
{\bf 2)Estimate of $I_3$.}
Firstly, by (\ref{estimate on Euler parts}) and  the Poincar\'{e} inequality (\ref{poin}), we obtain
\begin{align*}
&\Big|\int_0^1\int_0^{2\pi}rv \partial_r(u^a_e-ar)u_\theta d\theta dr\Big|\leq C\varepsilon\eta \Big(\int_0^1\int_0^{2\pi}ru_\theta^2d\theta dr\Big)^{\frac12}\Big(\int_0^1\int_0^{2\pi}rv_\theta^2d\theta dr\Big)^{\frac12},\\
&\Big|\int_0^1\int_0^{2\pi}rv \partial_rv^a_ev_\theta d\theta dr\Big|\leq C\varepsilon\eta \int_0^1\int_0^{2\pi}rv_\theta^2d\theta dr.
\end{align*}
Then, notice that $\chi(r)=0$ for $r\leq \frac12$, by (\ref{estimate on Prandtl parts}) and the Hardy inequality (\ref{cut Hardy})
\begin{align*}
&\Big|\int_0^1\int_0^{2\pi}rv \partial_r(\chi u^a_p)u_\theta d\theta dr\Big|\\
=&\Big|\int_0^1\int_0^{2\pi}rv \chi' u^a_pu_\theta d\theta dr+\int_0^1\int_0^{2\pi}rv \chi \partial_ru^a_pu_\theta d\theta dr\Big|\\
=&\Big|\int_0^1\int_0^{2\pi}rv \chi' u^a_pu_\theta d\theta dr+\int_0^1\int_0^{2\pi}\frac{rv}{r-1} \chi Y\partial_Yu^a_pu_\theta d\theta dr\Big|\\
\leq& C\eta \int_0^1\int_0^{2\pi}r(u_\theta^2+v_\theta^2)d\theta dr.
\end{align*}
By (\ref{estimate on Prandtl parts}) and  the Poincar\'{e} inequality (\ref{poin}), we deduce that
\begin{align*}
&\Big|\int_0^1\int_0^{2\pi}rv \partial_r(\chi v^a_p)v_\theta d\theta dr\Big|\\
=&\Big|\int_0^1\int_0^{2\pi}rv \chi' v^a_pv_\theta d\theta dr+\int_0^1\int_0^{2\pi}rv \chi \partial_rv^a_pv_\theta d\theta dr\Big|
\leq C\eta \int_0^1\int_0^{2\pi}rv_\theta^2d\theta dr.
\end{align*}
Thus, there holds
\begin{align*}
|I_3|\leq C\eta \int_0^1\int_0^{2\pi}r(u_\theta^2+v^2_\theta)d\theta dr.
\end{align*}
{\bf 3)Estimate of $I_4$ and $I_7$:} we first decompose $I_4$ into two parts
\begin{align*}
I_4=\underbrace{\int_{0}^{1}\int_0^{2\pi}(\partial_\theta u^a_e+v^a_e)uu_\theta d\theta dr}_{I_{41}}+\underbrace{\int_{0}^{1}\int_0^{2\pi}(\partial_\theta \tilde{u}^a_p+\tilde{v}^a_p)uu_\theta d\theta dr}_{I_{42}}
\end{align*}
 By (\ref{estimate on Euler parts}) and (\ref{Hardy type inequality}), we deduce that
\begin{align*}
|I_{41}|\leq& C\varepsilon\eta \Big|\int_0^1 \int_0^{2\pi}ruu_\theta drd\theta \Big|\\
\leq& C\varepsilon\eta
\Big(\int_0^1 \int_0^{2\pi}ru_r^2 drd\theta\Big)^{\frac12}\Big(\int_0^1 \int_0^{2\pi}ru_\theta^2 drd\theta\Big)^{\frac12}\\
\leq& \eta\int_{0}^{1}\int_0^{2\pi}ru_\theta^2d\theta dr+C\varepsilon^2\eta\int_{0}^{1}\int_0^{2\pi}ru_r^2d\theta dr.
\end{align*}
Moreover, notice that $\chi(r)=0$ for $r\leq \frac12$, by (\ref{estimate on Prandtl parts}) and the Hardy inequality (\ref{cut Hardy}), we deduce that
\begin{align*}
|I_{42}|=&\varepsilon\Big|\int_{0}^{1}\int_0^{2\pi}Y(\partial_\theta \tilde{u}^a_p+\tilde{v}^a_p)\frac{uu_\theta }{r-1}d\theta dr\Big|\\
\leq& \eta\int_{0}^{1}\int_0^{2\pi}ru_\theta^2d\theta dr+C\varepsilon^2\eta\int_{0}^{1}\int_0^{2\pi}ru_r^2d\theta dr.
\end{align*}
Thus, there holds
\begin{align*}
|I_{4}|
\leq\eta\int_{0}^{1}\int_0^{2\pi}ru_\theta^2d\theta dr+C\varepsilon^2\eta\int_{0}^{1}\int_0^{2\pi}ru_r^2d\theta dr.
\end{align*}
Same argument gives
\begin{align*}
|I_7|\leq \eta\int_{0}^{1}\int_0^{2\pi}rv_\theta^2d\theta dr+C\varepsilon^2\eta\int_{0}^{1}\int_0^{2\pi}ru_r^2d\theta dr.
\end{align*}

Thus, there holds
\begin{align*}
|I_4|+|I_7|\leq \eta\int_{0}^{1}\int_0^{2\pi}r(u^2_\theta+v_\theta^2)d\theta dr+C\varepsilon^2\eta\int_{0}^{1}\int_0^{2\pi}ru_r^2d\theta dr.
\end{align*}
{\bf 4)Estimate of $I_5$ and $I_6$:} By (\ref{estimate on Euler parts}) and (\ref{estimate on Prandtl parts}), we deduce that
\begin{align*}
|I_5|\leq& \underbrace{\Big|\int_0^1\int_0^{2\pi}\chi(r)u_p^{(0)}u_\theta(v+u_\theta)d\theta dr\Big|}_{I_{51}}\\
&+C\varepsilon \eta \Big(\int_0^1\int_0^{2\pi} ru^2_\theta d\theta dr\Big)^{\frac 12}
\Big(\int_0^1\int_0^{2\pi} \frac{(u_\theta+v)^2}{r} d\theta dr\Big)^{\frac 12}\\
\leq &I_{51}+\eta\int_{0}^{1}\int_0^{2\pi}ru_\theta^2d\theta dr+C\varepsilon^2\eta\int_0^1\int_0^{2\pi} \frac{(u_\theta+v)^2}{r} d\theta dr.
\end{align*}
Moreover, notice that $\chi(r)=0$ for $r<\frac12$, by (\ref{estimate on Prandtl parts}) and the Hardy inequality (\ref{cut Hardy}), we deduce
\begin{align*}
I_{51}\leq& C\eta \int_0^1\int_0^{2\pi}ru^2_\theta d\theta dr+\varepsilon\Big|\int_0^1\int_0^{2\pi}Yu_p^{(0)}u_\theta \frac{\chi(r)v}{r-1}d\theta dr\Big|\\
 \leq & C\eta \int_0^1\int_0^{2\pi}ru^2_\theta d\theta dr+C\varepsilon\eta\Big(\int_0^1\int_0^{2\pi}ru^2_\theta d\theta dr\Big)^{\frac12}\Big(\int_0^1\int_0^{2\pi}rv^2_r d\theta dr\Big)^{\frac12}.
\end{align*}
Thus, there holds
\begin{align*}
|I_5|\leq C\eta\int_{0}^{1}\int_0^{2\pi}ru^2_\theta d\theta dr
+C\varepsilon^2\eta\Big(\int_0^1\int_0^{2\pi} \frac{(u_\theta+v)^2}{r} d\theta dr+\int_0^1\int_0^{2\pi} rv^2_rd\theta dr\Big).
\end{align*}
Similar argument gives that
\begin{align*}
|I_6|\leq C\eta\int_{0}^{1}\int_0^{2\pi}rv_\theta^2d\theta dr+C\varepsilon^2\eta\Big(\int_0^1\int_0^{2\pi} \frac{(v_\theta-u)^2}{r} d\theta dr+\int_0^1\int_0^{2\pi}ru^2_r d\theta dr\Big).
\end{align*}

Thus, we deduce that
\begin{align*}
|I_5|+|I_6|\leq& C\eta\int_{0}^{1}\int_0^{2\pi}r(u^2_\theta+v_\theta^2)d\theta dr\\
&+C\varepsilon^2\eta\Big(\int_0^1\int_0^{2\pi} \frac{(u_\theta+v)^2+(v_\theta-u)^2}{r} d\theta dr+\int_0^1\int_0^{2\pi} r(u^2_r+v^2_r)d\theta dr\Big).
\end{align*}

Finally, collecting these estimates $I_1-I_7$, we can choose $\eta_0>0$ such that for any $\eta\in (0,\eta_0)$, there holds
\begin{align}\label{e:linear term in positivity estimate}
&\int_{0}^{1}\int_0^{2\pi}[S_uu_\theta+S_v v_\theta] d\theta dr\nonumber\\[5pt]
\quad \quad \geq &(a-C\eta)\int_{0}^{1}\int_0^{2\pi}r(u^2_\theta+v^2_\theta)drd\theta\nonumber\\
&-C\varepsilon^2\eta\Big(\int_{0}^{1}\int_0^{2\pi}r(u_r^2+v_r^2) drd\theta+\int_0^1\int_0^{2\pi} \frac{(u_\theta+v)^2+(v_\theta-u)^2}{r} d\theta dr\Big).
\end{align}
{\bf Pressure estimate:} Integrating by parts, we deduce that
\begin{align}\label{e:pressure estimate in positivity estimate}
\int_{0}^{1}\int_0^{2\pi}p_\theta u_\theta d\theta dr
+\int_{0}^{1}\int_0^{2\pi}rp_rv_\theta d\theta dr=0.
\end{align}
{\bf Diffusion term:}
Finally, we deal with the diffusion term:
\begin{align}\label{diffusive term}
&\int_{0}^{1}\int_0^{2\pi}\Big[-\varepsilon^2\big(r u_{rr}
+\frac{u_{\theta\theta}}{r}+2\frac{v_{\theta}}{r}+u_r-\frac{u}{r}\big)
\Big]u_\theta d\theta dr\nonumber\\
&+\int_{0}^{1}\int_0^{2\pi}\Big[-\varepsilon^2 \big(rv_{rr}+\frac{v_{\theta\theta}}{r}-2\frac{u_{\theta}}{r}+v_r
-\frac{v}{r}\big)\Big]v_\theta d\theta dr\nonumber\\
=&-\varepsilon^2\int_{0}^1\int_0^{2\pi} (r u_{rr}+ u_{r})u_\theta d\theta dr
-\varepsilon^2\int_{0}^1\int_0^{2\pi} (r v_{rr}+ v_{r})v_\theta d\theta dr=0.
\end{align}
Finally, collecting the estimate (\ref{e:linear term in positivity estimate}), (\ref{e:pressure estimate in positivity estimate}) and (\ref{diffusive term}) together, we obtain (\ref{e:positivity estimate}) which completes the proof of this lemma.
\end{proof}

\subsubsection{Linear stability estimate.}
\begin{Proposition}
 Let $(u,v)$ be a bounded solution of (\ref{linear equation of error}), then there exist $\varepsilon_0>0, \eta_0>0$ such that for any $\varepsilon\in (0,\varepsilon_0), \eta\in(0,\eta_0)$, there holds
\begin{align}\label{e:linear stability estimates for linear equation}
&\int_0^1\int_0^{2\pi}r(u_\theta^2+v_\theta^2)d\theta dr+\varepsilon^2\Big(\int_{0}^{1}\int_0^{2\pi}\frac{ (u_\theta+v)^2+(v_\theta-u)^2}{r}d\theta dr+\int_{0}^{1}\int_0^{2\pi}r\big(u_r^2+v_r^2\big)d\theta dr\Big)\nonumber\\
\leq& C\varepsilon^{-2}\int_{0}^{1}\int_0^{2\pi}\Big(\frac{F^2_u}{r}+\frac{F^2_v}{r} \Big)d\theta dr.
\end{align}
\end{Proposition}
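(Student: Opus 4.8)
The plan is to chain Lemma~\ref{basic Energy estimate} with Lemma~\ref{positivity estimate}, use the smallness of $\eta$ to absorb the destabilizing term, and finally close the forcing terms by the Cauchy--Schwarz inequality together with the Hardy inequality \eqref{Hardy type inequality}. Throughout, $(u,v)$ is the given bounded solution of \eqref{linear equation of error}, for which the identity $\int_0^{2\pi}v\,d\theta=0$, the Poincar\'e inequality \eqref{poin} and the Hardy inequalities of the previous lemma are available.

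To keep the notation light, set
\begin{align*}
A:=\int_0^1\int_0^{2\pi}\frac{(u_\theta+v)^2+(v_\theta-u)^2}{r}\,d\theta dr,\quad
B:=\int_0^1\int_0^{2\pi}r(u_r^2+v_r^2)\,d\theta dr,\quad
P:=\int_0^1\int_0^{2\pi}r(u_\theta^2+v_\theta^2)\,d\theta dr.
\end{align*}
Then \eqref{e:basic Energy estimate} reads $\varepsilon^2(A+B)\le C_1P+C_1\big|\int(uF_u+vF_v)\big|$ and \eqref{e:positivity estimate} reads $P\le C_2\varepsilon^2\eta(A+B)+C_2\big|\int(u_\theta F_u+v_\theta F_v)\big|$. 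I would insert the second into the first and choose $\eta_0>0$ so small that $C_1C_2\eta\le\tfrac12$; the term $C_1C_2\varepsilon^2\eta(A+B)$ is then absorbed into the left side, leaving $\varepsilon^2(A+B)\le C\big(\big|\int(u_\theta F_u+v_\theta F_v)\big|+\big|\int(uF_u+vF_v)\big|\big)$. Feeding this back into \eqref{e:positivity estimate} produces the same bound for $P$, and adding the two yields
\begin{align*}
P+\varepsilon^2(A+B)\le C\Big|\int_0^1\int_0^{2\pi}(u_\theta F_u+v_\theta F_v)\,d\theta dr\Big|+C\Big|\int_0^1\int_0^{2\pi}(uF_u+vF_v)\,d\theta dr\Big|.
\end{align*}

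It then remains to estimate the two forcing integrals. For the first, Cauchy--Schwarz gives $\big|\int u_\theta F_u\big|\le\|\sqrt r\,u_\theta\|_2\,\|F_u/\sqrt r\|_2$, and similarly for $v_\theta F_v$; by Young's inequality with a small parameter the resulting $\|\sqrt r\,u_\theta\|_2^2+\|\sqrt r\,v_\theta\|_2^2$ (a small multiple of $P$) is absorbed into the left side, and since $\varepsilon_0$ may be taken $\le 1$ the remainder is bounded by $C\varepsilon^{-2}\int(F_u^2/r+F_v^2/r)$. For the second integral, the boundary conditions $u(\theta,1)=v(\theta,1)=0$ and \eqref{Hardy type inequality} with $\alpha=0$ (together with $r\le 1$) give $\|u\|_{L^2(\Omega)}^2+\|v\|_{L^2(\Omega)}^2\le CB$; since $\|F_u\|_{L^2}\le\|F_u/\sqrt r\|_{L^2}$, this yields $\big|\int uF_u\big|\le\|u\|_2\|F_u/\sqrt r\|_2\le\delta\varepsilon^2B+C_\delta\varepsilon^{-2}\|F_u/\sqrt r\|_2^2$, and likewise for $vF_v$; taking $\delta$ small, the terms $\delta\varepsilon^2B$ are absorbed into $\varepsilon^2B$ on the left. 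Collecting everything gives \eqref{e:linear stability estimates for linear equation}.

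There is no serious obstacle here; the one point that requires care is the bookkeeping of the powers of $\varepsilon$. The zeroth-order forcing terms $\int uF_u$, $\int vF_v$ can only be controlled through $\|u\|_2,\|v\|_2\lesssim\sqrt B$, that is, against the diffusion quantity $\varepsilon^2B$ rather than against $P$, and this is precisely what forces the $\varepsilon^{-2}$ loss on the right-hand side. Correspondingly, the whole scheme closes only because the destabilizing contribution in the positivity estimate carries the prefactor $\varepsilon^2\eta$, so that it is the smallness of $\eta$ — not that of $\varepsilon$ — that drives the absorption step.
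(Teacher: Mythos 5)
Your proposal is correct and follows essentially the same route as the paper: combine Lemma~\ref{basic Energy estimate} with Lemma~\ref{positivity estimate}, absorb the $C\varepsilon^2\eta(A+B)$ term using the smallness of $\eta$ (and $\varepsilon$), then control $\int(u_\theta F_u+v_\theta F_v)$ against $\|\sqrt{r}(u_\theta,v_\theta)\|_2$ and $\int(uF_u+vF_v)$ against $\|\sqrt{r}(u_r,v_r)\|_2$ via Hardy's inequality \eqref{Hardy type inequality}, which is exactly where the $\varepsilon^{-2}$ loss enters. Your closing remark about why the absorption is driven by $\eta$ rather than $\varepsilon$ matches the structure of the paper's argument.
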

\begin{proof}
By combining the estimates (\ref{e:basic Energy estimate}) and (\ref{p:positivity estimate}), we can choose $\varepsilon_0>0, \eta_0>0$ such that for any $\varepsilon\in (0,\varepsilon_0), \eta\in(0,\eta_0)$, there holds
 \begin{align}\label{first estimate of linear stability}
&\int_0^1\int_0^{2\pi}r(u_\theta^2+v_\theta^2)d\theta dr+\varepsilon^2\Big(\int_{0}^{1}\int_0^{2\pi}\frac{ (u_\theta+v)^2+(v_\theta-u)^2}{r}d\theta dr+\int_{0}^{1}\int_0^{2\pi}r\big(u_r^2+v_r^2\big)d\theta dr\Big)\nonumber\\
\leq& C\int_{0}^{1}\int_0^{2\pi}\big(uF_u +vF_v \big)d\theta dr+C\int_{0}^{1}\int_0^{2\pi}\Big[u_\theta F_u+v_\theta F_v\Big]d\theta dr.
\end{align}
By the H\"{o}lder inequality, we deduce that
\begin{align*}
\int_{0}^{1}\int_0^{2\pi}\big(u_\theta F_u +v_\theta F_v \big)d\theta dr\leq& \Big(\int_{0}^{1}\int_0^{2\pi}r\big(u_\theta^2+v_\theta^2\big)d\theta dr\Big)^{\frac12}\Big(\int_{0}^{1}\int_0^{2\pi}\Big(\frac{F^2_u}{r}+\frac{F^2_v}{r} \Big)d\theta dr\Big)^{\frac12},\\
\int_{0}^{1}\int_0^{2\pi}\big(u F_u +v F_v \big)d\theta dr\leq& \Big(\int_{0}^{1}\int_0^{2\pi}r\big(u_r^2+v_r^2\big)d\theta dr\Big)^{\frac12}\Big(\int_{0}^{1}\int_0^{2\pi}\big(F^2_u+F^2_v\big)d\theta dr\Big)^{\frac12},
\end{align*}
where we used (\ref{Hardy type inequality}) in the second inequality.  Putting this into (\ref{first estimate of linear stability}), we obtain (\ref{e:linear stability estimates for linear equation}).
\end{proof}
\section{Existence of error equations}
\indent

In this section we rewrite the error equations and the associated linear stability estimate in Euler coordinates, then obtain $H^2$ estimate by the Stokes estimate in a smooth bounded domain.

\subsection{Error equations in Euler coordinates}
\indent

Let
\begin{align*}
\vec{e}_\theta=
\left(\begin{array}{c}
-\sin \theta \\[5pt]
\cos \theta
\end{array}\right), \ \
\vec{e}_r=
\left(\begin{array}{c}
\cos \theta \\[5pt]
\sin \theta
\end{array}\right)
\end{align*}
and
\begin{align*}
\left(\begin{array}{c}
\tilde{u}(x,y) \\[5pt]
\tilde{v}(x,y)
\end{array}\right)
&=u(\theta, r)\vec{e}_\theta+v(\theta,r)\vec{e}_r, \ \
\left(\begin{array}{c}
\tilde{u}^a(x,y) \\[5pt]
\tilde{v}^a(x,y)
\end{array}\right)
=u^a(\theta, r)\vec{e}_\theta+v^a(\theta,r)\vec{e}_r,\\
\tilde{p}(x,y)&=p(\theta,r), \ \
\left(\begin{array}{c}
\tilde{R}^a_u(x,y) \\[5pt]
\tilde{R}^a_v(x,y)
\end{array}\right)
=\frac{R^a_u(\theta, r)}{r}\vec{e}_\theta+\frac{R^a_v(\theta, r)}{r}\vec{e}_r.
\end{align*}
Then the error equations (\ref{e:error equation}) can be written in Euler coordinates as follows
\begin{align}\label{error equation in Euler coordinates}
\left\{
\begin{array}{lll}
\tilde{u}^a\partial_x \tilde{u}+\tilde{v}^a\partial_y \tilde{u} + \tilde{u}\partial_x \tilde{u}^a +\tilde{v}\partial_y \tilde{u}^a+\partial_x\tilde{p}+\tilde{u}\partial_x \tilde{u}+\tilde{v}\partial_y \tilde{u}-\varepsilon^2 \Delta \tilde{u}= \tilde{R}^a_u,\\[5pt]
\tilde{u}^a\partial_x \tilde{v}+\tilde{v}^a\partial_y \tilde{v} + \tilde{u}\partial_x \tilde{v}^a +\tilde{v}\partial_y \tilde{v}^a+\partial_y\tilde{p}+\tilde{u}\partial_x \tilde{v}+\tilde{v}\partial_y \tilde{v}-\varepsilon^2 \Delta \tilde{v}= \tilde{R}^a_v,\\[5pt]
\partial_x\tilde{u}+\partial_y \tilde{v}=0,\\[5pt]
(\tilde{u},\tilde{v})|_{\partial B_1}=(0,0).
\end{array}
\right.
\end{align}

\subsection{Linear stability estimate in Euler coordinates}
\indent

Let
\begin{align*}
\left(\begin{array}{c}
\tilde{F}_u(x,y) \\[5pt]
\tilde{F}_v(x,y)
\end{array}\right)
=\frac{F_u(\theta, r)}{r}\vec{e}_\theta+\frac{F_v(\theta, r)}{r}\vec{e}_r.
\end{align*}
Then, the equations (\ref{linear equation of error}) become
\begin{align}\label{linear error equation in Euler coordinates}
\left\{
\begin{array}{lll}
\tilde{u}^a\partial_x \tilde{u}+\tilde{v}^a\partial_y \tilde{u} + \tilde{u}\partial_x \tilde{u}^a +\tilde{v}\partial_y \tilde{u}^a+\partial_x\tilde{p}-\varepsilon^2 \Delta \tilde{u}= \tilde{F}_u,\\[5pt]
\tilde{u}^a\partial_x \tilde{v}+\tilde{v}^a\partial_y \tilde{v} + \tilde{u}\partial_x \tilde{v}^a +\tilde{v}\partial_y \tilde{v}^a+\partial_y\tilde{p}-\varepsilon^2 \Delta \tilde{v}= \tilde{F}_v,\\[5pt]
\partial_x\tilde{u}+\partial_y \tilde{v}=0,\\[5pt]
(\tilde{u},\tilde{v})|_{\partial B_1}=(0,0).
\end{array}
\right.
\end{align}
Moreover, from the linear stability estimate (\ref{e:linear stability estimates for linear equation}), we can deduce that there exist $\varepsilon_0>0, \eta_0>0$ such that for any $\varepsilon\in (0,\varepsilon_0), \eta\in(0,\eta_0)$ and any solution $(\tilde{u},\tilde{v})$ of  (\ref{linear error equation in Euler coordinates}), there holds
\begin{align}\label{linear stability estimates in Euler coordinates}
\varepsilon\|\nabla(\tilde{u},\tilde{v})\|_2\leq C\varepsilon^{-1}\|(\tilde{F}_u, \tilde{F}_v)\|_2.
\end{align}

\subsection{Existence of error equations}
\indent

We apply the contraction mapping theorem to prove the existence of the error equations (\ref{e:error equation}).

\begin{Proposition}\label{existence and error estimate of error equation}
There exist $\varepsilon_0>0,\eta_0>0$ such that for any $\varepsilon\in (0,\varepsilon_0), \eta\in (0,\eta_0)$, the error equations (\ref{e:error equation}) have a unique solution $(u,v)$ which satisfies
$$\|(u,v)\|_\infty\leq C\varepsilon.$$
\end{Proposition}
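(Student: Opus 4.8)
The plan is to solve the nonlinear error system \eqref{e:error equation} by a contraction mapping argument carried out in Euler coordinates, where the domain $B_1$ is smooth and the standard Stokes theory is available. Define a solution operator $\mathcal T$ as follows: given a divergence-free $(\tilde u_*,\tilde v_*)$ with $(\tilde u_*,\tilde v_*)|_{\partial B_1}=0$, let $\mathcal T(\tilde u_*,\tilde v_*):=(\tilde u,\tilde v)$ be the solution of the \emph{linear} system \eqref{linear error equation in Euler coordinates} whose forcing is the transcription to Euler coordinates of $F_u=R_u^a-\tilde u_*\tilde u_{*\theta}-\tilde v_* r\tilde u_{*r}-\tilde v_*\tilde u_*$ and $F_v=R_v^a-\tilde u_*\tilde v_{*\theta}-\tilde v_* r\tilde v_{*r}+\tilde u_*^2$; in Euler coordinates this forcing is $(\tilde R^a_u,\tilde R^a_v)-(\tilde u_*\partial_x+\tilde v_*\partial_y)(\tilde u_*,\tilde v_*)$. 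A fixed point of $\mathcal T$ is exactly a solution of \eqref{e:error equation}. Solvability of the linear problem for each fixed forcing follows from the linear stability estimate \eqref{linear stability estimates in Euler coordinates} (which rules out the kernel) together with the Fredholm theory for the Stokes operator, since the advection and stretching terms $\tilde u^a\partial_x+\tilde v^a\partial_y$ and $(\cdot)\,\partial(\tilde u^a,\tilde v^a)$ form a compact perturbation.

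The heart of the matter is to bound $\mathcal T(\tilde u_*,\tilde v_*)=(\tilde u,\tilde v)$ in the right norms. The linear stability estimate \eqref{linear stability estimates in Euler coordinates} — the Euler-coordinate form of \eqref{e:linear stability estimates for linear equation} — gives $\|\nabla(\tilde u,\tilde v)\|_{L^2(B_1)}\le C\varepsilon^{-2}\|(\tilde F_u,\tilde F_v)\|_{L^2(B_1)}$. To upgrade this I would view \eqref{linear error equation in Euler coordinates} as a Stokes system $-\varepsilon^2\Delta(\tilde u,\tilde v)+\nabla\tilde p=\mathcal G$, $\nabla\cdot(\tilde u,\tilde v)=0$, $(\tilde u,\tilde v)|_{\partial B_1}=0$, with $\mathcal G=(\tilde F_u,\tilde F_v)-(\tilde u^a\partial_x+\tilde v^a\partial_y)(\tilde u,\tilde v)-(\tilde u\partial_x+\tilde v\partial_y)(\tilde u^a,\tilde v^a)$, and apply the $L^2$ Stokes estimate in the smooth domain $B_1$, namely $\|(\tilde u,\tilde v)\|_{H^2(B_1)}\le C\varepsilon^{-2}\|\mathcal G\|_{L^2(B_1)}$. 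The delicate input is $\nabla(\tilde u^a,\tilde v^a)$, which contains the normal derivative of the Prandtl layer $\chi(r)u_p^a(\theta,\tfrac{r-1}\varepsilon)$, of size $O(\varepsilon^{-1}\eta)$. As in the polar energy estimates one writes $\partial_r(\chi u_p^a)=\chi'u_p^a+\tfrac{\chi}{r-1}\,Y\partial_Yu_p^a$ with $\|Y\partial_Yu_p^a\|_\infty\le C\eta$ and uses the Hardy inequality \eqref{weight Hardy} on the component of $(\tilde u,\tilde v)$ normal to $\partial B_1$ (which vanishes there) to turn the dangerous term into an $O(\eta)\|\nabla(\tilde u,\tilde v)\|_{L^2}$ contribution; since $(u^a,v^a)$ is \emph{exactly} divergence-free, $\tilde u^a\partial_x+\tilde v^a\partial_y$ applied to $(\tilde u,\tilde v)$ costs only $\|(\tilde u^a,\tilde v^a)\|_\infty\|\nabla(\tilde u,\tilde v)\|_{L^2}$ with $\|(\tilde u^a,\tilde v^a)\|_\infty\le C$. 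Hence $\|\mathcal G\|_{L^2}\le C\|(\tilde F_u,\tilde F_v)\|_{L^2}+C\|\nabla(\tilde u,\tilde v)\|_{L^2}\le C\varepsilon^{-2}\|(\tilde F_u,\tilde F_v)\|_{L^2}$, so $\|(\tilde u,\tilde v)\|_{H^2(B_1)}\le C\varepsilon^{-4}\|(\tilde F_u,\tilde F_v)\|_{L^2(B_1)}$. Finally, since in two dimensions $H^1$ only barely fails to embed in $L^\infty$, instead of the lossy $H^2\hookrightarrow L^\infty$ I would interpolate: fixing any $p>2$, $\|(\tilde u,\tilde v)\|_{L^\infty}\le C\|(\tilde u,\tilde v)\|_{W^{1,p}}\le C\|\nabla(\tilde u,\tilde v)\|_{L^2}^{2/p}\|(\tilde u,\tilde v)\|_{H^2}^{1-2/p}+C\|\nabla(\tilde u,\tilde v)\|_{L^2}$, and the two bounds above give, for every $\delta>0$ (taking $p$ close to $2$),
$$
\|(\tilde u,\tilde v)\|_{L^\infty(B_1)}\le C\varepsilon^{-2-\delta}\|(\tilde F_u,\tilde F_v)\|_{L^2(B_1)}.
$$

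The contraction then closes because the consistency error is tiny: by \eqref{remainder estimate}, $\|(\tilde R^a_u,\tilde R^a_v)\|_{L^2(B_1)}\le C\varepsilon^5$, which beats all the negative powers of $\varepsilon$ above. I would run $\mathcal T$ on the ball $\mathcal B$ of divergence-free fields vanishing on $\partial B_1$ with $\|(\tilde u_*,\tilde v_*)\|_{L^\infty}\le A\varepsilon^{3-\delta}$ and $\|\nabla(\tilde u_*,\tilde v_*)\|_{L^2}\le B\varepsilon^3$, for fixed $A,B$ large and $\varepsilon,\eta$ small. On $\mathcal B$ the quadratic term satisfies $\|(\tilde u_*\partial_x+\tilde v_*\partial_y)(\tilde u_*,\tilde v_*)\|_{L^2}\le\|(\tilde u_*,\tilde v_*)\|_{L^\infty}\|\nabla(\tilde u_*,\tilde v_*)\|_{L^2}\le AB\,\varepsilon^{6-\delta}$, so $\|(\tilde F_u,\tilde F_v)\|_{L^2}\le C\varepsilon^5+AB\varepsilon^{6-\delta}\le 2C\varepsilon^5$; substituting into the bounds of the previous paragraph shows $\mathcal T(\mathcal B)\subset\mathcal B$ once $A,B$ are suitably large and $\varepsilon$ small. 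The difference $\mathcal T(\tilde u_1,\tilde v_1)-\mathcal T(\tilde u_2,\tilde v_2)$ solves the same linear system with forcing the difference of the quadratic terms, bounded on $\mathcal B$ by $C\varepsilon^{3-\delta}\|\nabla(\tilde u_1-\tilde u_2,\tilde v_1-\tilde v_2)\|_{L^2}+C\varepsilon^{3}\|(\tilde u_1-\tilde u_2,\tilde v_1-\tilde v_2)\|_{L^\infty}$; combined with the prefactors $C\varepsilon^{-2-\delta}$ and $C\varepsilon^{-2}$, this makes $\mathcal T$ a contraction on $\mathcal B$ for the norm $\varepsilon^{\delta}\|\cdot\|_{L^\infty}+\|\nabla\cdot\|_{L^2}$ when $\varepsilon$ is small. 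The Banach fixed point theorem then gives a unique $(\tilde u,\tilde v)\in\mathcal B$, hence (back in polar coordinates) a solution $(u,v)$ of \eqref{e:error equation} with $\|(u,v)\|_{L^\infty}\le A\varepsilon^{3-\delta}\le C\varepsilon$; uniqueness in the full class follows by applying the linear stability estimate to the difference of two solutions. The step I expect to be the main obstacle is precisely the $H^2$/$L^\infty$ bookkeeping in the presence of the boundary layer — controlling $\nabla(\tilde u^a,\tilde v^a)$, whose normal derivative is $O(\varepsilon^{-1})$, without spoiling the power of $\varepsilon$ — which is where the exact divergence-freeness of $(u^a,v^a)$, the Hardy inequality \eqref{weight Hardy}, and the high order $O(\varepsilon^5)$ of the approximate solution are all used.
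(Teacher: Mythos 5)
Your proposal follows essentially the same route as the paper: a contraction mapping in Euler coordinates built on the linear stability estimate (\ref{linear stability estimates in Euler coordinates}), the Stokes $H^2$ estimate in $B_1$ with the Hardy inequality (\ref{weight Hardy}) absorbing the $O(\varepsilon^{-1})$ normal derivative of the boundary-layer part of $(\tilde u^a,\tilde v^a)$ through the vanishing normal component, and the $O(\varepsilon^5)$ consistency error (\ref{remainder estimate}) closing the nonlinearity. The only (immaterial) differences are bookkeeping ones: the paper interpolates with the fixed exponents $\|\cdot\|_\infty\le C\|\nabla\cdot\|_2^{3/4}\|\nabla^2\cdot\|_2^{1/4}$ and contracts in the single norm $\varepsilon\|\nabla\cdot\|_2+\varepsilon^3\|\nabla^2\cdot\|_2$ on a ball of radius $O(\varepsilon^4)$, whereas you use $W^{1,p}$ with $p$ near $2$ and a two-norm ball, both yielding the same conclusion.
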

\begin{proof}
For each smooth function $(\tilde{u},\tilde{v})$ which satisfies
\begin{align}\label{iterative conditon}
\left\{
\begin{array}{ll}
 \partial_x\tilde{u}+\partial_y \tilde{v}=0,\\[5pt]
(\tilde{u},\tilde{v})|_{\partial B_1}=(0,0),
\end{array}
\right.
\end{align}
we consider the following linear system
\begin{align}
\left\{
\begin{array}{lll}
\tilde{u}^a\partial_x \bar{u}+\tilde{v}^a\partial_y \bar{u} + \bar{u}\partial_x \tilde{u}^a +\bar{v}\partial_y \tilde{u}^a+\partial_x\bar{p}-\varepsilon^2 \Delta \bar{u}= \tilde{R}^a_u-\tilde{u}\partial_x \tilde{u}-\tilde{v}\partial_y \tilde{u},\\[5pt]
\tilde{u}^a\partial_x \bar{v}+\tilde{v}^a\partial_y \bar{v} + \bar{u}\partial_x \tilde{v}^a +\bar{v}\partial_y \tilde{v}^a+\partial_y\bar{p}-\varepsilon^2 \Delta \bar{v}= \tilde{R}^a_v-\tilde{u}\partial_x \tilde{v}-\tilde{v}\partial_y \tilde{v},\\[5pt]
\partial_x\bar{u}+\partial_y \bar{v}=0,\\[5pt]
(\bar{u},\bar{v})|_{\partial B_1}=(0,0).
\end{array}
\right.
\end{align}

By linear stability estimate (\ref{linear stability estimates in Euler coordinates}), we deduce that there exist $\varepsilon_0>0, \eta_0>0$ such that for any $\varepsilon\in (0,\varepsilon_0), \eta\in(0,\eta_0)$, there holds
\begin{align}\label{linear stability estimate}
\varepsilon\|\nabla(\bar{u},\bar{v})\|_2\leq& C\varepsilon^{-1}\|( \tilde{R}^a_u-\tilde{u}\partial_x \tilde{u}-\tilde{v}\partial_y \tilde{u}, \tilde{R}^a_v-\tilde{u}\partial_x \tilde{v}-\tilde{v}\partial_y \tilde{v})\|_2\nonumber\\[3pt]
\leq & C\varepsilon^{-1}\|( \tilde{R}^a_u, \tilde{R}^a_v)\|_2+ C\varepsilon^{-1}\|(\tilde{u},\tilde{v})\|_\infty \|\nabla(\tilde{u},\tilde{v})\|_2.
\end{align}
Then, by Stokes estimates in smooth domain, we deduce that
\begin{align*}
\varepsilon^2\|\nabla^2(\bar{u},\bar{v})\|_2\leq& C\|\tilde{R}^a_u-\tilde{u}\partial_x \tilde{u}-\tilde{v}\partial_y \tilde{u}-(\tilde{u}^a\partial_x \bar{u}+\tilde{v}^a\partial_y \bar{u} + \bar{u}\partial_x \tilde{u}^a +\bar{v}\partial_y \tilde{u}^a)\|_2\\[3pt]
&+C\|\tilde{R}^a_v-\tilde{u}\partial_x \tilde{v}-\tilde{v}\partial_y \tilde{v}-(\tilde{u}^a\partial_x \bar{v}+\tilde{v}^a\partial_y \bar{v} + \bar{u}\partial_x \tilde{v}^a +\bar{v}\partial_y \tilde{v}^a)\|_2\\[3pt]
\leq &  C\|( \tilde{R}^a_u, \tilde{R}^a_v)\|_2+ C\|(\tilde{u},\tilde{v})\|_\infty \|\nabla(\tilde{u},\tilde{v})\|_2\\[3pt]
&+C\|\tilde{u}^a\partial_x \bar{u}+\tilde{v}^a\partial_y \bar{u} + \bar{u}\partial_x \tilde{u}^a +\bar{v}\partial_y \tilde{u}^a\|_2\\[3pt]
&+C\|\tilde{u}^a\partial_x \bar{v}+\tilde{v}^a\partial_y \bar{v} + \bar{u}\partial_x \tilde{v}^a +\bar{v}\partial_y \tilde{v}^a\|_2.
\end{align*}
Using $\|(\tilde{u}^a, \tilde{v}^a)\|_\infty\leq C$, it is easy to obtain
\begin{align*}
\|\tilde{u}^a\partial_x \bar{u}+\tilde{v}^a\partial_y \bar{u} \|_2
+\|\tilde{u}^a\partial_x \bar{v}+\tilde{v}^a\partial_y \bar{v}\|_2\leq C\|\nabla(\bar{u},\bar{v})\|_2.
\end{align*}
Moreover, let
\begin{align*}
\left(\begin{array}{c}
\bar{u}(x,y)\\[5pt]
\bar{v}(x,y)
\end{array}
\right)=
\hat{u}(\theta, r)\vec{e}_\theta+\hat{v}(\theta,r)\vec{e}_r,
\end{align*}
then there holds
\begin{align*}
&\|\bar{u}\partial_x \tilde{u}^a +\bar{v}\partial_y \tilde{u}^a\|_2^2+\|\bar{u}\partial_x \tilde{v}^a +\bar{v}\partial_y \tilde{v}^a\|_2^2\\[5pt]
=&\int_0^1\int_0^{2\pi}\frac{[\hat{u}(\partial_\theta u^a+v^a)+\hat{v}r\partial_r u^a]^2}{r}d\theta dr+\int_0^1\int_0^{2\pi}\frac{[\hat{u}(\partial_\theta v^a-u^a)+\hat{v}r\partial_r v^a]^2}{r}d\theta dr\\
\leq& C\int_0^1\int_0^{2\pi}r[\hat{u}^2(\theta,r)+\hat{v}^2(\theta,r)]d\theta dr +C\int_0^1\int_0^{2\pi}\frac{r\hat{v}^2(\theta,r)}{(r-1)^2}d\theta dr.
\end{align*}
Obviously, there holds
\begin{align*}
\int_0^1\int_0^{2\pi}r[\hat{u}^2(\theta,r)+\hat{v}^2(\theta,r)]d\theta dr=\|(\bar{u},\bar{v})\|_2^2.
\end{align*}
By the Hardy inequality (\ref{weight Hardy}), there holds
\begin{align*}
\int_0^1\int_0^{2\pi}\frac{r\hat{v}^2(\theta,r)}{(r-1)^2}d\theta dr\leq C\int_0^1\int_0^{2\pi}r(\partial_r\hat{v})^2d\theta dr\leq C \|\nabla(\bar{u},\bar{v})\|_2^2.
\end{align*}
Hence, we have
\begin{align}\label{Stokes estimates}
\varepsilon^2\|\nabla^2(\bar{u},\bar{v})\|_2\leq
 C\|( \tilde{R}^a_u, \tilde{R}^a_v)\|_2+ C\|(\tilde{u},\tilde{v})\|_\infty \|\nabla(\tilde{u},\tilde{v})\|_2+C \|\nabla(\bar{u},\bar{v})\|_2.
\end{align}

 Set
\begin{align*}
\|(u,v)\|_Y:=\varepsilon\|\nabla(u,v)\|_2+\varepsilon^3\|\nabla^2(u,v)\|_2.
\end{align*}
 By (\ref{linear stability estimate}) and (\ref{Stokes estimates}), we deduce that for any $\varepsilon\in (0,\varepsilon_0), \eta\in(0,\eta_0)$, there holds
\begin{align}\label{inductive estimate}
\|(\bar{u},\bar{v})\|_Y
\leq C\varepsilon^{-1}\|( \tilde{R}^a_u, \tilde{R}^a_v)\|_2+ C\varepsilon^{-1}\|(\tilde{u},\tilde{v})\|_\infty \|\nabla(\tilde{u},\tilde{v})\|_2.
\end{align}
Moreover, by the Sobolev embedding, we have
\begin{align*}
\|(\tilde{u},\tilde{v})\|_\infty\leq C \|\nabla(\tilde{u},\tilde{v})\|_2^{\frac34}\|\nabla^2(\tilde{u},\tilde{v})\|_2^{\frac14}.
\end{align*}
Thus, we obtain
\begin{align}\label{iterative estimate}
\|(\bar{u},\bar{v})\|_Y\leq C\varepsilon^{-1}\|( \tilde{R}^a_u, \tilde{R}^a_v)\|_2+C\varepsilon^{-\frac{7}{2}}\|(\tilde{u},\tilde{v})\|_Y^2.
\end{align}
Let $Y=\{(u,v)\in C^\infty: (u,v) \ satisfes \ (\ref{iterative conditon})\ and \ \|(u,v)\|_Y<+\infty\}$.
Thus, due to
\begin{align*}
\|(\tilde{R}_u^a, \tilde{R}_v^a)\|_2\leq C \varepsilon^5,
\end{align*}
there exist $\varepsilon_0>0$ such that for any $\varepsilon\in (0,\varepsilon_0)$, the operator
\begin{align*}
(\tilde{u},\tilde{v})\mapsto (\bar{u},\bar{v})
\end{align*}
maps the ball $\{(u,v): \|(u,v)\|_Y\leq 2C^2\varepsilon^4\}$  in $Y$ into itself. Moreover, for every two pairs
$(\tilde{u}_1, \tilde{v}_1)$ and $(\tilde{u}_2, \tilde{v}_2)$ in this ball, we have
\begin{align}\label{contraction estimate}
\|(\bar{u}_1-\bar{u}_2, \bar{v}_1-\bar{v}_2)\|_Y\leq C\varepsilon^{-\frac{7}{2}}(\|(\tilde{u}_1,\tilde{v}_1)\|_Y+\|(\tilde{u}_2,\tilde{v}_2)\|_Y)\|(\tilde{u}_1-\tilde{u}_2, \tilde{v}_1-\tilde{v}_2)\|_Y.
\end{align}
In fact, set
\begin{align*}
\bar{U}:=\bar{u}_1-\bar{u}_2,\ \bar{V}:=\bar{v}_1-\bar{v}_2, \ \bar{P}=\bar{p}_1-\bar{p}_2,
\end{align*}
then there holds
\begin{align}
\left\{
\begin{array}{lll}
-\varepsilon^2\Delta\bar{U}+\partial_x\bar{P}+
\tilde{u}^a\partial_x\bar{U}+\tilde{v}^a\partial_y \bar{U}+\bar{U}\partial_x \tilde{u}^a+\bar{V}\partial_y \tilde{u}^a=\tilde{R}_U,\\[7pt]
-\varepsilon^2\triangle \bar{V}
+\partial_y\bar{P}+\tilde{u}^a\partial_x\bar{V}+\tilde{v}^a\partial_y \bar{V}+\bar{U}\partial_x \tilde{v}^a+\bar{V}\partial_y \tilde{v}^a=\tilde{R}_V,\\[7pt]
\partial_x \bar{U}+\partial_y\bar{V}=0,  \\[7pt]
 (\bar{U}, \bar{V})|_{\partial B_1}=(0,0),
\end{array}
\right.\nonumber
\end{align}
where
\begin{align*}
\tilde{R}_U=&-\tilde{u}_1\partial_x(\tilde{u}_1-\tilde{u}_2)-\tilde{v}_1\partial_y(\tilde{u}_1-\tilde{u}_2)
-(\tilde{u}_1-\tilde{u}_2)\partial_x\tilde{u}_2-(\tilde{v}_1-\tilde{v}_2)\partial_y\tilde{u}_2,\\[5pt]
\tilde{R}_V=&-\tilde{u}_1\partial_x(\tilde{v}_1-\tilde{v}_2)-\tilde{v}_1\partial_y(\tilde{v}_1-\tilde{v}_2)
-(\tilde{u}_1-\tilde{u}_2)\partial_x\tilde{v}_2-(\tilde{v}_1-\tilde{v}_2)\partial_y\tilde{v}_2.
\end{align*}
Thus, following the estimate (\ref{iterative estimate}) line by line, we obtain (\ref{contraction estimate}).
Hence, there exist $\varepsilon_0>0,\eta_0>0$ such that for any $\varepsilon\in (0,\varepsilon_0), \eta\in (0,\eta_0)$, the operator
\begin{align*}
(u,v)\mapsto (\bar{u},\bar{v})
\end{align*}
maps the ball $\{(u,v): \|(u,v)\|_Y\leq 2C^2\varepsilon^4\}$ in $Y$ into itself and is a contraction mapping. Thus, for any $\varepsilon\in (0,\varepsilon_0), \eta\in (0,\eta_0)$, the error equations (\ref{error equation in Euler coordinates}) have a unique solution $(\tilde{u},\tilde{v})$ which satisfies
\begin{align*}
\|(\tilde{u},\tilde{v})\|_{Y}\leq C\varepsilon^4.
\end{align*}
Hence, there holds $\|(\tilde{u},\tilde{v})\|_\infty\leq C\varepsilon.$  Notice that
\begin{align*}
u(\theta,r)=
\left(\begin{array}{c}
\tilde{u}(x,y)\\[5pt]
\tilde{v}(x,y)
\end{array}
\right)\cdot \vec{e}_\theta, \ \
v(\theta,r)=
\left(\begin{array}{c}
\tilde{u}(x,y)\\[5pt]
\tilde{v}(x,y)
\end{array}
\right)\cdot \vec{e}_r,
\end{align*}
we deduce $\|(u,v)\|_\infty\leq C\varepsilon$. This complete the proof of this proposition.
\end{proof}

\section{Proof of Theorem \ref{main theorem}}
Finally, we give the proof of Theorem \ref{main theorem}.
\begin{proof}
Combining the Proposition \ref{existence and error estimate of error equation} and the approximate solution (\ref{approximate solution}) of the Navier-Stokes equations (\ref{NS-curvilnear}), we easily obtain Theorem \ref{main theorem}.
\end{proof}

\section{Appendix}

{\bf Appendix A:  Constant coefficient periodic PDE}

In this appendix we give a brief argument to solve the following problem
\begin{eqnarray}\label{periodic heat equation}
\left \{
\begin {array}{ll}
(Q_0)_\theta=u_{e}(1)(Q_0)_{\psi\psi},\\[5pt]
Q_0(\theta,\psi)=Q_0(\theta+2\pi,\psi),\\[5pt]
Q_0\big|_{\psi=0}=g(\theta),\ \ Q_0\big|_{\psi\rightarrow-\infty}=0,\label{Q0-0}
\end{array}
\right.
\end{eqnarray}
where
\begin{align*}
g(\theta)=\alpha^2+2\alpha\eta f(\theta)+\eta^2 f^2(\theta)-u_{e}^2(1)=2\alpha\eta f(\theta)+\eta^2 f^2(\theta)-\frac{\eta^2}{2\pi} \int_0^{2\pi}f^2(\theta)d\theta.
\end{align*}

Let  $Q_0(\theta,\psi)=\sum\limits_{k\in\mathbb{Z}}e^{ik\theta}Q_{0k}(\psi)$  and substitute it into (\ref{periodic heat equation}), we obtain
\begin{eqnarray*}
\left \{
\begin {array}{ll}
ikQ_{0k}=u_{e}(1)Q_{0k}'',\\[5pt]
Q_{0k}\big|_{\psi=0}=\widehat{g}(k)=\frac{1}{2\pi}\int_0^{2\pi}e^{-ik\theta}g(\theta) d\theta,
\\[5pt]
Q_{0k}\big|_{\psi\rightarrow-\infty}=0.
\end{array}
\right.
\end{eqnarray*}

It is easy to get
\begin{align*}
Q_{0k}(\psi)=\widehat{g}(k)e^{\alpha_k\psi}
\end{align*}
with
$\alpha_k=\sqrt{\frac{|k|}{2u_e(1)}}(1+\text{sgn}k\cdot i)$. Then
\begin{align*}
Q_0(\theta,\psi)=\sum\limits_{k\in\mathbb{Z}}e^{ik\theta}\widehat{g}(k)e^{\alpha_k\psi}\in X
\end{align*}
and
\begin{align}
\|Q_0\|_X\leq C\eta.\label{norm of q0}
\end{align}

{\bf Appendix B: Construction of corrector $h(\theta,r)$}

In this section, we give a construction of corrector $h(\theta,r)$. Firstly, we give a simple lemma.

\begin{Lemma}\label{corector equation}
Assume that $K(\theta,r)$ is a $2\pi$-periodic smooth function which satisfies
\beno
\int_0^{2\pi}K(\theta,r)d\theta=0, \ \forall r\in (0,1]; \ K(\theta, 1)=0,
\eeno
then there exists a $2\pi$-periodic function $h(\theta,r)$ such that
\begin{align}\label{corrector $h$}
&\partial_\theta h(\theta,r)=K(\theta,r); \quad h(\theta, 1)=0;\nonumber\\
&\int_0^{2\pi}h(\theta,r)d\theta=0, \quad \|\partial_\theta^j\partial_r^kh\|_2\leq C\|\partial_\theta^j\partial_r^kK\|_2.
\end{align}
\end{Lemma}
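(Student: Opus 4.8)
The plan is to prove Lemma~\ref{corector equation} by writing down $h$ explicitly as an antiderivative in $\theta$ of $K$ and then correcting it by a function of $r$ alone so that the zero-average normalization holds. First I would set
\[
\tilde h(\theta,r):=\int_0^\theta K(\vartheta,r)\,d\vartheta.
\]
Since $K$ is $2\pi$-periodic in $\theta$ and $\int_0^{2\pi}K(\vartheta,r)\,d\vartheta=0$ for every $r\in(0,1]$, the function $\tilde h$ is again $2\pi$-periodic in $\theta$ and smooth, with $\partial_\theta\tilde h=K$. It need not have zero average, so I would subtract its $\theta$-mean: put
\[
h(\theta,r):=\tilde h(\theta,r)-\frac{1}{2\pi}\int_0^{2\pi}\tilde h(\vartheta,r)\,d\vartheta.
\]
Then $\partial_\theta h=\partial_\theta\tilde h=K$, $\int_0^{2\pi}h(\theta,r)\,d\theta=0$ for all $r$, and $h$ is $2\pi$-periodic and smooth on $\Omega$.

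Next I would check the boundary condition $h(\theta,1)=0$. Because $K(\theta,1)\equiv 0$, the inner integral gives $\tilde h(\theta,1)=\int_0^\theta K(\vartheta,1)\,d\vartheta=0$ for every $\theta$, hence also its $\theta$-mean vanishes, and therefore $h(\theta,1)=0$. This is where the hypothesis $K(\theta,1)=0$ is used in an essential way.

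Finally, for the derivative estimates I would differentiate the formula for $h$ directly. For any $j,k\ge 0$, $\partial_\theta^j\partial_r^k h(\theta,r)=\partial_\theta^{j-1}\partial_r^k K(\theta,r)$ when $j\ge 1$ (after discarding the $\theta$-mean term, which contributes nothing once at least one $\theta$-derivative is taken), so those cases are immediate; for $j=0$ one has $\partial_r^k h(\theta,r)=\int_0^\theta \partial_r^k K(\vartheta,r)\,d\vartheta-\frac{1}{2\pi}\int_0^{2\pi}\int_0^\vartheta \partial_r^k K(\cdot,r)$, and applying the Cauchy--Schwarz inequality in $\vartheta\in[0,2\pi]$ pointwise in $r$ and then integrating $r\,dr$ over $(0,1)$ (or $dr$, as the ambient $L^2$ norm is taken) yields $\|\partial_r^k h\|_2\le C\|\partial_r^k K\|_2$ with an absolute constant $C$ depending only on the length $2\pi$ of the $\theta$-interval. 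Combining the two cases gives $\|\partial_\theta^j\partial_r^k h\|_2\le C\|\partial_\theta^j\partial_r^k K\|_2$ as claimed. I do not anticipate a genuine obstacle here; the only mild point of care is bookkeeping the weight in the $L^2$ norm and confirming that the subtracted mean term is harmless, which it is since $\|\cdot\|_2$ of a $\theta$-independent function is controlled by the corresponding norm of the function before averaging via Jensen/Cauchy--Schwarz.
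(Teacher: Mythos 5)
Your construction is correct and produces exactly the same function as the paper: the unique $2\pi$-periodic $\theta$-primitive of $K$ with zero $\theta$-mean. The difference is only in presentation. The paper expands $K(\theta,r)=\sum_{n\neq 0}K_n(r)e^{in\theta}$ and sets $h=\sum_{n\neq 0}\frac{K_n(r)}{in}e^{in\theta}$, which makes every claim a one-liner: periodicity and zero mean are built in, $h(\theta,1)=0$ follows from $K_n(1)=0$, and the estimate $\|\partial_\theta^j\partial_r^kh\|_2\leq\|\partial_\theta^j\partial_r^kK\|_2$ is immediate from Parseval since the Fourier multiplier $1/(in)$ has modulus at most $1$ on the nonzero frequencies. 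Your direct-integral route reaches the same endpoint, but one step deserves to be made explicit: for $j\geq 1$ you have $\partial_\theta^j\partial_r^kh=\partial_\theta^{j-1}\partial_r^kK$, and the claimed bound $\|\partial_\theta^{j-1}\partial_r^kK\|_2\leq C\|\partial_\theta^j\partial_r^kK\|_2$ is not literally "immediate" --- it is a Poincar\'e inequality on the circle, valid because $\partial_\theta^{j-1}\partial_r^kK$ has zero $\theta$-average (for $j-1=0$ this is the hypothesis on $K$; for $j-1\geq1$ it holds for any $\theta$-derivative of a periodic function). With that remark inserted, and with your Cauchy--Schwarz argument for $j=0$, the proof is complete; the weight ($dr$ versus $r\,dr$) is indeed harmless since all the $\theta$-estimates are pointwise in $r$.
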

\begin{proof}
Let
\beno
K(\theta,r)=\sum_{n\neq 0}K_n(r)e^{in\theta}, \ \ K_n(1)=0.
\eeno
Set
\beno
h(\theta,r)=\sum_{n\neq 0}\frac{K_n(r)}{in}e^{in\theta}.
\eeno
It's easy to justify that $h(\theta,r)$ satisfies (\ref{corrector $h$}) which completes the proof.
\end{proof}

Next, we construct the corrector $h(\theta,r)$ by the above lemma.
Direct computation gives
\begin{align*}
u^a_\theta+rv^a_r+v^a=&\varepsilon^5\partial_\theta h(\theta,r)+K(\theta,r),
\end{align*}
where
\begin{align*}
K(\theta,r)=&\varepsilon^5\chi(r)[Y\partial_Yv_p^{(5)}(\theta,Y)+v_p^{(5)}(\theta,Y)]
+r\chi'(r)\Big(\sum_{i=1}^5\varepsilon^i v_p^{(i)}(\theta,Y)\Big).
\end{align*}
Notice that $\chi'(r)=0,\ r\in [0,\frac12]\cup [\frac34,1]$ and the property of $v_p^{(i)}$, we deduce that $K(\theta,r)=O(\varepsilon^5)$ and
\begin{align*}
K(\theta,1)=0.
\end{align*}

Moreover, notice that
\begin{align*}
\int_0^{2\pi}v_p^{i}(\theta,Y)d\theta=0, \ \forall \ Y\leq 0,\quad i=1,\cdot\cdot\cdot,5,
\end{align*}
we deduce that
\begin{align*}
\int_0^{2\pi}K(\theta,r)d\theta=0, \ \forall r\in (0,1].
\end{align*}
Thus, we can choose $h(\theta,r)$ by Lemma \ref{corector equation} such that
\begin{align*}
\varepsilon^5\partial_\theta h(\theta,r)+K(\theta,r)=0, \ h(\theta, 1)=0, \ \|\partial_\theta^j\partial_r^kh\|_2\leq C\varepsilon^{-k}.\\
\end{align*}

{\bf Appendix C:  Prandtl-Batchlor theory in disk.}

For the convenience of readers, we give an introduction of the Prandtl-Batchlor theory, one can see \cite{B, K1998, K2000, OK, W} for more details.

\begin{Theorem}
We consider the steady Navier-Stokes equations in two dimensional simply-connected domain $\Omega$
\begin{eqnarray}\label{NSE in simply-connected domain}
\left \{
\begin {array}{ll}
\mathbf{u}^\varepsilon\cdot\nabla\mathbf{u}^\varepsilon+\nabla p^\varepsilon-\varepsilon^2\Delta\mathbf{u}^\varepsilon=0,\\[5pt]
\nabla\cdot \mathbf{u}^\varepsilon=0,\\[5pt]
\mathbf{u}^\varepsilon\cdot \mathbf{n}\big|_{\partial \Omega}=0, \ \ \mathbf{u}^\varepsilon\cdot \mathbf{t}\big|_{\partial \Omega}=g,
\end{array}
\right.
\end{eqnarray}
where $\mathbf{n}$ is the unit normal vector to $\partial \Omega$ and $\mathbf{t}$ is the unit tangential vector to $\partial \Omega$, $g$ is a smooth function. We assume that (i)the stream function $\psi^\varepsilon$ of equation (\ref{NSE in simply-connected domain}) has no hyperbolic critical point(i.e. nested closed streamlines and single eddy);
(ii)for any $\Omega_1\subset\subset \Omega$, there exist $\varepsilon_0>0$ such that for any $0<\varepsilon<\varepsilon_0$, $\Omega_1$ is away from the boundary layer of equation (\ref{NSE in simply-connected domain}) and $\mathbf{u}^\varepsilon\rightarrow \mathbf{u}^e$ in $C^2(\Omega_1)$, where $\mathbf{u}^e$ is a solution of steady Euler equations in $\Omega$. Then the vorticity $w^e=\nabla\times \mathbf{u}^e$ is a constant in $\Omega$.
\end{Theorem}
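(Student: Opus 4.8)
The plan is to exploit the classical Prandtl–Batchelor argument: integrate the steady vorticity transport equation over the region enclosed by a closed streamline and let $\varepsilon\to0$. Write the vorticity equation obtained by taking the curl of $(\ref{NSE in simply-connected domain})$, namely $\mathbf{u}^\varepsilon\cdot\nabla w^\varepsilon=\varepsilon^2\Delta w^\varepsilon$ with $w^\varepsilon=\nabla\times\mathbf{u}^\varepsilon$. Fix a value $c$ in the interior of the range of the limiting stream function $\psi^e$ and let $\Omega_c=\{\psi^e<c\}$ (or $>c$, according to orientation); by assumption (i) this is a smooth domain bounded by a single closed streamline $\Gamma_c=\{\psi^e=c\}$ that, by assumption (ii), lies in a compact subset of $\Omega$ away from the boundary layer. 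On the approximating Navier–Stokes level, integrate $\mathbf{u}^\varepsilon\cdot\nabla w^\varepsilon=\varepsilon^2\Delta w^\varepsilon$ over $\Omega_c$. Using $\nabla\cdot\mathbf{u}^\varepsilon=0$ the left side is $\oint_{\partial\Omega_c}w^\varepsilon(\mathbf{u}^\varepsilon\cdot\mathbf n)\,ds$, and the right side is $\varepsilon^2\oint_{\partial\Omega_c}\partial_n w^\varepsilon\,ds$.

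First I would pass to the limit $\varepsilon\to0$ in the identity $\oint_{\partial\Omega_c}w^\varepsilon(\mathbf u^\varepsilon\cdot\mathbf n)\,ds=\varepsilon^2\oint_{\partial\Omega_c}\partial_n w^\varepsilon\,ds$, using the $C^2(\Omega_1)$ convergence $\mathbf u^\varepsilon\to\mathbf u^e$ on the compact set $\overline{\Omega_c}$. The right-hand side tends to $0$ like $\varepsilon^2$ times a bounded quantity, so $\oint_{\partial\Omega_c}w^e(\mathbf u^e\cdot\mathbf n)\,ds=0$; but $\partial\Omega_c=\Gamma_c$ is a streamline of $\mathbf u^e$, so $\mathbf u^e\cdot\mathbf n\equiv0$ there and this identity is automatically satisfied, giving no information. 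The correct route, as in Batchelor, is instead to not integrate over the exact region but to work in a thin annular shell between two nearby streamlines $\Gamma_{c_1}$ and $\Gamma_{c_2}$, or equivalently to use the co-area formula: rewrite $\varepsilon^2\Delta w^\varepsilon=\mathbf u^\varepsilon\cdot\nabla w^\varepsilon$ and integrate over $\{\psi^e<c\}$ the quantity $\Delta w^\varepsilon$, then integrate the resulting function of $c$ and use that in the limit $w^e$ is transported by $\mathbf u^e$, hence constant along each streamline, i.e. $w^e=W(\psi^e)$ for some function $W$. The shell identity becomes, in the limit, $\dfrac{d}{dc}\!\left(W'(c)\oint_{\Gamma_c}|\nabla\psi^e|\,ds\right)=0$ after dividing by the appropriate factor; more precisely one obtains that $W'(c)\oint_{\Gamma_c}|\nabla\psi^e|^{?}\,ds$ is independent of $c$, forcing $W'\equiv0$ because the circulation-type integral $\oint_{\Gamma_c}|\nabla\psi^e|\,ds$ stays bounded away from $0$ and $\infty$ on compact subsets while, were $W'\neq0$, one could derive a contradiction at the critical streamline or by boundedness of $w^e$. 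Thus $W$ is constant and $w^e\equiv\text{const}$ in $\Omega$.

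More concretely, the key steps in order are: (1) derive the vorticity equation $\mathbf u^\varepsilon\cdot\nabla w^\varepsilon=\varepsilon^2\Delta w^\varepsilon$ and note $w^e=W(\psi^e)$ in the limit from the transport structure $\mathbf u^e\cdot\nabla w^e=0$ together with the no-hyperbolic-critical-point hypothesis (so that level sets of $\psi^e$ are closed curves foliating $\Omega$ minus the single elliptic center); (2) for $c_1<c_2$ in the interior of the range, integrate $\varepsilon^2\Delta w^\varepsilon=\nabla\cdot(\mathbf u^\varepsilon w^\varepsilon)$ over the shell $\{c_1<\psi^e<c_2\}$, apply the divergence theorem, and observe the flux of $\mathbf u^\varepsilon w^\varepsilon$ across $\{\psi^e=c_i\}$ is $w^\varepsilon$ times the flux of $\mathbf u^\varepsilon$, which tends to $0$ since $\{\psi^e=c_i\}$ is a limiting streamline; (3) conclude $\lim_{\varepsilon\to0}\varepsilon^2\int_{\{\psi^e=c_i\}}\partial_n w^\varepsilon\,ds$ exists and, comparing the two boundary components, that $\int_{\{\psi^e=c\}}\partial_n w^e\,ds$ — interpreted via the co-area formula as $\frac{d}{dc}\int_{\{\psi^e<c\}}\Delta w^e$, which equals $0$ because the shell flux of the Euler field vanishes — is independent of $c$; (4) using $w^e=W(\psi^e)$, rewrite $\int_{\{\psi^e=c\}}\partial_n w^e\,ds=W'(c)\int_{\{\psi^e=c\}}|\nabla\psi^e|\,ds$ (up to sign/normalization) and deduce $W'(c)\cdot(\text{positive bounded factor})=\text{const}$; (5) let $c$ approach the value at the elliptic center, where the length-weighted factor $\int_{\{\psi^e=c\}}|\nabla\psi^e|\,ds\to0$, to force the constant to be $0$, hence $W'\equiv0$ and $w^e$ constant.

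The main obstacle is step (3)–(4): making rigorous the passage from the $O(\varepsilon^2)$ viscous flux term to a statement purely about the Euler limit. One must show $\varepsilon^2\oint_{\Gamma_c}\partial_n w^\varepsilon\,ds\to0$, which needs control of $\nabla w^\varepsilon=\nabla(\nabla\times\mathbf u^\varepsilon)$ — a second derivative of velocity — on the compact set, and the hypothesis only gives $C^2$ convergence of $\mathbf u^\varepsilon$, i.e. control of $\nabla w^\varepsilon$ but not uniform bounds independent of $\varepsilon$; in fact $C^2(\Omega_1)$ convergence does give a uniform $C^2$ bound on each fixed $\Omega_1$, so $\oint_{\Gamma_c}\partial_n w^\varepsilon\,ds$ is bounded uniformly in $\varepsilon$, and multiplying by $\varepsilon^2$ kills it — so the obstacle is really bookkeeping: identifying the correct shell identity whose limit is nontrivial, i.e. dividing by $c_2-c_1$ before taking $\varepsilon\to0$ and then $c_2\to c_1$, and justifying the interchange of these limits together with the co-area change of variables on the family $\{\psi^e=c\}$, which requires $|\nabla\psi^e|>0$ on each regular level set (guaranteed by the no-hyperbolic-critical-point assumption away from the center). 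Once the iterated limit is set up correctly, the conclusion $W'\equiv0$ follows as above.
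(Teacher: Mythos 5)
Your first computation already contains the seed of the correct argument, but you abandon it at the wrong moment and your replacement does not close the gap. The paper's proof integrates the vorticity equation over the region bounded by a closed streamline of the \emph{Navier--Stokes} flow itself, $\{\psi^\varepsilon=c\}$, not of the Euler limit. On such a curve $\mathbf{u}^\varepsilon\cdot\mathbf{n}\equiv 0$ exactly, so the convective flux $\oint w^\varepsilon(\mathbf{u}^\varepsilon\cdot\mathbf{n})\,dl$ vanishes identically for every $\varepsilon>0$, and the divergence theorem leaves $\varepsilon^2\oint_{\{\psi^\varepsilon=c\}}\partial_n w^\varepsilon\,dl=0$. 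Dividing by $\varepsilon^2$ yields the \emph{exact} identity $\oint_{\{\psi^\varepsilon=c\}}\partial_n w^\varepsilon\,dl=0$ for each fixed $\varepsilon$; since $w^\varepsilon\to w^e$ in $C^1$ and the level sets of $\psi^\varepsilon$ converge to those of $\psi^e$ (the paper organizes this via action--angle coordinates, which also give $w^e=F(\psi^e)$), the identity passes to the limit and becomes $F'(c)\oint_{\{\psi^e=c\}}|\nabla\psi^e|\,dl=0$, hence $F'\equiv 0$.

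Your route keeps the level sets of $\psi^e$ throughout, and there the convective flux is not exactly zero for finite $\varepsilon$ --- it only tends to $0$, with no rate, because the hypothesis gives $C^2$ convergence of $\mathbf{u}^\varepsilon$ without any quantitative bound on $\mathbf{u}^\varepsilon\cdot\mathbf{n}$ along $\{\psi^e=c\}$. You therefore cannot divide by $\varepsilon^2$, and without dividing, both sides of your shell identity tend to $0$: the viscous term is $\varepsilon^2$ times a quantity that is uniformly bounded (by the assumed $C^2$ convergence), and the convective term tends to $0$ as well. Dividing by $c_2-c_1$ does not repair this; you obtain $0=0$ in the limit in either order of limits. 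The assertion in your step (3) that $\oint_{\{\psi^e=c\}}\partial_n w^e\,ds$ vanishes ``because the shell flux of the Euler field vanishes'' is a non sequitur: the vanishing of $\oint w^e\,\mathbf{u}^e\cdot\mathbf{n}\,dl$ on an Euler streamline is automatic and carries no information about $\partial_n w^e$. The missing idea is precisely the exact cancellation on Navier--Stokes streamlines, which lets one remove the factor $\varepsilon^2$ \emph{before} taking any limit. Your steps (1) and (4) and the positivity of $\oint|\nabla\psi^e|\,dl$ on regular level sets are fine; once $\oint_{\{\psi^e=c\}}\partial_n w^e\,dl=0$ is in hand, the conclusion $W'(c)=0$ follows for each $c$ separately, with no need for your limiting argument at the elliptic center.
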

\begin{proof}
Let $\mathbf{u}^\varepsilon=(u^\varepsilon,v^\varepsilon)$ and $w^\varepsilon=\partial_yu^\varepsilon-\partial_xv^\varepsilon$ be the vorticity, then it's easy to obtain that
\begin{align*}
\mathbf{u}^\varepsilon\cdot\nabla w^\varepsilon-\varepsilon\Delta w^\varepsilon=0.
\end{align*}
 The boundary is taken to be defined by $\psi^\varepsilon=0$ and $0<\psi^\varepsilon<c_1$ throughout the interior of the eddy. For any $0<c<c_1$, integrating the Navier-Stokes equations over the domain which is surrounded by the closed streamline $\{(x,y)|\psi^\varepsilon(x,y)=c\}$ and using the divergence theorem,
we obtain
\begin{align}\label{necessary condition of vorticity}
\int_{\{\psi^\varepsilon=c\}}\frac{\partial w_\varepsilon}{\partial n}dl=0, \ \ \forall c\in (0,c_1).
\end{align}
Moreover, due to $\mathbf{u}^\varepsilon\rightarrow \mathbf{u}^e$ in $C^2$, then there holds
$w^\varepsilon\rightarrow w^e$ in $C^1$.

Let $\psi^e$ be the associated stream function of Euler equations, then $w^e=F(\psi^e).$  In fact, we introduce
the action-angle transform $(x,y)\rightarrow(r^{\varepsilon},\theta^{\varepsilon})$, where
\begin{align}
r^{\varepsilon}=\psi^{\varepsilon}(x,y)  ,\quad \ \frac{2\pi
}{v^{\varepsilon}(r)}=\oint_{\left\{  \psi^{\varepsilon
}=r^{\varepsilon}\right\}  }\frac{1}{\left\vert \nabla\psi^{\varepsilon
}\right\vert },\quad \theta^{\varepsilon}=v^{\varepsilon}(r)
\int_{0}^{l}\frac{dl^{\prime}}{\left\vert \nabla\psi^{\varepsilon}\right\vert
},\nonumber
\end{align}
and $l$ is the arc-length variable on the curve $\left\{  \psi^{\varepsilon
}=r^{\varepsilon}\right\}  $. Then the transformation $(x,y)
\rightarrow(r^{\varepsilon},\theta^{\varepsilon})$ has Jacobian
$1$ and the operator $u^{\varepsilon}\cdot\nabla$ becomes $v^{\varepsilon
}(r)\partial_{\theta^{\varepsilon}}$. In $\Omega_{1}$, $(
r^{\varepsilon},\theta^{\varepsilon})  \rightarrow(  r^{0}%
,\theta^{0})$ in $C^{1}$ and $r^{0}=\psi^{e}$. Then $(\psi
^{e},\theta^{0})  $ is a coordinate system and the steady vorticity
$\omega^{e}$ is a single-valued function $F(\psi^{e})$.

Taking $\varepsilon\rightarrow 0$ in (\ref{necessary condition of vorticity}), we obtain
\begin{align*}
F'(c)=0, \ \forall c\in (0,c_1).
\end{align*}
Thus, $w_e$ is a constant in $\Omega$.
\end{proof}

\begin{Remark}
In the disk $B_1(0)$: due to $\vec{u}_e=u_e(\theta,r)\vec{e}_\theta+v_e(\theta,r)\vec{e}_r$, we deduce
\begin{align*}
w_e=\frac{1}{r}(\partial_r(ru_e)-\partial_\theta v_e)=a, \quad v_e|_{\partial\Omega}=0.
\end{align*}
Solving this equation, we obtain
$$(u_e,v_e)=\Big(\frac{a}{2}r, 0\Big),$$
where $a$ is any constant.
\end{Remark}

\section*{Acknowledgments}

M.Fei is partially supported by NSF of China under Grant No.11871075 and 11971357. Z.Lin is partially supported by the NSF grants DMS-1715201 and DMS-2007457. T.Tao is partially supported by the NSF of China under Grant 11901349 and the NSF of Shandong Province grant ZR2019QA001. C.Gao and T.Tao are deeply grateful to professor L.Zhang for very valuable suggestion.

\end{document}